\documentclass{article}

\usepackage{microtype}
\usepackage{graphicx}
\usepackage{subcaption}
\usepackage{booktabs} 

\usepackage{hyperref}

\usepackage[preprint]{icml2026}

\usepackage{amsmath}
\usepackage{amssymb}
\usepackage{mathtools}
\usepackage{amsthm}
\usepackage{multirow, hhline}
\usepackage{pifont}
\newcommand{\cmark}{\ding{51}}%
\newcommand{\xmark}{\ding{55}}%

\usepackage[capitalize,noabbrev]{cleveref}
\usepackage{amsmath}\allowdisplaybreaks
\usepackage{amsfonts,bm}

\def\1{\bf{1}}

\def\va{{\bf{a}}}

\def\fD{{\mathcal{D}}}

\def\fL{{\mathcal{L}}}

\DeclareMathOperator*{\argmax}{arg\,max}
\DeclareMathOperator*{\argmin}{arg\,min}

\usepackage{amsthm}
\usepackage{etoolbox}
\theoremstyle{plain}

\makeatletter
\def\Ddots{\mathinner{\mkern1mu\raise\p@
\vbox{\kern7\p@\hbox{.}}\mkern2mu
\raise4\p@\hbox{.}\mkern2mu\raise7\p@\hbox{.}\mkern1mu}}
\makeatother

\makeatletter
\newcommand*{\rom}[1]{\expandafter\@slowromancap\romannumeral #1@}
\makeatother

\newtheorem{theorem}{Theorem}
\newtheorem{proposition}[theorem]{Proposition}
\newtheorem{lemma}[theorem]{Lemma}
\newtheorem{corollary}[theorem]{Corollary}
\theoremstyle{definition}
\newtheorem{definition}[theorem]{Definition}
\newtheorem{assumption}[theorem]{Assumption}
\theoremstyle{remark}
\newtheorem{remark}[theorem]{Remark}

\def\va {{{\bf a}}}

\def\A{{\bf A}}

\def\C{{\bf C}}

\def\g{{\bf g}}

\def\H{{\bf H}}
\def\I{{\bf I}}

\def\T{{\bf T}}

\def\X{{\bf X}}
\def\x{{\bf x}}

\def\y{{\bf y}}
\def\Z{{\bf Z}}

\def\0{{\bf 0}}
\def\1{{\bf 1}}

\def\OM{{\mathcal O}}

\def\RB{{\mathbb R}}

\def \fx{{\boldsymbol{x}}}
\def \fy{{\boldsymbol{y}}}
\def \fw{{\boldsymbol{w}}}
\def \fs{{\boldsymbol{s}}}
\def \fz{{\boldsymbol{z}}}

\usepackage{enumitem}
\usepackage{hhline}

\makeatletter
\let\c@theorem\relax    
\theoremstyle{plain}
\newtheorem{theorem}{Theorem}[section]
\newtheorem{proposition}[theorem]{Proposition}
\newtheorem{lemma}[theorem]{Lemma}
\newtheorem{corollary}[theorem]{Corollary}
\theoremstyle{definition}
\newtheorem{definition}[theorem]{Definition}
\newtheorem{assumption}[theorem]{Assumption}
\theoremstyle{remark}
\newtheorem{remark}[theorem]{Remark}

\usepackage[textsize=tiny]{todonotes}

\icmltitlerunning{Second-Order Bilevel Optimization}

\begin{document}

\twocolumn[
  \icmltitle{Second-Order Bilevel Optimization with Accelerated Convergence Rates}



  \icmlsetsymbol{equal}{*}

  \begin{icmlauthorlist}
    \icmlauthor{Sheng Yang}{equal,ucr}
    \icmlauthor{Chengchang Liu}{equal,cuhk}
    \icmlauthor{Lesi Chen}{tsinghua}
    \icmlauthor{John C.S. Lui}{cuhk}

  \end{icmlauthorlist}

  \icmlaffiliation{ucr}{Department of Statistics, University of California, Riverside}
  \icmlaffiliation{cuhk}{Department of Computer Science \& Engineering, The Chinese University of Hong Kong}
  \icmlaffiliation{tsinghua}{IIIS, Tsinghua University}
  \icmlcorrespondingauthor{Chengchang Liu}{7liuchengchang@gmail.com}

  \icmlkeywords{Machine Learning, ICML}

  \vskip 0.3in
]



\printAffiliationsAndNotice{}  

\begin{abstract}
This paper studies second-order methods for nonconvex-strongly-convex bilevel optimization. We propose a novel fully second-order bilevel approximation method (FSBA) that achieves an iteration complexity of $\tilde{\mathcal{O}}(\epsilon^{-1.5})$ for finding the $(\epsilon, \OM(\sqrt{\epsilon}))$ second-order stationary point of the hyper-objective function.
Our results demonstrate that second-order methods can achieve an accelerated convergence rate than first-order methods in bilevel optimization.
To address the heavy computational cost associated with the second-order oracle, we introduce a lazy variant of FSBA, called LFSBA, which reuses second-order information across several iterations. We prove that LFSBA exhibits better computational complexity than FSBA by a factor of $\sqrt{d}$, where $d$ is the dimension of the problem. 
We also apply a similar idea to nonconvex strongly-concave minimax optimization and propose the lazy minimax cubic-regularized Newton (LMCN) method with better computational complexity compared to existing second-order methods.
\end{abstract}

\section{Introduction}
In this paper, we consider the following bilevel optimization problem:
\begin{equation}
\label{eq:bilevel}
\begin{split}
    \min _{\boldsymbol{x} \in \mathbb{R}^{d_x}} \varphi(\boldsymbol{x}) &:= f\left(\boldsymbol{x}, \boldsymbol{y}^*(\boldsymbol{x})\right), \\
    \text{where} \quad \boldsymbol{y}^*(\boldsymbol{x}) &:= \argmin _{\boldsymbol{y} \in \mathbb{R}^{d_y}} g(\boldsymbol{x}, \boldsymbol{y}).
\end{split}
\end{equation}
We assume that the lower-level function $g$ is strongly convex with respect to $\fy$.
This formulation is widely applied in various machine learning applications, including but not limited to hyperparameter tuning \cite{franceschi2018bilevel,pedregosa2016hyperparameter}, neural architecture search \cite{liu2018darts,zhang2021idarts,zoph2016neural}, meta-learning \cite{ji2022theoretical, rajeswaran2019meta}, reinforcement learning \cite{hong2023two, konda1999actor}, and adversarial training~\cite{bruckner2011stackelberg,goodfellow2020generative,robey2023adversarial,zhang2022revisiting}.

The strong convexity of lower-level function $g$ with respect to $\fy$ and proper smooth assumptions on $f$ and $g$ ensure the differentiability of $\varphi(\x)$, whose gradient can be expressed as:
\begin{align*}
   \! &\nabla\varphi(\fx) = \nabla_x f(\fx,\fy^*(\fx)) \\
    &~~-\nabla_{xy}^2g(\fx,\fy^*(\fx))\big(\nabla^2_{yy}g(\fx,\fy^*(\fx))\big)^{-1}\nabla_y g(\fx,\fy^*(\fx)).
\end{align*}
Previous second-order methods, such as approximate implicit differentiation (AID) \cite{ghadimi2018approximation,ji2021bilevel,liao2018reviving, lorraine2020optimizing} and iterative differentiation (ITD) \cite{arbel2022amortized, bolte2021nonsmooth,domke2012generic,franceschi2017forward,franceschi2018bilevel}, utilize Hessian-vector products \cite{ji2021bilevel,li2022fully} to estimate the hypergradient $\nabla \varphi(\boldsymbol{x})$.
They perform inexact gradient descent or (perturbed) accelerate gradient descent~\cite{yang2023accelerating,wang2024efficient} to minimize $\varphi(\cdot)$.
The iteration complexities of these methods are consistent with those of first-order methods for non-convex minimization problems: $\OM(\epsilon^{-2})$ for the gradient descent type algorithm when $\nabla\varphi(\cdot)$ is Lipschitz continuous, and $\OM(\epsilon^{-1.75})$ for the accelerated gradient descent methods if $\nabla^2\varphi(\cdot)$ is Lipschitz continuous.

\begin{table*}[t] 
\centering
\caption{Comparison of computational complexities for finding an $\epsilon$-stationary point of the hyper-objective $\varphi(\boldsymbol{x}) := f(\boldsymbol{x}, \boldsymbol{y}^*(\boldsymbol{x}))$ under Assumption \ref{assum:basic}. \textbf{Finding SOSP} indicates whether the algorithm can find an approximate second-order stationary point.}
\label{tab:compare-alg}

\small
\renewcommand{\arraystretch}{1.25} 

\begin{tabular}{@{}llccc@{}} 
\toprule
\textbf{Oracle} & \textbf{Method} & \textbf{Iteration Complexity} & \textbf{Hessian Computations} & \textbf{Find SOSP?} \\ 
\midrule
\midrule

\multirow{5}{*}{1st ($f$, $g$)} 
  & PZOBO \cite{sow2022convergence} & $\tilde{\mathcal{O}}(d_x^2 \kappa^6\epsilon^{-4})$~$^{\text{(a)}}$ & - & \xmark \\ 
  & BOME \cite{liu2022bome} & $\tilde{\mathcal{O}}(\text{poly}(\kappa)\epsilon^{-6})$~$^{\text{(b)}}$ & - & \xmark \\ 
  & F$^2$BA \cite{Kown23Fully} & $\tilde{\mathcal{O}}(\kappa^7\epsilon^{-3})$~$^{\text{(c)}}$ & - & \xmark \\ 
  & F$^2$BA \cite{Chen23Nearoptimal} & $\tilde{\mathcal{O}}(\kappa^4\epsilon^{-2})$ & - & \cmark \\
  & RAF$^2$BA \cite{Chen23Nearoptimal,yang2023accelerating} & $\tilde{\mathcal{O}}(\kappa^{3.75}\epsilon^{-1.75})$ & - & \cmark \\
\midrule

\multirow{6}{*}{1st ($f$) + 2nd ($g$)} 
  & AID \cite{ghadimi2018approximation} & $\mathcal{O}(\kappa^5\epsilon^{-2.5})$  & every iteration~$^\dag$ & \xmark \\ 
  & AID-BiO \cite{ji2021bilevel} &  ${\mathcal{O}}(\kappa^4\epsilon^{-2})$  & every iteration~$^\dag$ & \xmark \\ 
  & ITD-BiO \cite{ji2021bilevel} &  $\tilde{\mathcal{O}}(\kappa^4\epsilon^{-2})$  & -$^*$ & \xmark \\ 
  & iNEON \cite{huang2025efficiently} & $\tilde{\mathcal{O}}(\text{poly}(\kappa)\epsilon^{-2})$ & every iteration~$^\dag$ & \cmark \\ 
  & RAHGD \cite{yang2023accelerating} &  $\tilde{\mathcal{O}}(\kappa^{3.25}\epsilon^{-1.75})$ & every iteration~$^\dag$ & \cmark \\
  & IAPUN \cite{wang2024efficient} &  $\tilde{\mathcal{O}}(\kappa^{4.75}\epsilon^{-1.75})$  & every iteration & \cmark \\
\midrule

\multirow{2}{*}{2nd ($f$, $g$)} 
  & FSBA (Alg.~\ref{alg:FSBA}) &  $\tilde{\mathcal{O}}(\kappa^3\epsilon^{-1.5})$  & every iteration & \cmark \\ 
  & LFSBA (Alg.~\ref{alg:LFSBA}) &  $\tilde{\mathcal{O}}(\kappa^3 m^{0.5}\epsilon^{-1.5})$  & once every $m$ iterations & \cmark 
  \\
\bottomrule
\end{tabular}

\vspace{0.5em}
\begin{minipage}{0.95\linewidth} 
    \scriptsize 
    \textbf{Note:}
    (a) Assumes $\|\nabla^2 g(\boldsymbol{x}, \boldsymbol{y}) - \nabla^2 g(\boldsymbol{x}', \boldsymbol{y}')\|_F^2 \leq \rho_g^2 (\|\boldsymbol{x} - \boldsymbol{x}'\|^2 + \|\boldsymbol{y} - \boldsymbol{y}'\|^2)$, stronger than Assumption 3.1c. 
    (b) Additionally assumes both $|f(\boldsymbol{x}, \boldsymbol{y})|$ and $|g(\boldsymbol{x}, \boldsymbol{y})|$ are upper bounded. 
    (c) Additionally assumes $\nabla^2 f$ is Lipschitz and gradients are bounded. \\
    $^\dag$ Each iteration requires computing $(\nabla_{yy}^2g(\boldsymbol{x},\boldsymbol{y}))^{-1}\boldsymbol{v}$, typically via Conjugate Gradient (CG). \\
    $^*$ The algorithm does not explicitly construct the Hessian, though the analytic form involves 2nd-order derivatives.
\end{minipage}
\end{table*}

If we let the Lagrange function as $\mathcal{L}_\lambda(\boldsymbol{x}, \boldsymbol{y}):=f(\boldsymbol{x}, \boldsymbol{y})+\lambda\left(g(\boldsymbol{x}, \boldsymbol{y})-g\left(\boldsymbol{x}, \fy^*(\boldsymbol{x}\right)\right)$, \citet{Kown23Fully} shows that \eqref{eq:bilevel} can be effectively solved by the following formulation:
\begin{equation}
\label{eq:approximation_L_function}
\begin{split}
    \min _{\boldsymbol{x} \in \mathbb{R}^{d_x}} \mathcal{L}_\lambda^*(\boldsymbol{x}) &:= \mathcal{L}_\lambda\left(\boldsymbol{x}, \boldsymbol{y}_\lambda^*(\boldsymbol{x})\right), \\
    \text{where} \quad \boldsymbol{y}_\lambda^*(\boldsymbol{x}) &:= \argmin _{\boldsymbol{y} \in \mathbb{R}^{d_y}} \mathcal{L}_\lambda(\boldsymbol{x}, \boldsymbol{y}).
\end{split}
\end{equation}
The gradient $\nabla \fL_{\lambda}^*(\x)$ can be expressed as:
\begin{equation}
\label{eq:gradient-lagrange}
\begin{split}
    \nabla \mathcal{L}_\lambda^*(\boldsymbol{x}) &= \nabla_x f(\boldsymbol{x}, \boldsymbol{y}_\lambda^*(\boldsymbol{x})) \\
    &\quad + \lambda \big( \nabla_x g(\boldsymbol{x}, \boldsymbol{y}_\lambda^*(\boldsymbol{x})) - \nabla_x g(\boldsymbol{x}, \boldsymbol{y}^*(\boldsymbol{x})) \big).
\end{split}
\end{equation}
which can be approximated by using only first-order oracles of $f$ and $g$. 
\citet{Kown23Fully} proposed F$^2$BA, which performs the inexact gradient descent on $\fL_{\lambda}^*(\cdot)$.
They showed that F$^2$BA can find the $\epsilon$-stationary point of $\varphi(\cdot)$ within $\OM(\epsilon^{-3})$ calls to the gradient of $f$ and $g$. 
Later, \citet{Chen23Nearoptimal} improved the iteration complexities of F$^2$BA to $\tilde{\OM}(\epsilon^{-2})$ and showed that this complexity could be further enhanced to $\tilde{\OM}(\epsilon^{-1.75})$ if $\nabla^2\varphi(\cdot)$ is Lipschitz continuous. 
It is important to note that these rates (nearly) match those of the second-order methods for bilevel optimization mentioned earlier, while requiring only first-order oracles.
Given these results, we find the advantages of accessing second-order oracles in bilevel optimization appear to be rather limited. 
Thus, it is a natural question to ask: 

\textit{``Can we develop a second-order method with improved iteration complexities that demonstrates the benefits of utilizing second-order oracles in bilevel optimization?''}

To achieve better iteration complexity, a straightforward approach is to employ an inexact Newton-type algorithm that utilizes not only the hypergradient $\nabla\varphi(\fx)$, but also the hyperHessian $\nabla^2\varphi(\fx)$.
However, accessing $\nabla^2\varphi(\fx)$ requires the third-order derivatives of $g$, which is too expensive or even not feasible.
In this work, we adopt the idea of fully first-order bilevel approximation methods to tackle the bilevel optimization problem~\eqref{eq:bilevel} via its approximation~\eqref{eq:approximation_L_function}.
We design a fully second-order bilevel approximation (FSBA) method that estimates both $\nabla \fL_{\lambda}^{*}(\fx)$ and $\nabla^2 \fL_{\lambda}^*(\fx)$, performing inexact cubic-regularized Newton iterations to find the stationary point of $\fL_{\lambda}^*(\cdot)$.
We prove that our proposed FSBA method requires only $\tilde{\OM}(\epsilon^{-1.5})$ oracles to the gradient and Hessian of $f$ and $g$, which surpass the performance of the state-of-the-art second-order methods for bilevel optimization. 
Since nonconvex-strongly-concave bilevel optimization problems subsume nonconvex optimization problems,
the oracle complexity of FSBA is near-optimal, aligning with the lower bound $\Omega(\epsilon^{-1.5})$ by~\citet{carmon2020lower}.

Considering the heavy computational cost of second-order oracles, existing second-order methods often use iterative techniques to compute the product of the Hessian inverse and the gradient, or they apply approximation algorithms to reduce the computational burden \cite{ghadimi2018approximation,yang2025lancbio}. More recently, quasi-Newton methods have been developed to efficiently solve the lower-level problem~\cite{fang2025qnbo}. 
Therefore, it is also crucial to reduce the computational complexity of the FSBA method.

To address this, we leverage the concept of lazy Hessians \cite{doikov2023second,doikov2023first} and propose a lazy fully second-order bilevel approximation method (LFSBA). 
Instead of computing the approximate Hessian at each iteration, as is common in the existing literature, LFSBA estimates $\nabla^2 \fL_{\lambda}^*(\cdot)$ only at specific snapshot points and reuses this approximate Hessian for the next $m$ iterations.
We demonstrate that LFSBA achieves an iteration complexity of $\tilde{\OM}(\sqrt{m}\epsilon^{-1.5})$, providing a better computational complexity than FSBA by properly tuning $m$.  
We compare FSBA and LFSBA with existing first- and second-order methods in Table~\ref{tab:compare-alg}.

\paragraph{Paper organization.}
In Section~\ref{sec:pre}, we introduce the notation and assumptions. 
In Section~\ref{sec:FSBA}, we propose the FSBA method and study its convergence behavior.
In Section~\ref{sec:LFSBA}, we propose the lazy variant of FSBA method (LFSBA) and show that it has a better computational complexity than FSBA. 
We conduct experiments to validate our theoretical results in Section~\ref{sec:exp} and summarize this paper in Section~\ref{sec:conclu}.
All proofs are deferred to the Appendix.

\section{Preliminaries}
\label{sec:pre}
We first introduce some basic notation.
For a twice differentiable function $f(\boldsymbol{x}, \boldsymbol{y})$, its partial gradients with respect to $\boldsymbol{x}$ and $\boldsymbol{y}$ are denoted
by $\nabla_x f(\boldsymbol{x}, \boldsymbol{y})$ and $\nabla_y f(\boldsymbol{x}, \boldsymbol{y})$, respectively. 
The Hessian matrix of $f$ at point $(\boldsymbol{x}, \boldsymbol{y})$ is partitioned as $\nabla^2 f(\boldsymbol{x}, \boldsymbol{y})=\begin{bmatrix}
    \nabla_{x x}^2 f(\boldsymbol{x}, \boldsymbol{y}), \nabla_{x y}^2 f(\boldsymbol{x}, \boldsymbol{y})\\
   \nabla_{y x}^2 f(\boldsymbol{x}, \boldsymbol{y}), \nabla_{y y}^2 f(\boldsymbol{x}, \boldsymbol{y}) 
\end{bmatrix}$.
We use $\|\cdot\|$ to denote the spectral norm of matrices and the Euclidean norm of vectors, respectively. 
Given a real symmetric matrix $\A$, we let $\lambda_{\min }(\A)$ denote its smallest eigenvalue. 
Furthermore, we denote the following quantity, for any 
$\boldsymbol{x} \in \mathbb{R}^{d_x}$, $\xi(\boldsymbol{x}):=\left[-\lambda_{\min }\left(\nabla^2 f(\boldsymbol{x}) \right)\right]_{+}$, where
$[t]_{+}:= \max \{t, 0\}$ denotes the positive part. 
Finally, we use $\mathcal{O}(\cdot)$ to hide only absolute constants that are independent of any problem parameters, $\tilde{\mathcal{O}}(\cdot)$ to additionally hide polylogarithmic factors, and $\Omega(\cdot)$ to denote an asymptotic lower bound up to a constant factor.

We now introduce the following assumptions on the upper-level function $f$ and the
lower-level function $g$. 
\begin{assumption}
\label{assum:basic}  
Suppose that the upper-level function $f$, the lower-level function $g$ and the hyper-objective $\varphi$ satisfy the following conditions:
\begin{enumerate}[label=(\alph*),leftmargin=0.7cm]
\item  $g(\boldsymbol{x}, \boldsymbol{y})$  is three times differentiable and $\mu$-strongly convex with respect to $\boldsymbol{y}$ for any fixed $\boldsymbol{x}$;\vskip-0.14cm
\item $f(\boldsymbol{x}, \boldsymbol{y})$ is twice differentiable and $C$-Lipschitz with respect to $\boldsymbol{x}$ and $\boldsymbol{y}$;\vskip-0.14cm
\item $g(\boldsymbol{x}, \boldsymbol{y})$ and $f(\boldsymbol{x}, \boldsymbol{y})$ are $\ell$-gradient Lipschitz with respect to $\boldsymbol{x}$ and $\boldsymbol{y}$;\vskip-0.14cm
\item $f(\boldsymbol{x}, \boldsymbol{y})$ and $g(\boldsymbol{x}, \boldsymbol{y})$ are $\rho$-Hessian Lipschitz with respect to $\boldsymbol{x}$ and $\boldsymbol{y}$;\vskip-0.14cm
\item $g(\boldsymbol{x}, \boldsymbol{y})$ is $\nu$-third-order derivative Lipschitz with respect to $\boldsymbol{x}$ and $\boldsymbol{y}$;\vskip-0.14cm
\item $\varphi(\boldsymbol{x})$ is lower bounded, i.e. $\varphi^*:=\min _{\boldsymbol{x} \in \mathbb{R}^{d_x}} \varphi(\boldsymbol{x})>-\infty$.
\end{enumerate}
\end{assumption}
These assumptions are common and standard for the nonconvex strongly-convex bilevel optimization~\cite{yang2023accelerating,Chen23Nearoptimal,Kown23Fully, ghadimi2018approximation}. 
We define the condition number as follows.
\begin{definition}
    \label{def:condition-number}
We define the largest smoothness constant $\bar{\ell}:=\max \left\{C, \ell, \nu, \rho \right\}$ and the condition number $\kappa:=\bar{\ell} / \mu$.
\end{definition}
Assumption~\ref{assum:basic} means that $\fy^*(\x)$ is Lipschitz continuous, as presented in the following proposition.
\begin{proposition}[Lemma 2.2, \citet{ghadimi2018approximation}]
\label{prop:condinum_y_star}
Under Assumption~\ref{assum:basic}, $\fy^*(\cdot)$ is $\kappa$-Lipschitz continuous.
\end{proposition}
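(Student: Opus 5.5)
The plan is to use the implicit function theorem on the first-order optimality condition of the lower-level problem, and then bound the Jacobian of $\fy^*$ using strong convexity and gradient Lipschitzness.

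\textbf{Step 1 (differentiability).} By Assumption~\ref{assum:basic}(a), for each $\fx$ the function $g(\fx,\cdot)$ is $\mu$-strongly convex. Hence $\fy^*(\fx)$ exists, is unique, and is characterized by
\[
\nabla_y g(\fx,\fy^*(\fx))=\0 .
\]
The map $(\fx,\fy)\mapsto \nabla_y g(\fx,\fy)$ is continuously differentiable, since $g$ is three times differentiable. Its partial Jacobian in $\fy$ is $\nabla^2_{yy} g(\fx,\fy)\succeq \mu \I$, which is invertible. The implicit function theorem therefore gives that $\fy^*(\cdot)$ is differentiable.

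\textbf{Step 2 (Jacobian formula).} Differentiating the optimality identity gives
\[
\nabla_{yx}^2 g(\fx,\fy^*(\fx)) + \nabla^2_{yy} g(\fx,\fy^*(\fx))\,\nabla\fy^*(\fx)=\0 .
\]
In the paper's orientation this reads $\nabla \fy^*(\fx)^\top=-\nabla^2_{xy}g\,(\nabla^2_{yy}g)^{-1}$, evaluated at $(\fx,\fy^*(\fx))$.

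\textbf{Step 3 (norm bound).} Strong convexity gives $\|(\nabla^2_{yy}g)^{-1}\|\le 1/\mu$. The $\ell$-gradient Lipschitz property in Assumption~\ref{assum:basic}(c) bounds the full Hessian by $\|\nabla^2 g\|\le \ell$. Since $\nabla^2_{xy}g$ is an off-diagonal block of $\nabla^2 g$, also $\|\nabla^2_{xy}g\|\le \ell$. Combining these,
\[
\|\nabla\fy^*(\fx)\|\le \ell/\mu\le \bar\ell/\mu=\kappa .
\]
The mean value inequality along the segment between any two points $\fx,\fx'$ then yields $\|\fy^*(\fx)-\fy^*(\fx')\|\le\kappa\|\fx-\fx'\|$.

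\textbf{Alternative avoiding the implicit function theorem.} One can argue directly. Strong monotonicity of $\nabla_y g(\fx',\cdot)$ gives
\[
\mu\|\fy^*(\fx)-\fy^*(\fx')\|\le\|\nabla_y g(\fx',\fy^*(\fx))-\nabla_y g(\fx',\fy^*(\fx'))\|=\|\nabla_y g(\fx',\fy^*(\fx))\|,
\]
where the equality uses $\nabla_y g(\fx',\fy^*(\fx'))=\0$. Subtracting $\nabla_y g(\fx,\fy^*(\fx))=\0$ inside the last norm and applying $\ell$-Lipschitzness of $\nabla g$ bounds it by $\ell\|\fx-\fx'\|$. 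This gives the same constant $\ell/\mu\le\kappa$.

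\textbf{Main obstacle.} There is no real obstacle. The only point needing care is the bound on the off-diagonal block $\|\nabla_{xy}^2 g\|\le\ell$, which follows because the norm of a submatrix is at most the norm of the full matrix. The direct monotonicity argument avoids even this, so I would present it as the main proof.
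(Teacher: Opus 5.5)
Your proposal is correct, and both arguments give the constant $\ell/\mu\le\bar{\ell}/\mu=\kappa$. The paper gives no proof of its own here. It cites Lemma~2.2 of \citet{ghadimi2018approximation} and later remarks in the appendix that $\fy^*(\cdot)$ is $\ell/\mu$-Lipschitz. The argument behind that citation is the implicit-differentiation one, essentially your Steps 1--3: differentiate $\nabla_y g(\fx,\fy^*(\fx))=\mathbf{0}$ and bound the Jacobian by $\|\nabla^2_{xy}g\|\,\|(\nabla^2_{yy}g)^{-1}\|$.

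Your preferred strong-monotonicity argument is a genuinely more elementary route. It uses only $\mu$-strong convexity of $g(\fx,\cdot)$ and Lipschitzness of $\nabla_y g$ in $\fx$. It needs no second derivatives, no implicit function theorem and no mean value inequality. The same computation, with strong convexity modulus $\lambda\mu/2$ and Lipschitz constant $(1+\lambda)\ell$ for $\nabla_y\mathcal{L}_\lambda$, gives the constant $2(1+\lambda)\ell/(\lambda\mu)\le 3\ell/\mu$ when $\lambda\ge 2\ell/\mu$. That recovers Lemma~\ref{lem:gradient_smooth_y_star} as well.

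What the Jacobian route buys in exchange is the explicit formula for the derivative of $\fy^*$. That formula underlies the hypergradient and Hessian expressions the paper relies on. For the Lipschitz claim alone, your monotonicity proof is sufficient and cleaner.
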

Furthermore, when $\lambda$ is large enough, we also have that $\fL_{\lambda}^{*}(\cdot)$ is a good proxy of $\varphi(\cdot)$ and that $\nabla \fL_{\lambda}^*(\cdot)$, $\nabla^2 \fL_{\lambda}^*(\cdot)$ are Lipschitz continuous.
\begin{proposition}[Lemma 4.1, Lemma 5.1 in \citet{Chen23Nearoptimal}, and Lemma 3.2 in \citet{chen2025faster}; Lemma 3.1 in \citet{Kown23Fully}]
\label{prop:proxy_property}
Under Assumption~\ref{assum:basic} and let $\lambda \geq 2 \ell / \mu$, we have:
\begin{enumerate}[label=(\alph*),leftmargin=0.5cm]
\item  $\left|\mathcal{L}_\lambda^*(\boldsymbol{x}) - \varphi(\boldsymbol{x})\right| \!=\! \mathcal{O}({\bar{\ell} \kappa^2}/{\lambda})$, $\left\|\nabla \mathcal{L}_\lambda^*(\boldsymbol{x}) - \nabla \varphi(\boldsymbol{x})\right\|\! =\! \mathcal{O}({\bar{\ell} \kappa^3}/{\lambda})$, and $\left\|\nabla^2 \mathcal{L}_\lambda^*(\boldsymbol{x}) - \nabla^2 \varphi(\boldsymbol{x})\right\| = \mathcal{O}({\bar{\ell} \kappa^5}/{\lambda})$ hold for all $\x\in\RB^{d_x}$.
\vskip-0.2cm
\item $\nabla \mathcal{L}_\lambda^*(\boldsymbol{x})$ is $L$-Lipschitz continuous, where $L\!:= \mathcal{O}\!\left(\bar{\ell} \kappa^3\!\right)$. \vskip-0.2cm
\item $\nabla^2\mathcal{L}_\lambda^*(\boldsymbol{x})$ is $\bar{\rho}$-Lipschitz continuous, where $\bar{\rho} \!\!:= \mathcal{O}\!\left(\bar{\ell} \kappa^5\!\right)\!.$
\end{enumerate}
\end{proposition}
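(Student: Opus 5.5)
The plan is to reduce everything to the structure of $\mathcal{L}_\lambda$ near $\boldsymbol{y}^*(\boldsymbol{x})$ and $\boldsymbol{y}_\lambda^*(\boldsymbol{x})$. I will use the implicit function theorem throughout.

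\textbf{Step 1: basic facts about $\boldsymbol{y}_\lambda^*$.} For $\lambda\ge 2\ell/\mu$, the function $\mathcal{L}_\lambda(\boldsymbol{x},\cdot)=f+\lambda g$ is $(\lambda\mu-\ell)\ge\lambda\mu/2$-strongly convex in $\boldsymbol{y}$. Hence $\boldsymbol{y}_\lambda^*$ is well defined. Comparing the optimality conditions $\nabla_y f(\boldsymbol{x},\boldsymbol{y}_\lambda^*)+\lambda\nabla_y g(\boldsymbol{x},\boldsymbol{y}_\lambda^*)=0$ and $\nabla_y g(\boldsymbol{x},\boldsymbol{y}^*)=0$ gives
\[
\|\boldsymbol{y}_\lambda^*(\boldsymbol{x})-\boldsymbol{y}^*(\boldsymbol{x})\|\le \frac{C}{\lambda\mu}.
\]
Differentiating the two optimality conditions gives
\[
\nabla\boldsymbol{y}_\lambda^*=-(\nabla^2_{yy}f+\lambda\nabla^2_{yy}g)^{-1}(\nabla^2_{yx}f+\lambda\nabla^2_{yx}g),\qquad \nabla\boldsymbol{y}^*=-(\nabla^2_{yy}g)^{-1}\nabla^2_{yx}g .
\]
Both Jacobians are $\mathcal{O}(\kappa)$. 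Their difference is $\mathcal{O}(\kappa^2/\lambda)$ after a perturbation bound that uses Hessian Lipschitzness to move the evaluation point from $\boldsymbol{y}_\lambda^*$ to $\boldsymbol{y}^*$.

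\textbf{Step 2: part (a).} For the value, write $\mathcal{L}_\lambda^*-\varphi=[f(\boldsymbol{x},\boldsymbol{y}_\lambda^*)-f(\boldsymbol{x},\boldsymbol{y}^*)]+\lambda[g(\boldsymbol{x},\boldsymbol{y}_\lambda^*)-g(\boldsymbol{x},\boldsymbol{y}^*)]$. The first bracket is $\mathcal{O}(C^2/(\lambda\mu))$. The second is $\le \lambda\ell\|\boldsymbol{y}_\lambda^*-\boldsymbol{y}^*\|^2/2=\mathcal{O}(\ell C^2/(\lambda\mu^2))$. Together these are within $\mathcal{O}(\bar\ell\kappa^2/\lambda)$.

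For the gradient, start from \eqref{eq:gradient-lagrange}. Taylor-expand $\lambda(\nabla_x g(\boldsymbol{x},\boldsymbol{y}_\lambda^*)-\nabla_x g(\boldsymbol{x},\boldsymbol{y}^*))=\lambda\nabla^2_{xy}g\,(\boldsymbol{y}_\lambda^*-\boldsymbol{y}^*)+\mathcal{O}(\lambda\rho\|\boldsymbol{y}_\lambda^*-\boldsymbol{y}^*\|^2)$. Then use the first-order expansion $\lambda(\boldsymbol{y}_\lambda^*-\boldsymbol{y}^*)=-(\nabla^2_{yy}g)^{-1}\nabla_y f+\mathcal{O}(\cdot/\lambda)$ obtained from the optimality condition. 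This recovers the hypergradient formula up to $\mathcal{O}(\bar\ell\kappa^3/\lambda)$.

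For the Hessian, differentiate \eqref{eq:gradient-lagrange} once more. This gives $\nabla^2\mathcal{L}_\lambda^*$ in terms of second derivatives of $f,g$ at $\boldsymbol{y}_\lambda^*$ and $\boldsymbol{y}^*$ and the Jacobians from Step 1. I will compare it with the known expression of $\nabla^2\varphi$, which involves third derivatives of $g$. The key observation is that $\lambda[\nabla^2_{xx}g(\boldsymbol{y}_\lambda^*)-\nabla^2_{xx}g(\boldsymbol{y}^*)]$ and the analogous cross terms become third-derivative terms contracted with $\lambda(\boldsymbol{y}_\lambda^*-\boldsymbol{y}^*)\approx -(\nabla^2_{yy}g)^{-1}\nabla_yf$. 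The remainders are controlled by $\nu$, which is exactly why Assumption~\ref{assum:basic}(e) is needed.

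\textbf{Step 3: parts (b) and (c).} I will not bound Lipschitz constants of $\nabla\mathcal{L}_\lambda^*$ directly, since those carry $\lambda$. Instead, bound $\|\nabla^2\mathcal{L}_\lambda^*\|$ uniformly by $\mathcal{O}(\bar\ell\kappa^3)$ using the explicit formula from Step 2. In that formula every $\lambda$ is paired with a difference of size $1/\lambda$, or enters through $\lambda(\nabla\boldsymbol{y}_\lambda^*-\nabla\boldsymbol{y}^*)=\mathcal{O}(\kappa^2)$. For (c), differentiate once more, or equivalently bound increments of each factor in the Hessian formula. Each factor is $\mathcal{O}(\kappa)$-Lipschitz or $\mathcal{O}(\bar\ell\kappa^k)$-bounded, and products of these give $\mathcal{O}(\bar\ell\kappa^5)$.

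\textbf{Main obstacle.} The main difficulty is the Hessian comparison in (a) and the $\lambda$-independent Lipschitz bound in (c). Both require showing that every $\lambda$-weighted term cancels to $\mathcal{O}(1)$ via the expansion of $\lambda(\boldsymbol{y}_\lambda^*-\boldsymbol{y}^*)$ and $\lambda(\nabla\boldsymbol{y}_\lambda^*-\nabla\boldsymbol{y}^*)$. I would first establish these two expansions with explicit $\mathcal{O}(\kappa^k/\lambda)$ remainders as a standalone lemma, then plug them in mechanically.
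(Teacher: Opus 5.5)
The paper gives no proof of this proposition; it is imported from Kwon et al.\ and Chen et al. Your direct perturbation route is a legitimate way to establish it. Step~1, the value and gradient bounds in (a), and the Hessian comparison in (a) all go through. In that comparison the $\lambda$-weighted $g$-differences do converge to exactly the $\nabla^3 g$ terms of $\nabla^2\varphi$ contracted with $-(\nabla^2_{yy}g)^{-1}\nabla_y f$. The uniform Hessian bound for (b) also goes through.

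There is one bookkeeping slip. The perturbation bound you describe gives $\|\nabla\fy_\lambda^*-\nabla\fy^*\|\le \frac{2}{\mu}(\frac{\rho C}{\lambda\mu}+\frac{\ell}{\lambda})(1+\frac{\ell}{\mu})$. In the paper's normalization (all constants replaced by $\bar\ell$) this is $\mathcal{O}(\kappa^3/\lambda)$, not $\mathcal{O}(\kappa^2/\lambda)$. The slip is harmless, because $\ell\cdot\lambda\|\nabla\fy_\lambda^*-\nabla\fy^*\|=\mathcal{O}(\bar\ell\kappa^3)$ is precisely the term that sets $L$ in (b).

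The one real gap is in (c). Bounding increments factor by factor, as in your Step~3, fails on the $\lambda$-weighted terms. The maps $\nabla^2 g(\fx,\fy_\lambda^*(\fx))$ and $\nabla^2 g(\fx,\fy^*(\fx))$ are each only $\mathcal{O}(\rho\kappa)$-Lipschitz, so $\lambda$ times their difference gets a constant of order $\lambda\rho\kappa$.

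Your proposed remedy does not close this. Pointwise expansions of $\lambda(\fy_\lambda^*-\fy^*)$ and $\lambda(\nabla\fy_\lambda^*-\nabla\fy^*)$ with $\mathcal{O}(1/\lambda)$ remainders control sizes, not increments. They only yield $\|\nabla^2\fL_\lambda^*(\fx)-\nabla^2\fL_\lambda^*(\fx')\|\le c\|\fx-\fx'\|+\mathcal{O}(1/\lambda)$, which is not Lipschitz continuity. Literally differentiating the Hessian formula once more is also unavailable, since $f$ is only assumed twice differentiable.

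The missing step is to show that the $\lambda$-scaled differences themselves have $\lambda$-independent Lipschitz constants.
\begin{enumerate}
\item Set $\Phi_\lambda(\fx):=\lambda[\nabla^2 g(\fx,\fy_\lambda^*(\fx))-\nabla^2 g(\fx,\fy^*(\fx))]$. It is differentiable because $g$ is three times differentiable. Its derivative is bounded by $\nu\,\lambda\|\fy_\lambda^*-\fy^*\|(1+\|\nabla\fy_\lambda^*\|)+\rho\,\lambda\|\nabla\fy_\lambda^*-\nabla\fy^*\|$.
\item This bound is $\lambda$-free by the uniform bounds $\lambda\|\fy_\lambda^*-\fy^*\|\le C/\mu$ and $\lambda\|\nabla\fy_\lambda^*-\nabla\fy^*\|=\mathcal{O}(\kappa^3)$. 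This is exactly where assumption (e) enters for (c).
\item Let $\A=\nabla^2_{yy}g(\fx,\fy^*)$ and $\mathbf{B}=\nabla^2_{yx}g(\fx,\fy^*)$, and let $\tilde{\A},\tilde{\mathbf{B}}$ be the corresponding blocks of $g+\lambda^{-1}f$ at $\fy_\lambda^*$. Then $\lambda(\nabla\fy_\lambda^*-\nabla\fy^*)=-\tilde{\A}^{-1}\lambda(\tilde{\mathbf{B}}-\mathbf{B})+\tilde{\A}^{-1}\lambda(\tilde{\A}-\A)\A^{-1}\mathbf{B}$.
\item Here $\lambda(\tilde{\A}-\A)$ and $\lambda(\tilde{\mathbf{B}}-\mathbf{B})$ are blocks of $\Phi_\lambda$ plus the unamplified terms $\nabla^2_{yy}f$ and $\nabla^2_{yx}f$ at $\fy_\lambda^*$. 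Those terms are Lipschitz by $\rho$ alone.
\end{enumerate}
Every factor in $\nabla^2\fL_\lambda^*$ is then bounded and Lipschitz with $\lambda$-free constants. The same therefore holds for $\lambda(\nabla\fy_\lambda^*-\nabla\fy^*)$ and for the remaining $f$-terms, and the bookkeeping gives $\bar\rho=\mathcal{O}(\bar\ell\kappa^5)$.
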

In addition, $\fL_{\lambda}(\fx,\cdot)$ is strongly convex in $\fy$ when $\lambda$ is large enough.
\begin{proposition}[Lemma 3.2, \citet{Kown23Fully}]
\label{prop:strong_convex_L} 
Under Assumption \ref{assum:basic}, if $\lambda \geq 2 \ell / \mu$, then $\mathcal{L}_\lambda(\boldsymbol{x}, \cdot)$ is $(\lambda \mu / 2)$-strongly convex, $(1+\lambda)\ell$-smooth.
The condition number of $\fL_{\lambda}(\fx,\cdot)$ is $3\kappa$.
\end{proposition}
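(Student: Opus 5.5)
We prove the two claims separately by writing out the $\fy$-Hessian of $\fL_\lambda(\fx,\cdot)$. Since $g(\fx,\fy^*(\fx))$ does not depend on $\fy$, we have
\begin{align*}
\nabla^2_{yy}\fL_\lambda(\fx,\fy)=\nabla^2_{yy} f(\fx,\fy)+\lambda\nabla^2_{yy} g(\fx,\fy).
\end{align*}
It therefore suffices to bound the extreme eigenvalues of this matrix uniformly in $(\fx,\fy)$. Both claims then follow by the standard equivalences: a uniform lower eigenvalue bound gives strong convexity, and a uniform upper bound on the spectral norm gives gradient Lipschitzness.

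For the lower bound, Assumption~\ref{assum:basic}(a) gives $\nabla^2_{yy} g\succeq \mu\I$. Assumption~\ref{assum:basic}(c) gives $\nabla^2_{yy} f\succeq -\ell\I$, since $\ell$-gradient Lipschitzness bounds the full Hessian, and hence the $yy$ block, in spectral norm by $\ell$. Combining,
\begin{align*}
\nabla^2_{yy}\fL_\lambda\succeq(\lambda\mu-\ell)\I.
\end{align*}
The condition $\lambda\ge 2\ell/\mu$ means $\ell\le\lambda\mu/2$, so $\lambda\mu-\ell\ge\lambda\mu/2$. This gives $(\lambda\mu/2)$-strong convexity.

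For the upper bound, the same spectral-norm bounds from Assumption~\ref{assum:basic}(c) give
\begin{align*}
\|\nabla^2_{yy}\fL_\lambda\|\le \ell+\lambda\ell=(1+\lambda)\ell,
\end{align*}
which is the claimed smoothness.

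The condition number is the ratio of the two constants:
\begin{align*}
\frac{(1+\lambda)\ell}{\lambda\mu/2}=\frac{2(1+\lambda)\ell}{\lambda\mu}.
\end{align*}
From $\lambda\ge 2\ell/\mu$ we get $1/\lambda\le\mu/(2\ell)$, so $(1+\lambda)/\lambda=1+1/\lambda\le 1+\mu/(2\ell)$. Also $\mu\le\ell$, because strong convexity and smoothness of $g$ in $\fy$ together force the $yy$ Hessian eigenvalues to lie in $[\mu,\ell]$. Hence $(1+\lambda)/\lambda\le 3/2$, and the ratio is at most $3\ell/\mu\le 3\bar\ell/\mu=3\kappa$. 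So the condition number is at most $3\kappa$, which is the sense in which the statement's ``is $3\kappa$'' holds.

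The only step needing care is this last one: the constants must be combined so as to get exactly the factor $3$. It relies on using $\mu\le\ell$ together with the lower bound on $\lambda$.
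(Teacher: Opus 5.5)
Your proof is correct and follows the standard argument behind the cited Lemma 3.2 of \citet{Kown23Fully}, which the paper invokes without reproducing it. That argument writes $\nabla^2_{yy}\mathcal{L}_\lambda=\nabla^2_{yy}f+\lambda\nabla^2_{yy}g$ and bounds it below by $(\lambda\mu-\ell)\I\succeq(\lambda\mu/2)\I$ and above by $(1+\lambda)\ell\,\I$. Your derivation of the factor $3$ (via $1/\lambda\le\mu/(2\ell)\le 1/2$ and $\ell\le\bar{\ell}$) correctly supplies the condition-number claim, in the ``at most $3\kappa$'' sense in which the paper uses it when setting $\kappa_2=3\kappa$ in AGD.
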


Finally, we give the formal definition of the $\epsilon$-first-order stationary points and $(\epsilon, \tau)$-second-order stationary points as follows.
\begin{definition}
\label{def:2}  
We call $\hat{\boldsymbol{x}}$ an $\epsilon$-first-order stationary point (FOSP) of $\varphi(\boldsymbol{x})$ if $\|\nabla \varphi(\hat{\boldsymbol{x}})\| \!\leq \!\epsilon$.
\end{definition}
\begin{definition}
\label{def:3}  
We call $\hat{\boldsymbol{x}}$ an $(\epsilon, \tau)$-second-order stationary point (SOSP) of $\varphi(\boldsymbol{x})$ if $\|\nabla \varphi(\hat{\boldsymbol{x}})\| \leq \epsilon$ and $\lambda_{\min }\left(\nabla^2 \varphi(\hat{\boldsymbol{x}})\right) \geq-\tau$.
\end{definition}

\section{Fully Second-Order Bilevel Approximation Method}
\label{sec:FSBA}
In this section, we introduce our fully second-order bilevel approximation method (FSBA) and present its convergence analysis.
We also introduce an inexact variant of second-order method for practical consideration.
\subsection{The FSBA Method}
Proposition~\ref{prop:proxy_property} shows that $\fL_{\lambda}^*(\cdot)$ defined in \eqref{eq:approximation_L_function} is a good approximation of $\varphi(\cdot)$, and that $(\epsilon,\OM(\sqrt{\epsilon}))$-SOSP $\fL_{\lambda}^*(\x)$ is also an $(\OM(\epsilon),\OM(\sqrt{\epsilon}))$-SOSP of $\varphi(\cdot)$.
Hence, the main idea of our FSBA method is to apply second-order methods to solve the proxy function $\fL_{\lambda}^*(\cdot)$ instead of $\varphi(\cdot)$.

The Hessian of $\fL_{\lambda}^*(\fx)$ can be expressed by
{\small\begin{align*}
    &\nabla^2\fL_{\lambda}^*(\fx) = \nabla^2_{xx} \fL_{\lambda}(\fx,\fy_{\lambda}^*(\fx))\\
    &~ \nabla_{x y}^2 \mathcal{L}_\lambda\big(\boldsymbol{x}, \fy_\lambda^*(\boldsymbol{x})\big)
     \left[\nabla_{y y}^2 \mathcal{L}_\lambda\big(\boldsymbol{x}, \fy_\lambda^*(\boldsymbol{x})\big)\right]^{-1} \nabla_{y x}^2 \mathcal{L}_\lambda\big(\boldsymbol{x}, \fy_\lambda^*(\boldsymbol{x})\big)
\end{align*}}
\!\!where the Hessian block of $\fL_{\lambda}(\x,\y)$ can be computed individually by only second-order information of $f$ and $g$.
Ideally, using $\nabla \fL^*_{\lambda}(\fx)$ and $\nabla^2 \fL^*_{\lambda}(\fx)$ to construct the cubic-regularized Newton (CRN) method on $\fL^{*}(\cdot)$ with the following update direction
\begin{align}
\label{eq:cubic-CRN}
\begin{split}
\!\!\fs^* \!=\! \argmin_{\fs\in\RB^{d_x}}\! \big\{\fs^{\top}\nabla\fL_{\lambda}^*(\fx)\!+\!\frac{\fs^{\top}\!\nabla^2 \fL_{\lambda}^*(\fx)\fs}{2}\!+\!\frac{M\!\|\fs\|^3}{6}\!\big\}
\end{split}
\end{align}
can find an $(\epsilon,\OM(\sqrt{\epsilon}))$-SOSP of $\fL_{\lambda}^*(\cdot)$ within $\OM(\epsilon^{-1.5})$ iterations. 
However, $\nabla \fL_{\lambda}^*(\fx)$ and $\nabla^2\fL_{\lambda}^*(\fx)$ cannot be computed exactly since they contain $\fy_{\lambda}^*(\fx)$ and $\fy^*(\fx)$.
The following lemma shows that $\nabla \fL_{\lambda}^*(\fx)$ and $\nabla^2\fL_{\lambda}^*(\fx)$ can be estimated by introducing additional variables $\fy$ and $\fw$. 
\begin{lemma}
\label{lem:estimators_error} 
Under Assumption~\ref{assum:basic} and let 
{\small\begin{align}
\begin{split}
\label{eq:grad-est}
    \g(\fx;\fy,\fw) : = \nabla_x f\left(\boldsymbol{x}, \boldsymbol{y}\right) \!+\! \lambda\left(\nabla_x g\left(\boldsymbol{x}, \boldsymbol{y}\right) \!-\! \nabla_x g\left(\boldsymbol{x}, \boldsymbol{w}\right)\right)
\end{split}\\
\label{eq:Hess-est}
\begin{split}
   &\H(\fx;\fy,\fw):=  \\
   &~~  \nabla_{x x}^2 f\left(\boldsymbol{x}, \fy\right) +\lambda\big(\nabla_{x x}^2 g\left(\boldsymbol{x}, \fy\right)-\nabla_{x x}^2 g\left(\boldsymbol{x}, \fw\right)\big)\\
   &~~~~-\nabla_{x y}^2 \mathcal{L}_\lambda\left(\boldsymbol{x}, \fy\right)\!\left[\nabla_{y y}^2 \mathcal{L}_\lambda\left(\boldsymbol{x}, \fy\right)\right]^{-1}\!\nabla_{y x}^2 \mathcal{L}_\lambda\left(\boldsymbol{x}, \fy\right)\\
   &~~~~+\lambda \nabla_{x y}^2 g\left(\boldsymbol{x}, \fw\right)\left[\nabla_{y y}^2 g\left(\boldsymbol{x}, \fw\right)\right]^{-1}\nabla_{y x}^2 g\left(\boldsymbol{x}, \fw\right), 
\end{split}
\end{align}
}
\!\!then we have
{\small
\begin{align*}
&\left\|{\nabla} \mathcal{L}_\lambda^*(\fx)\!-\!\g(\fx;\fy,\!\fw)\!\right\| \!\leq\! 2 \lambda \ell \left\|\boldsymbol{y}\!-\!\fy_\lambda^*\left(\boldsymbol{x}\right)\right\|\!+\!\lambda \ell\! \left\|\boldsymbol{w}\!-\!\fy^*\!(\boldsymbol{x})\right\|,
    \\
&\left\|{\nabla}^2 \mathcal{L}_\lambda^*\!\left(\boldsymbol{x}\right)\!-\!\H(\fx;\fy,\!\fw)\!\right\|\! \leq \!C_1\! \left\|\boldsymbol{w}\!-\!\fy^*\!\!\left(\boldsymbol{x}\right) \right\| \!+
\!C_2\!
 \left\|\boldsymbol{y} \!-\!\fy_\lambda^*\!\left(\boldsymbol{x}\right)\right\|,
\end{align*}
}
\!\!where $C_1 := \mathcal{\OM}(\lambda \bar{\ell} +\bar{\ell} \kappa^2)$ and $C_2 := \mathcal{\OM}( \lambda\bar{\ell} \kappa^2 )$.
\end{lemma}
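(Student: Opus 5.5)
The proof compares the estimators with the exact quantities term by term, using that $\g(\fx;\fy_\lambda^*(\fx),\fy^*(\fx))=\nabla\fL_\lambda^*(\fx)$ and $\H(\fx;\fy_\lambda^*(\fx),\fy^*(\fx))=\nabla^2\fL_\lambda^*(\fx)$. The first identity is \eqref{eq:gradient-lagrange}. For the second, I would write $\fL_\lambda^*(\fx)=f(\fx,\fy_\lambda^*(\fx))+\lambda g(\fx,\fy_\lambda^*(\fx))-\lambda g^*(\fx)$ with $g^*(\fx)=g(\fx,\fy^*(\fx))$. Applying the implicit-function (envelope) formula for the Hessian of a value function under strong convexity to both $\min_{\fy}\fL_\lambda(\fx,\fy)$ and $\min_{\fy} g(\fx,\fy)$ gives
\[
\nabla^2\fL_\lambda^*=\nabla_{xx}^2\fL_\lambda-\nabla_{xy}^2\fL_\lambda[\nabla_{yy}^2\fL_\lambda]^{-1}\nabla_{yx}^2\fL_\lambda \quad\text{at } (\fx,\fy_\lambda^*(\fx)),
\]
\[
\nabla^2 g^*=\nabla_{xx}^2 g-\nabla_{xy}^2 g[\nabla_{yy}^2 g]^{-1}\nabla_{yx}^2 g \quad\text{at } (\fx,\fy^*(\fx)).
\]
Combining these shows $\H$ evaluated at the exact points equals $\nabla^2\fL_\lambda^*(\fx)$.

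For the gradient bound, I subtract and use $\ell$-gradient-Lipschitzness of $f$ and $g$. The $\fy$-dependent terms contribute $\ell\|\fy-\fy_\lambda^*\|+\lambda\ell\|\fy-\fy_\lambda^*\|\le 2\lambda\ell\|\fy-\fy_\lambda^*\|$, where $\lambda\ge 1$ follows from $\lambda\ge 2\ell/\mu\ge 2$. The $\fw$-term contributes $\lambda\ell\|\fw-\fy^*\|$.

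For the Hessian bound, I split $\H$ into a $\fy$-part and a $\fw$-part and bound each by Lipschitz continuity. In the $\fy$-part, the blocks of $\nabla^2\fL_\lambda$ are $(1+\lambda)\rho$-Lipschitz in $\fy$ (Assumption (d)), with norms at most $(1+\lambda)\ell$. By Proposition~\ref{prop:strong_convex_L}, $\|[\nabla_{yy}^2\fL_\lambda]^{-1}\|\le 2/(\lambda\mu)$. For the product $ABA^\top$ I use
\[
\|A B A^\top-A'B'A'^\top\|\le \|A-A'\|\|B\|(\|A\|+\|A'\|)+\|A'\|^2\|B-B'\|,
\]
together with $\|B-B'\|\le\|B\|\|B'\|\|\nabla_{yy}^2\fL_\lambda(\fx,\fy)-\nabla_{yy}^2\fL_\lambda(\fx,\fy_\lambda^*)\|$. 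This gives a Lipschitz constant of order $\lambda\rho\cdot\lambda\ell\cdot\frac{1}{\lambda\mu}+\lambda^2\ell^2\cdot\frac{\lambda\rho}{\lambda^2\mu^2}=\mathcal{O}(\lambda\bar\ell\kappa+\lambda\bar\ell\kappa^2)$. Adding the $\nabla_{xx}^2$ terms, each $\mathcal{O}(\lambda\rho)$, yields $C_2=\mathcal{O}(\lambda\bar\ell\kappa^2)$. The $\fw$-part is treated the same way with $g$: inverse norm at most $1/\mu$ and block norms at most $\ell$, giving a Lipschitz constant $\lambda(\rho+2\rho\ell/\mu+\ell^2\rho/\mu^2)=\mathcal{O}(\lambda\bar\ell\kappa^2)$.

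The main obstacle is matching the stated $C_1=\mathcal{O}(\lambda\bar\ell+\bar\ell\kappa^2)$, since the naive $\fw$-bound above carries a spurious factor $\lambda\kappa^2$. To remove it, I would not bound the $\fw$-Schur complement in isolation. Instead I would regroup so that the $\lambda g$ parts of the $\fy$-Schur complement partially cancel the $\fw$-Schur complement, exploiting that $\fy_\lambda^*$ and $\fy^*$ are $\mathcal{O}(\kappa^2/\lambda)$-close and that $\lambda\nabla_{yy}^2 g$ dominates $\nabla_{yy}^2\fL_\lambda$ up to $\mathcal{O}(1/\lambda)$ relative error. This would give a $\lambda$-free remainder of $\mathcal{O}(\bar\ell\kappa^2)$ plus an $\mathcal{O}(\lambda\bar\ell)$ term from the $\nabla_{xx}^2 g$ difference. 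If this refinement does not go through, the crude bound $C_1=\mathcal{O}(\lambda\bar\ell\kappa^2)$ still holds and suffices qualitatively for the subsequent analysis.
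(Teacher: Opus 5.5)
Your gradient bound and your $\fy$-part bound (giving $C_2=\OM(\lambda\bar\ell\kappa^2)$) use the same term-by-term Lipschitz argument as the paper. Your ``naive'' $\fw$-bound $\lambda(\rho+2\ell\rho/\mu+\ell^2\rho/\mu^2)\|\fw-\fy^*(\fx)\|$ is also correct. What fails is the proposed refinement to reach $C_1=\OM(\lambda\bar\ell+\bar\ell\kappa^2)$. The $\fy$-Schur complement in $\H$ does not depend on $\fw$, so at $\fy=\fy_\lambda^*(\fx)$ the error is exactly $\lambda[\Phi(\fx,\fw)-\Phi(\fx,\fy^*(\fx))]$, where $\Phi:=\nabla_{xx}^2 g-\nabla_{xy}^2 g[\nabla_{yy}^2 g]^{-1}\nabla_{yx}^2 g$. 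The cancellation you have in mind is what keeps $\nabla^2\fL_\lambda^*$ itself bounded. It needs both terms at matched points, so it does not absorb an independent perturbation of $\fw$.

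Here is a concrete counterexample. Take $d_x=d_y=1$, $f\equiv0$, and $g(x,y)=axy+\psi(y)$ with $\psi''(y)=\mu(2+\sin y)$ and $\mu\le a$. Then $g$ is $\mu$-strongly convex in $y$, all other constants in Assumption~\ref{assum:basic} are $\OM(a)$, and $\fy_\lambda^*=\fy^*$ with $\fL_\lambda^*\equiv0$. At a point with $y^*(x)=0$, the error equals $\lambda a^2|1/\psi''(w)-1/\psi''(0)|\approx\lambda a^2|w|/(4\mu)$. This is of order $\lambda\bar\ell\kappa|w|$, which is not $\OM(\lambda\bar\ell+\bar\ell\kappa^2)|w|$ once $\kappa\gg1$ and $\lambda\gg\kappa$ (and the algorithm takes $\lambda\ge\bar\ell\kappa^3/\epsilon$). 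So the lemma is false with the stated $C_1$, and no regrouping can prove it.

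The paper's proof gets $C_1=\lambda\rho+2\ell\rho/\mu+\ell^2\rho/\mu^2$ only through a dropped factor. It bounds the unscaled $g$-Schur-complement difference by $(2\ell\rho/\mu+\ell^2\rho/\mu^2)\|\fw-\fy^*(\fx)\|$, then adds it to $C_1$ without the factor $\lambda$ that this term carries in $\H$. Restoring that factor gives exactly your fallback, $C_1=\lambda(\rho+2\ell\rho/\mu+\ell^2\rho/\mu^2)=\OM(\lambda\bar\ell\kappa^2)$, and that is the version you should state.

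As you suspected, this is harmless downstream. The paper's explicit constant is $C_2=\rho+\lambda\rho+(1+\lambda)\ell\big(\frac{4(1+\lambda)\rho}{\lambda\mu}+\frac{4(1+\lambda)^2\ell\rho}{\lambda^2\mu^2}\big)$. This is at least $\lambda\rho+4\lambda\ell\rho/\mu+4\lambda\ell^2\rho/\mu^2$, which is at least the corrected $C_1$. Hence with $\tilde\epsilon\le C_H\sqrt{M\epsilon}/(2C_2)$ one still gets $C_1\tilde\epsilon+C_2\tilde\epsilon\le C_H\sqrt{M\epsilon}$ in Lemma~\ref{lem:exact_enough}.
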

Since $\fL_{\lambda}(\fx,\cdot)$ and $g(\fx,\cdot)$ are strongly convex according to Proposition~\ref{prop:strong_convex_L} and Assumption~\ref{assum:basic} (a),
it will be easy to find $\fy\approx \fy_{\lambda}^*(\fx)$ and $\fw\approx \fy^*(\fx)$
by applying the proper first- or second-order method for strongly convex optimization, i.e., the accelerated gradient descent method (AGD, Algorithm~\ref{alg:AGD}), on $\fL_{\lambda}(\fx,\cdot)$ and $g(\fx,\cdot)$, respectively.
This leads to $\g(\fx;\fy,\fw)\approx\nabla\fL_{\lambda}^*(\fx)$ and $\H(\fx;\fy,\fw)\approx\nabla^2\fL_{\lambda}^*(\fx)$ by Lemma~\ref{lem:estimators_error}.
Replacing $\nabla \fL^*_{\lambda}(\fx)$ and  $\nabla^2 \fL^*_{\lambda}(\fx)$ by $\g(\fx;\fy,\fw)$ and $\H(\fx;\fy,\fw)$ in the CRN update~\eqref{eq:cubic-CRN} leads to our FSBA method, as presented in Algorithm~\ref{alg:FSBA}.

\begin{algorithm}
\caption{ AGD $\left(h(\cdot), \boldsymbol{z}_0, K, \eta, \theta\right)$}
\label{alg:AGD}
\begin{algorithmic}[1]
    \STATE \textbf{Input:} $\tilde{\boldsymbol{z}}_0=\boldsymbol{z}_0$
    \FOR{$k=0$ to $K-1$}
         \STATE  $\fz_{k+1}=\tilde{\boldsymbol{z}}_k-\eta \nabla h\left(\tilde{\boldsymbol{z}}_k\right)$
    \STATE $\tilde{\boldsymbol{z}}_{k+1}=\boldsymbol{z}_{t+1}+\theta\left(\boldsymbol{z}_{k+1}-\boldsymbol{z}_k\right)$
    \ENDFOR
    \STATE \textbf{Output:} $\fz_K$
\end{algorithmic}
\end{algorithm}

\begin{algorithm}
\caption{Fully Second-Order Bilevel Approximation method (FSBA)}
\label{alg:FSBA}
\begin{algorithmic}[1]
\STATE \textbf{Input:} $\boldsymbol{x}_0 \in \mathbb{R}^{d_x}, \boldsymbol{y}_{-1}=\mathbf{0}$, $\boldsymbol{w}_{-1}=\boldsymbol{0}$, $T$, $\ell_1$, $\ell_2$, $\kappa_1$, $\kappa_2$, $\epsilon$, $M$, 
$
\left\{K_t^1\right\}_{t=0}^T, \left\{K_t^2\right\}_{t=0}^T$
\\[0.1cm]
\FOR{$t = 0, 1, \cdots, T-1$}
\STATE $\boldsymbol{w}_t=\operatorname{AGD}\left(g\left(\boldsymbol{x}_t, \cdot\right), \boldsymbol{w}_{t-1}, K_t^1, \frac{1}{\ell_1}, \frac{\sqrt{\kappa_1}-1}{\sqrt{\kappa_1}+1}\right)$
\STATE $\boldsymbol{y}_t=\operatorname{AGD}\left(\mathcal{L}_\lambda(\boldsymbol{x}_t, \cdot), \boldsymbol{y}_{t-1}, K_t^2, \frac{1}{\ell_2}, \frac{\sqrt{\kappa_2}-1}{\sqrt{\kappa_2}+1}\right)$
\\[0.1cm]
\STATE Compute $\g_t=\g(\fx_t;\fy_t,\fw_t)$ according to \eqref{eq:grad-est}.
\\[0.1cm]
\STATE Compute $\H_t=\H(\fx_t;\fy_t,\fw_t)$ according to \eqref{eq:Hess-est}.
\\[0.1cm]
\STATE 

$
 \fs^*_{t} =\argmin_{\fs\in\RB^{dx}}\left\{{\g_t}^{\top}{\fs}\! +\! \frac{1}{2}\fs^{\top}\H_t\fs+\frac{M}{6}\|\fs\|^3\right\}   
$
\\[0.1cm]
\STATE \textbf{If}  {$\left\|\fs^*_t\right\| \leq \frac{1}{2} \sqrt{\epsilon / M}$}~\textbf{then~break}
\\[0.1cm]
\STATE $\fx_{t+1}=\fx_t +\fs^*_t$
\ENDFOR
\STATE \textbf{Output:} $\hat{\fx} = \fx_{t+1}$
\end{algorithmic}
\end{algorithm}


\subsection{Convergence Analysis of FSBA}
In this section, we provide the convergence analysis of FSBA. 
The following lemma shows that once $\g(\fx_t;\fy_t,\fw_t)$ and $\H(\fx_t;\fy_t,\fw_t)$ are close enough to $\nabla \fL^*_{\lambda}(\fx_t)$ and $\nabla^2\fL^*_{\lambda}(\fx_t)$,
FSBA enjoys a similar convergence rate as applying the exact CRN method~ \eqref{eq:cubic-CRN}.
\begin{lemma}[Theorem 1, \citet{Luo22finding}]
\label{lem:inexact-cubic}
Under Assumption~\ref{assum:basic}, if we run Algorithm~\ref{alg:FSBA} with $M\geq \bar{\rho}$ and 
$T = \lceil192\left(\fL_{\lambda}^*(\fx_0)-\min_{\fx}\fL_{\lambda}^*(\fx)\right)\rceil\sqrt{M}\epsilon^{-3/2}$, and suppose the following condition
\begin{align}
\label{eq:inexact-cubic-iteration}
\begin{split}
    &\left\|\nabla \mathcal{L}_\lambda^*(\boldsymbol{x}_t)-\g(\fx_t;\fy_t,\fw_t)\right\| \leq C_g \epsilon, \\
    &\left\|\nabla^2 \mathcal{L}_\lambda^*(\boldsymbol{x}_t)-\H(\fx_t;\fy_t,\fw_t)\right\| \leq C_H \sqrt{M \epsilon},
\end{split}
\end{align}
hold with $C_g:=1/192$ and $C_H:=1/48$, 
then $\hat{\fx}$ is an $(\epsilon,\sqrt{M\epsilon})$-SOSP of $\fL^*_{\lambda}(\cdot)$.
\end{lemma}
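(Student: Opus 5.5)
We prove this by running the standard inexact cubic-regularized Newton analysis on $F:=\fL_\lambda^*$. By Proposition~\ref{prop:proxy_property}(c), $F$ has a $\bar\rho$-Lipschitz Hessian, so $M\ge\bar\rho$ is admissible. Write $\g_t,\H_t$ for the estimators, $\fs_t=\fs_t^*$, and let $\delta_g:=C_g\epsilon$ and $\delta_H:=C_H\sqrt{M\epsilon}$ denote the error levels in \eqref{eq:inexact-cubic-iteration}.

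We first collect the optimality conditions of the cubic subproblem:
\[
\g_t+\H_t\fs_t+\tfrac{M}{2}\|\fs_t\|\fs_t=\0,\qquad \H_t+\tfrac{M}{2}\|\fs_t\|\I\succeq \0 .
\]
Using these, we show a per-step descent inequality. The Hessian-Lipschitz upper bound gives $F(\fx_t+\fs_t)\le F(\fx_t)+\nabla F(\fx_t)^\top\fs_t+\tfrac12\fs_t^\top\nabla^2F(\fx_t)\fs_t+\tfrac{\bar\rho}{6}\|\fs_t\|^3$. We replace $\nabla F,\nabla^2F$ by $\g_t,\H_t$, paying $\delta_g\|\fs_t\|+\tfrac12\delta_H\|\fs_t\|^2$. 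The optimality conditions then give $\g_t^\top\fs_t+\tfrac12\fs_t^\top\H_t\fs_t+\tfrac M6\|\fs_t\|^3\le-\tfrac{M}{12}\|\fs_t\|^3$. Hence
\[
F(\fx_{t+1})\le F(\fx_t)-\tfrac{M}{12}\|\fs_t\|^3+\delta_g\|\fs_t\|+\tfrac12\delta_H\|\fs_t\|^2 .
\]
While the algorithm has not stopped we have $\|\fs_t\|>\tfrac12\sqrt{\epsilon/M}$. With $C_g=1/192$ and $C_H=1/48$, Young-type comparisons give $\delta_g\|\fs_t\|\le\tfrac{M}{48}\|\fs_t\|^3$ and $\tfrac12\delta_H\|\fs_t\|^2\le\tfrac{M}{48}\|\fs_t\|^3$. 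Each non-terminating step therefore decreases $F$ by at least $\tfrac{M}{24}\|\fs_t\|^3\ge c\,\epsilon^{3/2}/\sqrt M$. Summing over iterations and comparing with $F(\fx_0)-\min F$ shows that the break must occur before $T$ iterations. The constant $192$ in $T$ is chosen so that this count works out.

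Next, we certify the output $\hat\fx=\fx_t+\fs_t$ at the breaking iterate, where $\|\fs_t\|\le\tfrac12\sqrt{\epsilon/M}$. Hessian Lipschitzness gives $\|\nabla F(\fx_t+\fs_t)-\nabla F(\fx_t)-\nabla^2F(\fx_t)\fs_t\|\le\tfrac{\bar\rho}{2}\|\fs_t\|^2$. Combining this with the stationarity equation gives
\[
\|\nabla F(\hat\fx)\|\le \delta_g+\delta_H\|\fs_t\|+M\|\fs_t\|^2\le \epsilon .
\]
For the curvature condition,
\[
\nabla^2F(\hat\fx)\succeq\nabla^2F(\fx_t)-\bar\rho\|\fs_t\|\I\succeq \H_t-\delta_H\I-M\|\fs_t\|\I\succeq-\big(\tfrac32 M\|\fs_t\|+\delta_H\big)\I,
\]
which is at least $-\sqrt{M\epsilon}$. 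Thus $\hat\fx$ is an $(\epsilon,\sqrt{M\epsilon})$-SOSP. If instead the loop ran to $T$, the output would just be the last iterate; but the descent count shows this cannot happen without a break.

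The main obstacle is bookkeeping the constants, so that the inexactness terms are absorbed both in the descent inequality (which needs the lower bound on $\|\fs_t\|$) and in the final gradient bound (which needs the upper bound). I would fix the break threshold $\tfrac12\sqrt{\epsilon/M}$ first and then verify each inequality numerically with $C_g=1/192$ and $C_H=1/48$. Since this is exactly Theorem~1 of \citet{Luo22finding} applied to $F$, the only properties of $F$ we use are Hessian Lipschitzness (Proposition~\ref{prop:proxy_property}(c)) and boundedness below, which follows from Assumption~\ref{assum:basic}(f) together with Proposition~\ref{prop:proxy_property}(a).
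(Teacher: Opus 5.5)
Your proposal is correct and follows essentially the same route as the paper, which defers to Theorem~1 of \citet{Luo22finding}: every non-terminating step gives the descent $\frac{M}{24}\|\fs_t^*\|^3 > \epsilon^{3/2}/(192\sqrt{M})$, which is the same bound the paper later invokes in the proof of Theorem~\ref{thm:FSBA}, so the break must occur within $T$ iterations, and at the break $\fx_t+\fs_t^*$ is certified as an $(\epsilon,\sqrt{M\epsilon})$-SOSP exactly as you describe. Your constants check out (gradient bound $\tfrac{51}{192}\epsilon$, curvature bound $-\tfrac{37}{48}\sqrt{M\epsilon}$), and reading the output as $\fx_t+\fs_t^*$ at the breaking iterate is the interpretation the argument needs, since Algorithm~\ref{alg:FSBA} as printed tests the break before the update line.
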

 
In the following lemma, we show that the above condition can be achieved by properly choosing the iteration numbers $K_t^1$ and $K_t^2$ in the AGD subroutine.

\begin{lemma}
\label{lem:exact_enough}
Under Assumption~\ref{assum:basic}, let {\small $\Delta=\varphi\left(\boldsymbol{x}_0\right) -\varphi^{*}$, 
$\tilde{\epsilon}=\min \left\{\frac{C_g \epsilon}{4\lambda \ell},\frac{C_H \sqrt{M \epsilon}}{ 2 C_2} \right\}$, 
$R=\max\{\left\|\fy^*\left(\boldsymbol{x}_0\right)\right\|,  \left\|\fy_\lambda^*\left(\boldsymbol{x}_0\right)\right\|\}$}, 
if we run Algorithm~\ref{alg:FSBA} with  $M\geq \bar{\rho}$, $\kappa_1 = \kappa$, $\ell_1 = \ell$, $\kappa_2 = 3\kappa$, $\ell_2 = (1+\lambda)\ell$, $\lambda = \max \left\{ \bar{\ell} \kappa^2 / \Delta, \bar{\ell}\kappa^3 / \epsilon, \bar{\ell}\kappa^5 / \sqrt{M\epsilon}\right\}$,  and
{\small
\begin{align*}
    K_t^1\!=\!K_t^2\!=\!\begin{cases}
    &\!\!\!\!\!\!\left\lceil 2 \sqrt{\kappa_2} \log \left(\frac{\sqrt{\kappa_2+1}}{\tilde{\epsilon}}R\right)\right\rceil~~~~~~~~~~~~~~~~~~~~~~~~t=0 \\[0.3cm]
     &\! \!\!\!\!\!\left\lceil 2 \sqrt{\kappa_2} \log \left(\frac{\sqrt{\kappa_2+1}}{\tilde{\epsilon}}\left(\tilde{\epsilon}+4\kappa\left\|\fs^*_{t-1}\right\|\right)\right)\right\rceil~t\geq 1
 \end{cases},
\end{align*}}
\!\!then the condition \eqref{eq:inexact-cubic-iteration} in Lemma~\ref{lem:inexact-cubic} is satisfied. 
\end{lemma}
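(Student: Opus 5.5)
By Lemma~\ref{lem:estimators_error}, condition \eqref{eq:inexact-cubic-iteration} holds at iteration $t$ once
\[
\|\fw_t-\fy^*(\fx_t)\|\le\tilde\epsilon \quad\text{and}\quad \|\fy_t-\fy_\lambda^*(\fx_t)\|\le\tilde\epsilon .
\]
Indeed, the gradient error is then at most $3\lambda\ell\tilde\epsilon\le \tfrac34 C_g\epsilon$. The Hessian error is at most $(C_1+C_2)\tilde\epsilon$, and since $C_1=\OM(\lambda\bar\ell+\bar\ell\kappa^2)\le \OM(C_2)$ for $\lambda\ge 2\ell/\mu$, this is bounded by $C_H\sqrt{M\epsilon}$. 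If needed, the constant in $\tilde\epsilon$ absorbs the factor $C_1/C_2$, i.e.\ we use $\tilde\epsilon\le C_H\sqrt{M\epsilon}/(C_1+C_2)$. So the whole task reduces to proving these two tracking bounds by induction on $t$.

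\textbf{AGD contraction.} For a $\mu_h$-strongly convex, $\ell_h$-smooth $h$ with condition number $\kappa_h$, Nesterov's AGD with step $1/\ell_h$ and momentum $(\sqrt{\kappa_h}-1)/(\sqrt{\kappa_h}+1)$ gives
\[
\|\fz_K-\fz^*\|^2\le(\kappa_h+1)\big(1-1/\sqrt{\kappa_h}\big)^K\|\fz_0-\fz^*\|^2 .
\]
This follows from the standard Lyapunov bound $h(\fz_K)-h^*+\tfrac{\mu_h}{2}\|\fz_K-\fz^*\|^2\le(1-1/\sqrt{\kappa_h})^K\big(h(\fz_0)-h^*+\tfrac{\mu_h}2\|\fz_0-\fz^*\|^2\big)$ together with smoothness. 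Hence $K\ge 2\sqrt{\kappa_h}\log\big(\sqrt{\kappa_h+1}\,D/\tilde\epsilon\big)$ suffices to reduce an initial distance $D$ to $\tilde\epsilon$. We apply this with $h=g(\fx_t,\cdot)$ (condition number $\kappa_1=\kappa$) and with $h=\fL_\lambda(\fx_t,\cdot)$ (condition number $\kappa_2=3\kappa$, by Proposition~\ref{prop:strong_convex_L}). Since $\kappa_2\ge\kappa_1$, the common choice $K_t^1=K_t^2$ built from $\kappa_2$ works for both.

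\textbf{Base case $t=0$.} With warm starts $\fw_{-1}=\fy_{-1}=\0$, the initial distances are $\|\fy^*(\fx_0)\|$ and $\|\fy^*_\lambda(\fx_0)\|$, both at most $R$. This gives the first case of $K_0$.

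\textbf{Inductive step $t\ge1$.} Warm-starting from $\fw_{t-1}$, the triangle inequality gives
\[
\|\fw_{t-1}-\fy^*(\fx_t)\|\le\tilde\epsilon+\|\fy^*(\fx_{t-1})-\fy^*(\fx_t)\|\le\tilde\epsilon+\kappa\|\fs^*_{t-1}\|,
\]
using Proposition~\ref{prop:condinum_y_star} and $\fx_t-\fx_{t-1}=\fs^*_{t-1}$. For $\fy$ we need Lipschitz continuity of $\fy_\lambda^*$, which is the main step to check. Differentiating the optimality condition $\nabla_y\fL_\lambda(\fx,\fy_\lambda^*(\fx))=0$ gives
\[
\nabla\fy_\lambda^*=-\big[\nabla^2_{yy}\fL_\lambda\big]^{-1}\nabla^2_{yx}\fL_\lambda .
\]
Here $\|\nabla^2_{yx}\fL_\lambda\|\le(1+\lambda)\ell$ and $\nabla^2_{yy}\fL_\lambda\succeq\lambda\mu/2$, so the Lipschitz constant is at most $2(1+\lambda)\ell/(\lambda\mu)\le 4\kappa$ when $\lambda\ge1$. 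This explains the factor $4\kappa$ in the definition of $K_t$. Thus both initial distances are at most $\tilde\epsilon+4\kappa\|\fs^*_{t-1}\|$, the chosen $K_t$ drives both errors below $\tilde\epsilon$, and the induction closes.

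Finally, the choice of $\lambda$ ensures $\lambda\ge 2\ell/\mu$, so Propositions~\ref{prop:proxy_property} and \ref{prop:strong_convex_L} apply. The main obstacle is bookkeeping of constants: verifying the AGD distance bound with factor $\sqrt{\kappa+1}$, and checking that $C_1\tilde\epsilon$ is dominated as claimed in the Hessian error.
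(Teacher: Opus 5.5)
Your proposal is correct and follows the paper's proof essentially step for step. Both argue by induction that $\|\fw_t-\fy^*(\fx_t)\|\le\tilde\epsilon$ and $\|\fy_t-\fy_\lambda^*(\fx_t)\|\le\tilde\epsilon$, using the AGD contraction of Lemma~\ref{lem:AGD}, the zero initialization at $t=0$, and for $t\ge 1$ the warm start, the triangle inequality and the $\kappa$- and $4\kappa$-Lipschitz continuity of $\fy^*$ and $\fy_\lambda^*$ (Proposition~\ref{prop:condinum_y_star}, Lemma~\ref{lem:gradient_smooth_y_star}), and then plug the bounds into Lemma~\ref{lem:estimators_error}.

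The differences are minor. You re-derive the Lipschitz bound on $\fy_\lambda^*$ and the AGD rate instead of citing them, and you make explicit the $\kappa_1$-versus-$\kappa_2$ and $C_1$-versus-$C_2$ bookkeeping that the paper passes over. On the latter, no change to $\tilde\epsilon$ is needed: comparing the explicit constants in the proof of Lemma~\ref{lem:estimators_error} term by term gives $C_1\le C_2$ for every $\lambda>0$. This holds even after restoring the factor $\lambda$ on the $g$-Schur-complement term, which makes $C_1=\OM(\lambda\bar\ell\kappa^2)$. Hence $(C_1+C_2)\tilde\epsilon\le 2C_2\tilde\epsilon\le C_H\sqrt{M\epsilon}$. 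One inaccuracy: the prescribed $\lambda$ satisfies $\lambda\ge 2\ell/\mu$ only for $\epsilon$ small enough, which you, like the paper, assume tacitly rather than having it follow from the choice of $\lambda$.
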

Combining Lemma~\ref{lem:inexact-cubic} and \ref{lem:exact_enough}, we know that FSBA can find $(\epsilon,\sqrt{M\epsilon})$-SOSP of $\fL_{\lambda}^*(\cdot)$ with iteration complexities of $\tilde{\OM}(\epsilon^{-1.5})$.
In the following theorem, we formally present the first- and second-order oracle complexities of FSBA to find the SOSP of $\varphi(\cdot)$.
\begin{theorem}
\label{thm:FSBA}
Under Assumption~\ref{assum:basic}, run Algorithm~\ref{alg:FSBA} with the same setting as Lemma~\ref{lem:exact_enough}, let $M=\Omega( \bar{\rho})$ and  
$T = \Theta((\varphi(\fx_0)-\varphi^*)\sqrt{M}\epsilon^{-3/2})$, 
then
$\hat{\fx}$ is an $((\OM(\epsilon),\OM(\kappa^{2.5}\bar{\ell}^{0.5}\epsilon^{0.5}))$-SOSP of $\varphi(\cdot)$. 
In addition, 
the complexities of the first- and second-order oracle can be bounded by $\tilde{\OM}(\kappa^3\bar{\ell}^{0.5}\epsilon^{-1.5})$ and ${\OM}(\kappa^{2.5}\bar{\ell}^{0.5}\epsilon^{-1.5})$, respectively.
\end{theorem}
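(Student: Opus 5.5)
The plan is to combine Lemma~\ref{lem:inexact-cubic} and Lemma~\ref{lem:exact_enough} to get an SOSP of the proxy $\fL_{\lambda}^*(\cdot)$. Then Proposition~\ref{prop:proxy_property}(a) transfers it to $\varphi(\cdot)$. Finally, we count the AGD steps to get the oracle complexities.

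\textbf{Step 1: SOSP of the proxy.} With $M=\Omega(\bar\rho)$ we have $M\ge\bar\rho$. Lemma~\ref{lem:exact_enough} guarantees condition \eqref{eq:inexact-cubic-iteration} at every iteration, so Lemma~\ref{lem:inexact-cubic} gives that $\hat\fx$ is an $(\epsilon,\sqrt{M\epsilon})$-SOSP of $\fL_\lambda^*$. The horizon in Lemma~\ref{lem:inexact-cubic} involves $\fL_\lambda^*(\fx_0)-\min\fL_\lambda^*$. By Proposition~\ref{prop:proxy_property}(a) this is at most $\Delta+\OM(\bar\ell\kappa^2/\lambda)$, and the choice $\lambda\ge\bar\ell\kappa^2/\Delta$ makes it $\OM(\Delta)$. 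Hence $T=\Theta(\Delta\sqrt M\epsilon^{-3/2})$ suffices.

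\textbf{Step 2: transfer to $\varphi$.} By Proposition~\ref{prop:proxy_property}(a), $\|\nabla\varphi(\hat\fx)\|\le\epsilon+\OM(\bar\ell\kappa^3/\lambda)=\OM(\epsilon)$, using $\lambda\ge\bar\ell\kappa^3/\epsilon$. By Weyl's inequality, $\lambda_{\min}(\nabla^2\varphi(\hat\fx))\ge-\sqrt{M\epsilon}-\OM(\bar\ell\kappa^5/\lambda)\ge-\OM(\sqrt{M\epsilon})$, using $\lambda\ge\bar\ell\kappa^5/\sqrt{M\epsilon}$. Taking $M=\Theta(\bar\rho)=\Theta(\bar\ell\kappa^5)$ gives the curvature tolerance $\OM(\kappa^{2.5}\bar\ell^{0.5}\epsilon^{0.5})$.

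\textbf{Step 3: oracle counts.} Each iteration performs one evaluation of $\H_t$, i.e. $\OM(1)$ Hessian oracles. The number of iterations is at most $T=\OM(\Delta\sqrt{\bar\ell\kappa^5}\epsilon^{-3/2})=\OM(\kappa^{2.5}\bar\ell^{0.5}\epsilon^{-1.5})$, treating $\Delta$ as a constant. This gives the second-order count.

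For first-order calls, the total is $\sum_t(K_t^1+K_t^2)$. Each $K_t=\OM(\sqrt{\kappa}\log(\cdot))$, with logarithmic argument $\sqrt{\kappa_2+1}(\tilde\epsilon+4\kappa\|\fs^*_{t-1}\|)/\tilde\epsilon$. This is the main (though mild) obstacle: $\|\fs^*_{t-1}\|$ is not a priori bounded uniformly.

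I would bound $\sum_t\log(1+4\kappa\|\fs^*_{t-1}\|/\tilde\epsilon)$ via Jensen/concavity of $\log$ by $T\log(1+4\kappa\sum_t\|\fs^*_t\|/(T\tilde\epsilon))$. I would then control $\sum_t\|\fs_t^*\|$ using the descent inequality from the inexact CRN analysis (each step decreases $\fL_\lambda^*$ by $\Omega(M\|\fs_t^*\|^3)$ up to small errors). This gives $\sum\|\fs_t^*\|^3\lesssim\Delta/M$, so $\sum\|\fs_t^*\|\le T^{2/3}(\Delta/M)^{1/3}$ by Hölder. Alternatively, a crude polynomial bound on $\|\fs_t^*\|$ from $\|\g_t\|$ and $\|\H_t\|$ suffices, since it only enters inside a logarithm.

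Since $\tilde\epsilon$ and $\lambda$ are polynomial in $\epsilon^{-1},\kappa,\bar\ell$, all logs are polylogarithmic. The first-order complexity is then $\tilde\OM(\sqrt\kappa\,T)=\tilde\OM(\kappa^3\bar\ell^{0.5}\epsilon^{-1.5})$. The $t=0$ term adds only $\tilde\OM(\sqrt\kappa)$.
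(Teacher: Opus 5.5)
Your proposal is correct and matches the paper's proof step for step. Lemmas~\ref{lem:inexact-cubic} and~\ref{lem:exact_enough} give an $(\epsilon,\sqrt{M\epsilon})$-SOSP of $\fL_\lambda^*$; Proposition~\ref{prop:proxy_property}(a), with the three lower bounds on $\lambda$, transfers this to $\varphi$ and gives $\fL_\lambda^*(\fx_0)-\min\fL_\lambda^*=\OM(\Delta)$; and the AGD count is controlled by concavity of $\log$ together with the descent bound $\sum_t\|\fs_t^*\|^3\lesssim\Delta/M$. The only difference is cosmetic: the paper cubes the log arguments via $(a+b)^3\le 8(a^3+b^3)$ and applies AM--GM to $\sum_t\|\fs_t^*\|^3$ directly, whereas you apply Jensen to $\sum_t\|\fs_t^*\|$ and then H\"older, which is equivalent.
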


\subsection{An Inexact Version of FSBA}
\label{sec:IFSBA}
Both the computation of $\H(\fx_t;\fy_t,\fw_t)$ (line 6) and solving the cubic-regularized subproblem (line 7) in Algorithm~\ref{alg:FSBA} require the explicit construction of Hessian and the inverse of regularized Hessian, 
which may limit the application of FSBA in large-scale problems when the problem dimension is extremely large.

In this section, we propose an inexact variant of FSBA.
Instead of accessing $\H(\fx_t;\fy_t,\fw_t)$ directly according to \eqref{eq:Hess-est}, we compute $\C(\fx_t;\fy_t,\fw_t)$
\begin{align}
\label{eq:Cheby-est}
\begin{split}
   &\C(\fx_t;\fy_t,\fw_t) \\
   &\!\!:= \! \nabla_{x x}^2 f\!\left(\boldsymbol{x}_t, \fy_t\right)
  \! +\!\lambda\big(\nabla_{x x}^2 g\!\left(\boldsymbol{x}_t, \fy_t\right)\!-\!\nabla_{x x}^2 g\!\left(\boldsymbol{x}_t, \fw_t\right)\!\big)\\
   &~~~~~+\lambda \nabla_{x y}^2 g\left(\boldsymbol{x}_t, \fw_t\right) \C_{1,t}\nabla_{y x}^2 g\left(\boldsymbol{x}_t, \fw_t\right)\\
   &~~~~~-\nabla_{x y}^2 \mathcal{L}_\lambda\left(\boldsymbol{x}_t, \fy_t\right)\C_{2,t}\nabla_{y x}^2 \mathcal{L}_\lambda\!\left(\boldsymbol{x}_t, \fy_t\right),
\end{split}
\end{align}
which replaces $\nabla^2_{yy}g(\fx_t,\fw_t)^{-1}$ and  $\nabla_{yy}^{2}\fL_{\lambda}(\fx_t,\fy_t)^{-1}$ by their Chebyshev Polynomials approximations $\C_{1,t}$ and $\C_{2,t}$. 
In addition, we do not solve the cubic subproblem by regularized Newton step, but instead, using a gradient-type method to approximately solve 
\begin{align*}
    \min_{\fs\in\RB^{d_x}} m(\fs) =  \fs^{\top}\g_t + \frac{1}{2}\fs^{\top}\C_t\fs +\frac{M}{6}\|\fs\|^3,
\end{align*}
whose gradient $\nabla m(\fs)= \g_t + \C_t\fs + \frac{M}{2}\|\fs\|\fs$ can be computed with only gradients and Hessian-vector products of $f$ and $g$.
We present the inexact fully second-order bilevel approximation method (ISFBA) in Algorithm~\ref{alg:IFSBA}.
The detailed implementation of constructing $\C_{1,t}$, $\C_{2,t}$ in $\C_t$ (line 6) and the sub-problem solvers (line 7 and line 10) are presented in Appendix~\ref{app:ifsba}.

\begin{algorithm}
\caption{Inexact Fully Second-Order Bilevel Approximation method (IFSBA)}
\label{alg:IFSBA}
    \begin{algorithmic}[1]
     \STATE \textbf{Input:} $\boldsymbol{x}_0 \in \mathbb{R}^{d_x},  \boldsymbol{y}_{-1}=\mathbf{0}$, $\ell_1$, $\ell_2$, $\kappa_1$, $\kappa_2$, $\epsilon$, $M$, $\boldsymbol{w}_{-1}=\boldsymbol{0}, T, \left\{K_t^1\right\}_{t=0}^T, \left\{K_t^2\right\}_{t=0}^T$, 
\FOR{$t = 0, 1, \cdots, T-1$}
\STATE $\boldsymbol{w}_t=\operatorname{AGD}\left(g\left(\boldsymbol{x}_t, \cdot\right), \boldsymbol{w}_{t-1}, K_t^1, \frac{1}{\ell_1}, \frac{\sqrt{\kappa_1}-1}{\sqrt{\kappa_1}+1}\right)$
\\[0.1cm]
\STATE $\boldsymbol{y}_t=\operatorname{AGD}\left(\mathcal{L}_\lambda(\boldsymbol{x}_t, \cdot), \boldsymbol{y}_{t-1}, K_t^2, \frac{1}{\ell_2}, \frac{\sqrt{\kappa_2}-1}{\sqrt{\kappa_2}+1}\right)$\
\STATE Compute $\g_t=\g(\boldsymbol{x}_t;\boldsymbol{y}_t,\boldsymbol{w}_t)$ according to \eqref{eq:grad-est}
\\[0.1cm]
\STATE Compute $\C_t=\C(\boldsymbol{x}_t;\boldsymbol{y}_t,\boldsymbol{w}_t)$ according to \eqref{eq:Cheby-est}.
\\[0.1cm]
\STATE $(\fs_t, \Delta_t) = \operatorname{Cubic\text{-}Solver}(\g_t, {\C}_t, \sigma, \mathcal{K}(\epsilon, \delta'))$
\\[0.1cm]
    \STATE $\boldsymbol{x}_{t+1} = \boldsymbol{x}_t + \boldsymbol{s}_t$
\\[0.1cm]
\STATE \textbf{If} {$\Delta_t > -\dfrac{\epsilon^3}{128M}$} \textbf{then}
\\[0.1cm]
        \STATE $\quad \hat{\fs} = \operatorname{Final\text{-}Cubic\text{-}Solver}(\g_t, {\C}_t, \epsilon)$
        \\[0.1cm]
        \STATE $\quad \boldsymbol{x}_{t+1} = \boldsymbol{x}_t + \hat{\fs}$
        \\[0.1cm]
        \STATE $\quad$ \textbf{break}
    \STATE \textbf{end If}
\ENDFOR
\STATE \textbf{Output:} $\hat{\boldsymbol{x}} = \boldsymbol{x}_{t+1}$
\end{algorithmic}
\end{algorithm}

\section{Lazy Fully Second-Order Bilevel Approximation Algorithm with Better Computational Complexity}
\label{sec:LFSBA}
In the previous section, we propose FSBA with $\tilde{\OM}(\epsilon^{-1.5})$ oracle complexity for non-convex strongly convex bilevel optimization, which is faster than existing first-order methods.
However, the second-order oracle always leads to a heavier computational complexity than the first-order oracle.
We make the following assumption to differentiate the computational complexity of first- and second-order oracles by following~\citet{doikov2023second}.
\begin{assumption}
\label{assum:computational complexity}  
We count the computational complexity of first-order oracle of $f$ and $g$, i.e., $\nabla_x f(\fx,\fy)$, $\nabla_y f(\fx,\fy)$, $\nabla_x g(\fx,\fy)$, $\nabla_y g(\fx,\fy)$ and HVPs computed via automatic differentiation, as $N$.
We count the computational complexity of the second-order oracle of $f$ and $g$, i.e., $\nabla^2_{xx}f(\fx,\fy)$, $\nabla^2_{xy}f(\fx,\fy)$, $\nabla^2_{xx}g(\fx,\fy)$, $\nabla^2_{yy} g(\fx,\fy)$, $\nabla^2_{xy}g(\fx,\fy)$, as $dN$,
where $d := \max\{d_x,d_y\}$ denotes the problem dimension.
\end{assumption}
The computational complexity of FSBA can be bounded by 
\begin{align}
\label{eq:C(FSBA)}
\begin{split}
&\text{Cost(FSBA)}  \\
&=N \cdot \# \text{1st-order oracle} + Nd \cdot\#\text{2nd-order oracle}\\
&= \tilde{\OM}\big(N(\kappa^{0.5}+d)\kappa^{2.5}\bar{\ell}^{0.5}\epsilon^{-1.5}\big).
\end{split}
\end{align}
By Theorem~\ref{thm:imcn_main_complexity}, IFSBA attains an $(\epsilon,\sqrt{\epsilon})$-SOSP with the following cost:
\begin{align}
\label{eq:C(IFSBA)}
\begin{split}
&\text{Cost(IFSBA)}  \\
&= N \cdot \# \text{gradient oracle} + N \cdot \# \text{HVP oracle} \\
&= \tilde{\OM}\big(
N(\kappa^3\bar{\ell}^{0.5}\epsilon^{-1.5}
+
\kappa^{3.5}\bar{\ell}\epsilon^{-2})
\big).
\end{split}
\end{align}
\subsection{The Lazy FSBA Method and its Convergence Analysis}
In this section, our aim is to reduce the computational complexity of FSBA.
At each iteration of FSBA, it takes $\tilde{\OM}(\kappa^{0.5}N)$ computational complexity to obtain $\fw_t$ and $\fy_t$ by AGD, and $\OM(dN)$ computational complexity to update $\fx_t$ by the inexact CRN step.
When $d\gg \kappa^{0.5}$ such that the computational complexity of a second-order oracle is large, it is expensive to call the second-order oracle for every iteration.
Motivated by the lazy Hessian mechanism~\cite{doikov2023second,doikov2023first,chen2024second,liu2025enhanced,chen2025faster}, we propose the lazy fully second-order bilevel approximation method (LFSBA), which computes the approximate Hessian only at the snapshot point and reuses it for the next $m$ iterations.
We formally present LFSBA method in Algorithm~\ref{alg:LFSBA}.
\begin{algorithm}
\caption{Lazy Fully Second-order Bilevel Approximation method (LFSBA)}
\label{alg:LFSBA}
    \begin{algorithmic}[1]
     \STATE \textbf{Input:} $\boldsymbol{x}_0 \in \mathbb{R}^{d_x}$, $\boldsymbol{y}_{-1}=\mathbf{0}$,$\boldsymbol{w}_{-1}=\boldsymbol{0}$, $\ell_1$, $\ell_2$, $\kappa_1$, $\kappa_2$, $m$, $\epsilon$, $M$, $T$, $\left\{K_t^1\right\}_{t=0}^T, \left\{K_t^2\right\}_{t=0}^T$.
\vskip 0.1cm
\FOR{$t = 0, 1, \cdots, T-1$}
\STATE $\boldsymbol{w}_t=\operatorname{AGD}\left(g\left(\boldsymbol{x}_t, \cdot\right), \boldsymbol{w}_{t-1}, K_t^1, \frac{1}{\ell_1}, \frac{\sqrt{\kappa_1}-1}{\sqrt{\kappa_1}+1}\right)$
\STATE $\boldsymbol{y}_t=\operatorname{AGD}\left(\mathcal{L}_\lambda(\boldsymbol{x}_t, \cdot), \boldsymbol{y}_{t-1}, K_t^2, \frac{1}{\ell_2}, \frac{\sqrt{\kappa_2}-1}{\sqrt{\kappa_2}+1}\right)$
\STATE Compute $\g_t= \g(\fx_t;\fy_t,\fw_t)$ according to \eqref{eq:grad-est}\vskip 0.2cm
\STATE \textbf{if} $t\%m = 0$ \vskip 0.1cm
\STATE \quad Set $\tilde{\fx}=\fx_t$ \vskip 0.1cm
\STATE \quad Compute $\tilde{\H}= \H(\fx_t;\fy_t,\fw_t)$ according to \eqref{eq:Hess-est} \vskip 0.1cm
\STATE
$\fs_t^* = \argmin_{\fs\in\RB^{dx}}\big\{\g_t^{\top} \fs + \frac{1}{2}\fs^{\top}\tilde{\H}\fs + \frac{M}{6}\|\fs\|^3\big\}$
\vskip 0.1cm
\STATE
$\fx_{t+1}=\fx_t + \fs^*_t$
\vskip 0.2cm
\textbf{If} 
$\epsilon \geq \frac{1}{M}\big(\frac{288}{287}\big)^2
        \big(\frac{M+2 \bar{\rho}}{\sqrt{2}} \left\|\fs_t^*\right\|+\bar{\rho}\|\tilde{\fx}-\fx_{t}\|\big)^2$
\STATE ~\textbf{then~break}
\ENDFOR
\STATE \textbf{Ouput:} $\hat{\fx} = \fx_{t+1}$
\end{algorithmic}
\end{algorithm}

Now, we study the convergence analysis of LFSBA, which updates according to the following direction
\begin{align*}
    \fs^*_{t}= \argmin_{\fs\in\RB^{d_x}} \fs^{\top}\g_t + \frac{1}{2}\fs^{\top}\H_{\pi(t)}\fs + \frac{M}{6}\|\fs\|^3,
\end{align*}
where we denote $\g_t:=\! \g(\fx_t;\fy_t,\fw_t)$, $\H_t := \!\H(\fx_{t};\fy_t,\fw_t)$, and $\pi(t): = t-t~\text{mod}~ m$.
The following Lemma shows that once $\g_t$ and $\H_{\pi(t)}$ are good approximations of $\nabla \fL_{\lambda}^*(\fx_t)$ and $\nabla^2 \fL_{\lambda}^*(\fx_{\pi(t)})$, then LFSBA enjoys a similar descent property as the lazy cubic-regularized Newton method~\cite{doikov2023second}.
\begin{lemma}
    \label{lem:pre-LFSBA}
    Under Assumption~\ref{assum:basic}, let $M\geq \bar{\rho}$ and suppose
the following conditions
\begin{align}
\label{eq:condi-inexact-lazy-cubic}
\begin{split}
    &\left\|\nabla \mathcal{L}_\lambda^*(\boldsymbol{x}_t) - \g_t\right\| \leq \bar{C}_g \epsilon, \\
    &\left\|\nabla^2 \mathcal{L}_\lambda^*(\boldsymbol{x}_{\pi(t)}) - \H_{\pi(t)}\right\| \leq \bar{C}_H \sqrt{M \epsilon}.
\end{split}
\end{align}
hold with $\bar{C}_g:=1/576, \bar{C}_H:=1/288$ in Algorithm~\ref{alg:LFSBA}, denoting {\small$\gamma(\boldsymbol{x}) := \max \left\{\frac{1}{987M^2} \xi\left(\boldsymbol{x}\right)^3 , \frac{1}{120\sqrt{3M}} \left\|\nabla \mathcal{L}_\lambda^*\left(\boldsymbol{x}\right)\right\|^{3 / 2} \right\}$}, 
then it holds that
\begin{align}
\label{eq:cubic_progress_one_step}
\begin{split}
    \!\!\!\!&\mathcal{L}_\lambda^*(\boldsymbol{x}_{t})
    -\mathcal{L}_\lambda^*(\boldsymbol{x}_{t+1})
    \geq \\
    &~~\gamma(\boldsymbol{x}_{t+1}) \!+\!\frac{M}{72} \|\fx_{t+1}-\fx_t\|^3 \! -\!\frac{13 \bar{\rho}^3}{M^2}\|\boldsymbol{x}_{\pi(t)}\!-\!\boldsymbol{x}_t\|^3.
\end{split}
\end{align}
\end{lemma}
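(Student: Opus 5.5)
The plan is to follow the lazy cubic-regularized Newton analysis of \citet{doikov2023second}, carrying two extra error terms: the inexact gradient $\g_t$ and the inexact snapshot Hessian $\H_{\pi(t)}$.

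Write $\fs:=\fs_t^*=\fx_{t+1}-\fx_t$ and $r:=\|\fs\|$. Since $\fs_t^*$ is the global minimizer of the cubic model, it satisfies
\[
\g_t+\H_{\pi(t)}\fs+\frac{M}{2}r\fs=\0
\qquad\text{and}\qquad
\H_{\pi(t)}+\frac{M}{2}r\I\succeq 0 .
\]
Define the total Hessian error
\[
\mathbf{E}_t:=\nabla^2\fL_\lambda^*(\fx_t)-\H_{\pi(t)} .
\]
By Proposition~\ref{prop:proxy_property}(c) and condition \eqref{eq:condi-inexact-lazy-cubic},
\[
\|\mathbf{E}_t\|\le \bar\rho\|\fx_t-\fx_{\pi(t)}\|+\bar C_H\sqrt{M\epsilon}.
\]
The gradient error satisfies $\|\nabla\fL_\lambda^*(\fx_t)-\g_t\|\le \bar C_g\epsilon$.

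\textbf{Function decrease.} Use the $\bar\rho$-Hessian-Lipschitz upper bound
\[
\fL^*_\lambda(\fx_{t+1})\le \fL^*_\lambda(\fx_t)+\nabla\fL^*_\lambda(\fx_t)^\top\fs+\tfrac12\fs^\top\nabla^2\fL^*_\lambda(\fx_t)\fs+\tfrac{\bar\rho}{6}r^3 .
\]
Replace the exact derivatives by $\g_t$ and $\H_{\pi(t)}$ plus their errors. Then apply the optimality identity in the form $\g_t^\top\fs=-\fs^\top\H_{\pi(t)}\fs-\frac M2 r^3$, together with the PSD condition. This gives
\[
\fL^*_\lambda(\fx_t)-\fL^*_\lambda(\fx_{t+1})\ge \Big(\tfrac{M}{12}-\tfrac{\bar\rho}{6}\Big)r^3-\bar C_g\epsilon r-\tfrac12\|\mathbf{E}_t\|r^2 .
\]
The term $\frac{M}{12}-\frac{\bar\rho}{6}$ would be negative if only $M\ge\bar\rho$ is used. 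I would instead keep the $\frac M4 r^3$ coming from $\fs^\top(\H_{\pi(t)}+\frac M2 r\I)\fs\ge 0$, which gives a coefficient of $\frac M4-\frac{\bar\rho}{6}\ge \frac{M}{12}$.

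The error terms are then absorbed by Young's inequality:
\begin{itemize}
\item $\bar\rho\|\fx_t-\fx_{\pi(t)}\|r^2\le \epsilon_0 Mr^3+c\,\bar\rho^3\|\fx_t-\fx_{\pi(t)}\|^3/M^2$;
\item $\sqrt{M\epsilon}\,r^2$ and $\epsilon r$ are bounded by a small multiple of $Mr^3$ plus multiples of $\epsilon^{3/2}/\sqrt M$.
\end{itemize}
The $\epsilon^{3/2}/\sqrt M$ pieces are later dominated by $\gamma(\fx_{t+1})$, or by $Mr^3$ when $r\gtrsim\sqrt{\epsilon/M}$. This produces the $\frac{M}{72}r^3$ and $-\frac{13\bar\rho^3}{M^2}\|\fx_{\pi(t)}-\fx_t\|^3$ terms.

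\textbf{Bounding $\gamma(\fx_{t+1})$ by $r^3$ plus errors.}

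For the gradient, use
\[
\nabla\fL^*_\lambda(\fx_{t+1})=\big[\nabla\fL^*_\lambda(\fx_{t+1})-\nabla\fL^*_\lambda(\fx_t)-\nabla^2\fL^*_\lambda(\fx_t)\fs\big]+(\nabla\fL^*_\lambda(\fx_t)-\g_t)+\mathbf{E}_t\fs-\tfrac M2 r\fs .
\]
This gives
\[
\|\nabla\fL^*_\lambda(\fx_{t+1})\|\le \tfrac{M+\bar\rho}{2}r^2+\|\mathbf{E}_t\|r+\bar C_g\epsilon .
\]

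For the curvature, use
\[
\nabla^2\fL^*_\lambda(\fx_{t+1})\succeq \H_{\pi(t)}-\big(\|\mathbf{E}_t\|+\bar\rho r\big)\I\succeq-\big(\tfrac M2 r+\bar\rho r+\|\mathbf{E}_t\|\big)\I .
\]
Hence $\xi(\fx_{t+1})\le (\frac M2+\bar\rho)r+\|\mathbf{E}_t\|$.

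Cubing the curvature bound, or raising the gradient bound to the $3/2$ power, and using $(a+b+c)^p\le 3^{p-1}(a^p+b^p+c^p)$, bounds $\gamma(\fx_{t+1})$ by
\[
c_1 Mr^3+c_2\frac{\bar\rho^3}{M^2}\|\fx_t-\fx_{\pi(t)}\|^3+c_3\frac{\epsilon^{3/2}}{\sqrt M}.
\]
The constants $987$ and $120\sqrt3$ are chosen so that $c_1$ is a fraction of $M/12$. This lets us trade part of the $r^3$ decrease for $\gamma(\fx_{t+1})$.

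\textbf{Main obstacle.} The hard step is the residual $\epsilon^{3/2}/\sqrt M$ terms, which carry no $r$ factor and so cannot be absorbed when $r$ is tiny. I would handle them with the same device as Lemma~\ref{lem:inexact-cubic}: the small constants $\bar C_g=1/576$ and $\bar C_H=1/288$ make these terms negligible relative to $\gamma(\fx_{t+1})$ whenever $\gamma$ is not already below the $\epsilon$-level. When $\gamma$ is below that level, the iterate is near-stationary, and that regime is exactly what the break test in Algorithm~\ref{alg:LFSBA} is designed to catch. Fitting all constants to yield exactly $1/72$ and $13$ is then tedious but routine bookkeeping.
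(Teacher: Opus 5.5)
\paragraph{Where the proposal falls short.} Your skeleton is the paper's:
\begin{itemize}
\item the optimality conditions of the cubic model;
\item the $\bar{\rho}$-Hessian-Lipschitz upper model, giving $-(\frac{M}{4}-\frac{\bar{\rho}}{6})\|\boldsymbol{s}_t^*\|^3$;
\item the Hessian error bound $\bar{\rho}\|\boldsymbol{x}_{\pi(t)}-\boldsymbol{x}_t\|+\bar{C}_H\sqrt{M\epsilon}$;
\item the same bounds on $\|\nabla\mathcal{L}_\lambda^*(\boldsymbol{x}_{t+1})\|$ and $\xi(\boldsymbol{x}_{t+1})$.
\end{itemize}
However, the step you call the main obstacle is a genuine gap, not routine bookkeeping. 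The additive $\epsilon$-terms cannot be absorbed without an extra hypothesis, and as literally stated the inequality fails. Take a snapshot step with $\mathbf{g}_t=\mathbf{0}$, $\mathbf{H}_t\succeq 0$ and $0<\|\nabla\mathcal{L}_\lambda^*(\boldsymbol{x}_t)\|\le\bar{C}_g\epsilon$. Then $\boldsymbol{s}_t^*=\mathbf{0}$, so the left side is $0$ while the right side is $\gamma(\boldsymbol{x}_t)>0$.

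\paragraph{How the paper closes it.} The paper takes the missing hypothesis from the break test of Algorithm~\ref{alg:LFSBA}. On any iteration where the test does not fire,
\[
\epsilon<\frac{1}{M}\Big(\frac{288}{287}\Big)^2\Big(\frac{M+2\bar{\rho}}{\sqrt{2}}\|\boldsymbol{s}_t^*\|+\bar{\rho}\|\boldsymbol{x}_{\pi(t)}-\boldsymbol{x}_t\|\Big)^2 .
\]
The paper also checks that this holds whenever $\boldsymbol{x}_{t+1}$ is not an $(\epsilon,\sqrt{M\epsilon})$-SOSP. It does so with exactly your two bounds and $1-\bar{C}_g-\bar{C}_H/2=1-\bar{C}_H=287/288$.

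This bound is then substituted for every occurrence of $\epsilon$:
\begin{itemize}
\item in $\bar{C}_g\epsilon\|\boldsymbol{s}_t^*\|$;
\item in $\bar{C}_H\sqrt{M\epsilon}\|\boldsymbol{s}_t^*\|^2\le\frac{\bar{C}_H}{2}(\epsilon+M\|\boldsymbol{s}_t^*\|^2)\|\boldsymbol{s}_t^*\|$;
\item in the $\epsilon$-terms of the gradient and curvature bounds at $\boldsymbol{x}_{t+1}$.
\end{itemize}
So no $\epsilon^{3/2}/\sqrt{M}$ residual ever appears. Only products of $\|\boldsymbol{s}_t^*\|$ and $\|\boldsymbol{x}_{\pi(t)}-\boldsymbol{x}_t\|$ remain, and Young's inequality sorts these into the $\frac{M}{72}$ and $13$ terms.

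\paragraph{Why your proposed resolution does not work as written.}
\begin{itemize}
\item Saying the residuals are ``dominated by $\gamma(\boldsymbol{x}_{t+1})$'' is not an argument, because $\gamma(\boldsymbol{x}_{t+1})$ is what you must produce on the right-hand side. To make it one, you must assume $\boldsymbol{x}_{t+1}$ is not an SOSP and deduce $\gamma(\boldsymbol{x}_{t+1})\ge\epsilon^{3/2}/(987\sqrt{M})$. You then have to prove the stronger bound with $(1+\eta)\gamma(\boldsymbol{x}_{t+1})$ and re-check the constants.
\item Your claim that the small-$\gamma$ regime ``is exactly what the break test catches'' is backwards. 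The test firing implies $\boldsymbol{x}_{t+1}$ is an SOSP, but a non-firing test does not make $\gamma(\boldsymbol{x}_{t+1})$ large. On such iterations your argument yields nothing. The paper's hypothesis (test not fired) is the weaker one and covers every iteration on which the algorithm continues.
\item Your fallback, ``absorbed by $Mr^3$ when $r\gtrsim\sqrt{\epsilon/M}$'', overlooks that the test may fail to fire because $\|\boldsymbol{x}_{\pi(t)}-\boldsymbol{x}_t\|$ is large while $r$ is tiny. In that case the residual must be charged to the $-\frac{13\bar{\rho}^3}{M^2}\|\boldsymbol{x}_{\pi(t)}-\boldsymbol{x}_t\|^3$ term. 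The paper's substitution handles this automatically.
\end{itemize}
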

The following theorem indicates that by properly choosing the iteration steps in AGD subroutine  and the regularization parameter, LSFBA converges at a similar convergence rate as the lazy cubic-regularized-Newton method.

\begin{theorem}
\label{thm:Lazy_Hessian_calls}  
Under Assumption~\ref{assum:basic}, let {\small $\!\Delta\!=\varphi\left(\boldsymbol{x}_0\right)\! -\!\varphi^{*}$, \\
$\!\tilde{\epsilon}\!=\!\min \{\frac{\bar{C}_g \epsilon}{4\lambda \ell}, \frac{\bar{C}_H \sqrt{M \epsilon}}{ 2 C_2} \}$}, and {\small
$R \!=\!\max\{\big\|\fy^*(\boldsymbol{x}_0)\big\|,  \left\|\fy_\lambda^*\left(\boldsymbol{x}_0\right)\right\|\}$}, if we run Algorithm \ref{alg:LFSBA} with 
{\small $M \!=\! \Omega(m\bar{\rho})$, 
$T = \Theta\big(\Delta\sqrt{M}\epsilon^{-3/2}\big)$, $\lambda = \max \left\{ \bar{\ell} \kappa^2 / \Delta, \bar{\ell}\kappa^3 / \epsilon, \bar{\ell}\kappa^5 / \sqrt{M\epsilon}\right\}$,  $\kappa_1 = \kappa$, $\ell_1 = \ell$, $\kappa_2 = 3\kappa$, $\ell_2 = (1+\lambda)\ell$
}, and
{\small
\begin{align*}
K_t^1\! =\! K_t^2\! =\!\begin{cases}\!
&\!\!\!\!\!\!\!\left\lceil 2 \sqrt{\kappa_2} \log \left(\frac{\sqrt{\kappa_2+1}}{\tilde{\epsilon}}R\right)\!\right\rceil~~~~~~~~~~~~~~~~~~~~~~~~~t=0\\[0.3cm]
&\!\!\!\!\!\!\!\left\lceil 2 \sqrt{\kappa_2} \log \left(\frac{\sqrt{\kappa_2+1}}{\tilde{\epsilon}}\left(\tilde{\epsilon}+4\kappa\left\|\fs^*_{t-1}\right\|\right)\right)\!\right\rceil~t\geq 1
 \end{cases},
\end{align*}
}
\!\!then the output $\hat{\boldsymbol{x}}$ is an $(\OM(\epsilon), \OM(\kappa^{2.5}\bar{\ell}^{0.5}m^{0.5}\epsilon^{0.5}))$-SOSP of $\varphi(\boldsymbol{x})$. 
Furthermore, the first-order and second-order oracle complexities of Algorithm~\ref{alg:LFSBA} can be bounded by $\tilde{\OM}(\kappa^3\bar{\ell}^{0.5}m^{0.5}\epsilon^{-1.5})$ and ${\OM}(1+\kappa^{2.5}\bar{\ell}^{0.5}m^{-0.5}\epsilon^{-1.5})$, respectively.
\end{theorem}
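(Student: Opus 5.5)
The proof has three parts. First, an inductive argument mirroring Lemma~\ref{lem:exact_enough} shows that the inexactness conditions \eqref{eq:condi-inexact-lazy-cubic} hold at every iteration. Second, Lemma~\ref{lem:pre-LFSBA} is summed over each epoch of length $m$ to get a telescoping descent bound. Third, the break criterion is translated into SOSP guarantees for $\fL_\lambda^*$ and then for $\varphi$ via Proposition~\ref{prop:proxy_property}.

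\textbf{Step 1 (inexactness).} Use the AGD contraction on the strongly convex problems $g(\fx_t,\cdot)$ and $\fL_\lambda(\fx_t,\cdot)$, whose condition numbers are $\kappa$ and $3\kappa$ by Proposition~\ref{prop:strong_convex_L}. Use also the Lipschitz continuity of $\fy^*$ and $\fy^*_\lambda$ (constants $\kappa$ and $3\kappa\le 4\kappa$). Together these give the following induction: if $\|\fw_{t-1}-\fy^*(\fx_{t-1})\|\le\tilde\epsilon$ and $\|\fy_{t-1}-\fy_\lambda^*(\fx_{t-1})\|\le\tilde\epsilon$, then the warm-start distance at step $t$ is at most $\tilde\epsilon+4\kappa\|\fs^*_{t-1}\|$. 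The stated $K_t^1,K_t^2$ then drive both errors below $\tilde\epsilon$; the base case uses $R$.

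Lemma~\ref{lem:estimators_error}, with the definition of $\tilde\epsilon$, then yields $\|\nabla\fL^*_\lambda(\fx_t)-\g_t\|\le\bar C_g\epsilon$. It also yields $\|\nabla^2\fL^*_\lambda(\fx_t)-\H_t\|\le\bar C_H\sqrt{M\epsilon}$, which in particular holds at the snapshots $t=\pi(t)$. Here the $C_1$ term is absorbed because $\lambda\ge\bar\ell\kappa^5/\sqrt{M\epsilon}$ makes $C_1\lesssim C_2$ up to constants.

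\textbf{Step 2 (descent).} Sum \eqref{eq:cubic_progress_one_step} over an epoch $t=km,\dots,km+m-1$. The negative terms are $\frac{13\bar\rho^3}{M^2}\|\fx_{\pi(t)}-\fx_t\|^3$. By the triangle inequality and H\"older, $\|\fx_{\pi(t)}-\fx_t\|^3\le (t-\pi(t))^2\sum_{j=\pi(t)}^{t-1}\|\fx_{j+1}-\fx_j\|^3$. Summing over the epoch gives at most $m^3\sum_j\|\fx_{j+1}-\fx_j\|^3$, or more carefully $\OM(m^3)$ times the epoch sum. With $M=\Omega(m\bar\rho)$ we have $13\bar\rho^3m^3/M^2\le M/144$, so these terms are absorbed by $\frac{M}{72}\sum\|\fx_{t+1}-\fx_t\|^3$.

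This yields
\[
\fL^*_\lambda(\fx_{km})-\fL^*_\lambda(\fx_{(k+1)m})\ge\sum_t\gamma(\fx_{t+1})+\frac{M}{144}\sum_t\|\fs_t^*\|^3 .
\]
Telescoping over all epochs, and using $\fL^*_\lambda(\fx_0)-\inf\fL_\lambda^*\le\Delta+\OM(\bar\ell\kappa^2/\lambda)=\OM(\Delta)$ by Proposition~\ref{prop:proxy_property}(a) and the choice $\lambda\ge\bar\ell\kappa^2/\Delta$, gives the following. As long as the break test fails for every $t<T$, each step contributes at least order $M\|\fs^*_t\|^3$ plus $M\bar\rho^3\|\tilde\fx-\fx_t\|^3/(M+\bar\rho)^3$, which is $\gtrsim\epsilon^{3/2}/\sqrt M$. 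This is impossible for $T=\Theta(\Delta\sqrt M\epsilon^{-3/2})$ with a large enough constant, so the algorithm breaks at some $t<T$.

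\textbf{Step 3 (output quality).} At the break, $\|\fs_t^*\|$ and $\|\tilde\fx-\fx_t\|$ are both $\OM(\sqrt{\epsilon/M})$. Use the first-order optimality condition of the cubic subproblem, $\g_t+\tilde\H\fs^*_t+\frac M2\|\fs_t^*\|\fs_t^*=0$, and its second-order condition $\tilde\H+\frac M2\|\fs^*_t\|\I\succeq0$. Combine these with $\bar\rho$-Lipschitzness of $\nabla^2\fL^*_\lambda$ between $\tilde\fx$, $\fx_t$ and $\fx_{t+1}$, and with the Step 1 errors. This bounds $\|\nabla\fL^*_\lambda(\fx_{t+1})\|\le\epsilon$ and $\lambda_{\min}(\nabla^2\fL^*_\lambda(\fx_{t+1}))\ge-\OM(\sqrt{M\epsilon})$. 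The constants $288/287$ are exactly what make these bounds close.

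Proposition~\ref{prop:proxy_property}(a) with $\lambda\ge\bar\ell\kappa^3/\epsilon$ and $\lambda\ge\bar\ell\kappa^5/\sqrt{M\epsilon}$ transfers the guarantee to $\varphi$. Since $M=\Theta(m\bar\rho)=\Theta(m\bar\ell\kappa^5)$, we get $\sqrt{M\epsilon}=\OM(\kappa^{2.5}\bar\ell^{0.5}m^{0.5}\epsilon^{0.5})$.

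\textbf{Complexities.} There are $T=\OM(\Delta\kappa^{2.5}\bar\ell^{0.5}m^{0.5}\epsilon^{-1.5})$ iterations. Each uses $\tilde\OM(\sqrt\kappa)$ gradient calls, since the logarithms in $K_t$ are bounded, with $\|\fs^*_{t-1}\|$ bounded by standard cubic-step estimates. This gives $\tilde\OM(\kappa^3\bar\ell^{0.5}m^{0.5}\epsilon^{-1.5})$ first-order calls. Hessians are formed only at the $\lceil T/m\rceil$ snapshots, giving $\OM(1+\kappa^{2.5}\bar\ell^{0.5}m^{-0.5}\epsilon^{-1.5})$ second-order calls.

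The main obstacle is Step 2: tuning the H\"older/epoch bound so that the stale-Hessian penalty is absorbed with $M=\Omega(m\bar\rho)$ rather than $\Omega(m^{3/2}\bar\rho)$. I would follow the Doikov et al.\ argument, which bounds $\sum_{t}\|\fx_{\pi(t)}-\fx_t\|^3\le m^{3}\sum_t\|\fs_t\|^3/3$ within an epoch, so that $\bar\rho^3m^3/M^2\lesssim M$ requires only $M\gtrsim m\bar\rho$.
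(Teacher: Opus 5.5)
Your proof works and follows the paper's skeleton. The shared pieces are the AGD induction of Lemma~\ref{lem:exact_enough} (with $\bar C_g,\bar C_H$) for the inexactness conditions, and the epoch-wise summation of Lemma~\ref{lem:pre-LFSBA}, where the $m^3/3$ inequality of Lemma~\ref{lem:sum_inequality} absorbs the stale-Hessian penalty exactly when $M\gtrsim m\bar\rho$. You also share the transfer to $\varphi$ via Proposition~\ref{prop:proxy_property} and the count of $\lceil T/m\rceil$ Hessian evaluations.

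The real difference is where the per-step $\epsilon^{3/2}/\sqrt{M}$ progress comes from. The paper keeps $\gamma(\fx_{t+1})$ in the epoch bound and lower-bounds it by assuming that no iterate is an $(\epsilon,\sqrt{M\epsilon})$-SOSP of $\fL_\lambda^*$. You discard $\gamma$ and use the failure of the break test instead. Your version shows that the test actually fires before iteration $T$, so the returned point is the one the test certifies. The $\gamma$ argument on its own only rules out that every iterate is non-stationary. Your route also makes the $\|\nabla\fL_\lambda^*(\fx_{t+1})\|^{3/2}$ and $\xi(\fx_{t+1})^3$ estimates of Lemma~\ref{lem:pre-LFSBA} unnecessary.

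For the gradient count, the paper applies AM--GM to the product of the logarithms in $K_t^1,K_t^2$, together with $\frac{M}{720}\sum_t\|\fs_t^*\|^3\le\fL_\lambda^*(\fx_0)-\min\fL_\lambda^*=\OM(\Delta)$. Bounding each logarithm separately, e.g.\ via $\|\fs_t^*\|^3\le 720\,\OM(\Delta)/M$, gives the same $\tilde{\OM}(\sqrt{\kappa}\,T)$ with slightly cruder log factors.

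Two steps need an extra line.

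First, your displayed epoch inequality has no $\|\tilde{\fx}-\fx_t\|$ term, so the claim that each step contributes order $\bar\rho^3\|\tilde{\fx}-\fx_t\|^3/M^2$ is not yet justified. Split $\frac{M}{144}\sum_t\|\fs_t^*\|^3$ in half and apply $\sum_t\|\tilde{\fx}-\fx_t\|^3\le\frac{m^3}{3}\sum_t\|\fs_t^*\|^3$ to one half. That half then dominates $c\,\bar\rho^3M^{-2}\sum_t\|\tilde{\fx}-\fx_t\|^3$, again precisely when $M\gtrsim m\bar\rho$. Using $\bar\rho\le M$, a failed test gives $\sqrt{M\epsilon}\lesssim M\|\fs_t^*\|+\bar\rho\|\tilde{\fx}-\fx_t\|$. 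Hence $\epsilon^{3/2}/\sqrt{M}\lesssim M\|\fs_t^*\|^3+\bar\rho^3\|\tilde{\fx}-\fx_t\|^3/M^2$, and the contradiction with $T=\Theta(\Delta\sqrt{M}\epsilon^{-3/2})$ follows.

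Second, at the break the test does not give $\|\tilde{\fx}-\fx_t\|=\OM(\sqrt{\epsilon/M})$. It only gives $\bar\rho\|\tilde{\fx}-\fx_t\|=\OM(\sqrt{M\epsilon})$, which can be $\Theta(m)$ times larger. That weaker bound is all Step~3 actually uses. It enters through the cross term $\bar\rho\|\fs_t^*\|\|\tilde{\fx}-\fx_t\|\lesssim\epsilon$ in the gradient estimate and through $\bar\rho\|\tilde{\fx}-\fx_t\|\lesssim\sqrt{M\epsilon}$ in the Hessian estimate.

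Two minor points. $C_1\le C_2$ already follows from the explicit constants in the proof of Lemma~\ref{lem:estimators_error} for any $\lambda>0$, not from the $\kappa^5$ term in $\lambda$. And $\fy_\lambda^*$ is $4\kappa$-Lipschitz by Lemma~\ref{lem:gradient_smooth_y_star}; $3\kappa$ is the condition number of $\fL_\lambda(\fx,\cdot)$.
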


\paragraph{Discussion on the computational complexity.}
Theorem~\ref{thm:Lazy_Hessian_calls} indicates that the iteration complexity $\tilde\OM(m^{0.5}\epsilon^{-1.5})$ of LFSBA is worse than $\tilde{\OM}(\epsilon^{-1.5})$ of FSBA, 
which is due to the reuse of Hessian.
However, considering the difference in computational complexity between first- and second-order oracles in Assumption~\ref{assum:computational complexity}, 
LFSBA achieves a better computational complexity by tuning $m$ for a trade-off of per-iteration computation cost and iteration complexity.
We state the computational complexity of LFSBA as follows
\begin{align*}
\begin{split}
    &\text{Cost(LFSBA)} \\
    &= N \cdot \# \text{1st-order oracle} + Nd \cdot \text{2nd-order oracle} \\
    &= \tilde{\OM} \big( N\kappa^3m^{0.5}\bar{\ell}^{0.5}\epsilon^{-1.5} 
    + Nd\kappa^{2.5}m^{-0.5}\bar{\ell}^{0.5}\epsilon^{-1.5} \big)\\
    &= \tilde{\OM} \big( N(\kappa^{0.5}+\kappa^{0.25}d^{0.5})\kappa^{2.5}\bar{\ell}^{0.5}\epsilon^{-1.5} \big),
\end{split}
\end{align*}
where the last inequality is by setting the frequency of update Hessian as $m = \Theta\left(1+\frac{d}{\sqrt{\kappa}}\right)$.

\begin{remark}
  The computational complexity of LFSBA improves FSBA \eqref{eq:C(FSBA)} by a factor of $d^{0.5}/\kappa^{0.25}$, significantly reducing the computational cost when the dimension is large.
\end{remark}
\begin{remark}
Once $\tilde{\H}=\H(\fx_{\pi(t)};\fy_{\pi(t)},\fw_{\pi(t)})$ is computed, the cubic regularized-Newton update of line 9 in Algorithm~\ref{alg:LFSBA} can be performed efficiently within $\tilde{\OM}(d^2)$ by performing the eigenvalue decomposition on $\tilde{\H}$~\cite{doikov2023second}.
\end{remark}
\subsection{Improved Results for Nonconvex Strongly-Concave Minimax Problems}

\begin{algorithm}
\caption{Lazy Minimax Cubic Newton method (LMCN)}
\label{alg:LCMN}
\begin{algorithmic}[1]
    \STATE \textbf{Input:} $\boldsymbol{x}_0 \in \mathbb{R}^{d_x}, \boldsymbol{y}_{-1}=\boldsymbol{0}, T,\left\{K_t\right\}_{t=0}^T,$ $\kappa_1$, $\ell_1$, $\epsilon$, $m$, $M$
    \FOR{$t=0,1\cdots, T-1$}
        \STATE $\boldsymbol{y}_t = \operatorname{AGD}\left(-f\left(\boldsymbol{x}_t, \cdot\right), \boldsymbol{y}_{t-1}, K_t, \frac{1}{\ell_1}, \frac{\sqrt{\kappa_1}-1}{\sqrt{\kappa_1}+1}\right)$
        \STATE Compute $\g_t = \g(\fx_t;\fy_t)$ according to \eqref{eq:g-minimax}.
        \vskip 0.2cm
\STATE \textbf{if} $t\%m = 0$ \vskip 0.1cm
\STATE \quad Set $\tilde{\fx}=\fx_t$\\[0.1cm]
\STATE \quad Compute $\tilde{\H}= \H(\fx_t;\fy_t)$ according to \eqref{eq:H-minimax} \vskip 0.1cm
\STATE
$\fs_t^* = \argmin_{\fs\in\RB^{dx}}\big\{\g_t^{\top} \fs + \frac{1}{2}\fs^{\top}\tilde{\H}\fs + \frac{M}{6}\|\fs\|^3\big\}$
\vskip 0.1cm
\STATE
$\fx_{t+1}=\fx_t + \fs_t^*$
\vskip 0.1cm
\STATE \textbf{If} 
$\epsilon\! \geq\! \frac{1}{M}\big(\frac{288}{287}\big)^2\!
        \big(\frac{M+2 \bar{\rho}}{\sqrt{2}} \|\fs_t^*\|\!+\!\bar{\rho}\|\tilde{\fx}-\fx_{t}\|\big)^2$
        \vskip 0.1cm
    \STATE \textbf{then~break}
    \ENDFOR
    \STATE \textbf{Output:} $\hat{\boldsymbol{x}} = \boldsymbol{x}_{t+1}$
\end{algorithmic}
\end{algorithm}
We adopts the idea of LFSBA to the solve following non-convex strongly-concave minimax problem
\begin{align}
    \label{eq:min-max}
    \min_{\fx\in\RB^{dx}}\max_{\fy\in\RB^{dy}} f(\fx,\fy).
\end{align}
Let $\varphi(\cdot):= \argmax_{\fy\in\RB^{d_x}} f(\cdot,\fy)$, the above minimax problem can be regarded as a special bilevel optimization problem~\eqref{eq:bilevel} with the lower function $g(\fx,\fy)=-f(\fx,\fy)$.
We suppose $f(\cdot,\cdot)$ and $\varphi(\cdot)$ satisfy the following assumption.
\begin{assumption}
\label{assum:minimax}
$f(\cdot,\cdot)$ and $\varphi(\cdot)$ satisfy the following conditions:
(a) ${f}(\boldsymbol{x}, \boldsymbol{y})$ is twice differentiable and $\mu$-strongly concave with respect to $\boldsymbol{y}$ for any fixed $\boldsymbol{x}$;
(b) $\nabla f(\cdot,\cdot)$ is $\ell$-Lipschitz continuous and and $\nabla^2 f(\cdot,\cdot)$ is $\rho$-Lipschitz continuous;
(c)  $\varphi^* :=\min _{\boldsymbol{x} \in \mathbb{R}^{d_x}}{\varphi}(\boldsymbol{x})>-\infty$.
\end{assumption}

Since $\nabla \varphi(\cdot)$ and $\nabla^2 \varphi(\cdot)$ are Lipschitz continuous~\cite{Luo22finding}, they can be well approximated by introducing an additional variable $\fy\approx \fy^*(\fx)$ such that 
\begin{small}
 \begin{align}
 \label{eq:g-minimax}
 &\g(\fx;\fy):= \nabla_x f(\fx,\fy)\\
\label{eq:H-minimax}
\begin{split}
&\H(\fx;\fy) :=\\
&\nabla_{x x}^2 {f}(\boldsymbol{x}, \fy)-\nabla_{x y}^2 {f}(\boldsymbol{x}, \fy)[\nabla_{y y}^2 {f}(\boldsymbol{x}, \fy)]^{-1}\nabla_{y x}^2 {f}(\boldsymbol{x}, \fy).
\end{split}
 \end{align}
\end{small} 
\!\!Then, applying a similar ``lazy'' strategy as introduced in LFSBA based on $\g(\fx;\fy)$ and $\H(\fx;\fy)$ leads to our lazy minimax cubic-regularized Newton method (LMCN), presented in Algorithm~\ref{alg:LCMN}.
The LMCN generalizes the MCN method~\cite{Luo22finding} and has better computational complexity than the MCN method in finding the SOSP of $\varphi(\fx)$.
\begin{theorem}
\label{thm:LMCN}
    Under Assumptions~\ref{assum:computational complexity} and \ref{assum:minimax}, LMCN (Algorithm~\ref{alg:LCMN}) can find an $(\epsilon,\kappa^{1.25}\sqrt{d\rho\epsilon})$-SOSP of $\varphi(\cdot)$, where $\kappa = \ell/\mu$ within computational complexity 
    \begin{align}
    \label{eq:compu-LMCN}
       \!\!\! \text{\rm Cost(LMCN)}\! =\! \tilde{\OM}(\!N\!(\kappa^{0.5}\!+\!\kappa^{0.25}d^{0.5})\kappa^{1.5}\!\rho^{0.5}\epsilon^{-1.5})\!).
    \end{align}
We let $\tilde{\epsilon}\!=\!\min \{ \epsilon / (576\ell), \sqrt{M \epsilon} / (288\rho)\}$, $\bar{\rho}=4\sqrt{2}\kappa^3 \rho$, and set $\kappa_1=\kappa$, $\ell_1=\ell$, $m\!=d/\sqrt{\kappa}\!+\!1$, 
$M=\Theta(m\bar{\rho})$, \\
$K_t = \begin{cases}
&\!\!\! \left\lceil 2 \sqrt{\kappa} \log \left(\frac{\sqrt{\kappa+1}}{\tilde{\epsilon}}\left\|\fy^*\left(\boldsymbol{x}_0\right)\right\|\right)\right\rceil~~~~~~~~~~~~~~~~~t=0\\
&\!\!\! \left\lceil 2 \sqrt{\kappa} \log \left(\frac{\sqrt{\kappa+1}}{\tilde{\epsilon}}\left(\tilde{\epsilon}+\kappa\left\|\fx_t-\fx_{t-1}\right\|\right)\right)\right\rceil~t\geq 1
     \end{cases}$, and $T = \Theta\left((\varphi(\fx_0)-\varphi^*)\sqrt{M}\epsilon^{-3/2}\right)$  in Algorithm~\ref{alg:LCMN} to achieve \eqref{eq:compu-LMCN}.
\end{theorem}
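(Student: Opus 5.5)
The plan is to treat \eqref{eq:min-max} as the special case $g=-f$ of \eqref{eq:bilevel} and work directly with $\varphi(\fx)=f(\fx,\fy^*(\fx))$, where $\fy^*(\fx)=\argmax_{\fy} f(\fx,\fy)$. No penalty reformulation is needed, since $\nabla\varphi$ and $\nabla^2\varphi$ are exactly $\g(\fx;\fy^*(\fx))$ and $\H(\fx;\fy^*(\fx))$ from \eqref{eq:g-minimax}--\eqref{eq:H-minimax}. The argument then replays the proof of Theorem~\ref{thm:Lazy_Hessian_calls} with $\fL_\lambda^*$ replaced by $\varphi$.

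\textbf{Step 1 (estimator errors and smoothness).} First I would show, following \citet{Luo22finding}, that $\nabla^2\varphi$ is $\bar\rho$-Lipschitz with $\bar\rho=4\sqrt2\kappa^3\rho$ and that $\fy^*(\cdot)$ is $\kappa$-Lipschitz. Next come the analogues of Lemma~\ref{lem:estimators_error}:
\[
\|\nabla\varphi(\fx)-\g(\fx;\fy)\|\le \ell\|\fy-\fy^*(\fx)\|,\qquad \|\nabla^2\varphi(\fx)-\H(\fx;\fy)\|\le \OM(\kappa^2\rho)\|\fy-\fy^*(\fx)\|.
\]
The first bound is immediate. The second follows by perturbing the Schur complement: the three blocks are $\rho$-Lipschitz in $\fy$, and $\|[\nabla^2_{yy}f]^{-1}\|\le 1/\mu$. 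Consequently, with the stated $\tilde\epsilon$, having $\|\fy_t-\fy^*(\fx_t)\|\le\tilde\epsilon$ gives condition \eqref{eq:condi-inexact-lazy-cubic} with constants $\bar C_g,\bar C_H$.

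\textbf{Step 2 (AGD accuracy by induction).} Since $-f(\fx_t,\cdot)$ has condition number $\kappa$, AGD with warm start contracts the error by a factor $\sqrt{\kappa+1}\,e^{-K/(2\sqrt\kappa)}$. I would use
\[
\|\fy_{t-1}-\fy^*(\fx_t)\|\le\tilde\epsilon+\kappa\|\fx_t-\fx_{t-1}\|
\]
and the chosen $K_t$ to propagate $\|\fy_t-\fy^*(\fx_t)\|\le\tilde\epsilon$ for all $t$. Each $K_t$ is $\tilde\OM(\sqrt\kappa)$.

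\textbf{Step 3 (lazy descent and telescoping).} With Step 1 in hand, Lemma~\ref{lem:pre-LFSBA} applies verbatim with $\varphi$ in place of $\fL^*_\lambda$. I would sum \eqref{eq:cubic_progress_one_step} over each epoch of length $m$. By the triangle inequality and H\"older,
\[
\|\fx_{\pi(t)}-\fx_t\|^3\le m^2\sum_{i=\pi(t)}^{t-1}\|\fx_{i+1}-\fx_i\|^3 ,
\]
so the negative terms over an epoch total at most $13\bar\rho^3m^3M^{-2}\sum\|\fx_{i+1}-\fx_i\|^3$. This is absorbed by $\frac{M}{144}\sum\|\fx_{i+1}-\fx_i\|^3$ once $M\ge c\,m\bar\rho$ for a suitable absolute $c$. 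Telescoping then yields
\[
\varphi(\fx_0)-\varphi^*\ge\sum_t\Big(\gamma(\fx_{t+1})+\tfrac{M}{144}\|\fs_t^*\|^3\Big).
\]

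\textbf{Step 4 (termination certifies an SOSP).} I expect this to be the main obstacle. While the stopping test fails, $\|\fs_t^*\|$ or $\|\tilde\fx-\fx_t\|$ is at least of order $\sqrt{\epsilon/M}$, so each iteration, or each epoch, decreases $\varphi$ by $\Omega(\epsilon^{1.5}/\sqrt M)$. Hence the test must trigger within $T=\Theta(\Delta\sqrt M\epsilon^{-1.5})$ iterations. When it does trigger, I would combine the first-order optimality condition of the cubic subproblem,
\[
\g_t+\tilde\H\fs_t^*+\tfrac M2\|\fs_t^*\|\fs_t^*=0,\qquad \tilde\H+\tfrac M2\|\fs_t^*\|\I\succeq0,
\]
with Taylor's bound for $\nabla\varphi(\fx_{t+1})$ and $\|\nabla^2\varphi(\fx_t)-\nabla^2\varphi(\tilde\fx)\|\le\bar\rho\|\tilde\fx-\fx_t\|$, plus the Step 1 errors. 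The goal is $\|\nabla\varphi(\hat\fx)\|\le\epsilon$ and $\lambda_{\min}(\nabla^2\varphi(\hat\fx))\ge-\OM(\sqrt{M\epsilon})$. The constant $(288/287)^2$ in the test is what should make these inequalities close, and checking that the lazy-Hessian mismatch is fully covered by the $\bar\rho\|\tilde\fx-\fx_t\|$ term is the delicate part. Substituting $M=\Theta(m\kappa^3\rho)$ and $m=d/\sqrt\kappa+1$ gives $\sqrt{M\epsilon}=\OM(\kappa^{1.25}\sqrt{d\rho\epsilon})$.

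\textbf{Step 5 (cost).} Gradient and HVP cost is $T\cdot\tilde\OM(\sqrt\kappa)\cdot N=\tilde\OM(N\kappa^{2}m^{0.5}\rho^{0.5}\epsilon^{-1.5})$. Hessian cost is $(1+T/m)\,dN=\OM(Nd\kappa^{1.5}m^{-0.5}\rho^{0.5}\epsilon^{-1.5})$. Setting $m=1+d/\sqrt\kappa$ balances the two terms and gives \eqref{eq:compu-LMCN}.
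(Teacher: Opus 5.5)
Your plan follows the paper's route. You view the problem as $g=-f$, rerun Lemma~\ref{lem:pre-LFSBA} and the epoch/telescoping argument of Theorem~\ref{thm:Lazy_Hessian_calls} with $\varphi$ in place of $\fL_\lambda^*$ and $\bar\rho=4\sqrt2\kappa^3\rho$, keep $\fy_t$ accurate by warm-started AGD, and balance $m$. Bounding each $K_t$ by $\tilde{\OM}(\sqrt\kappa)$ directly, via $\|\fs_{t-1}^*\|^3\le 144(\varphi(\fx_0)-\varphi^*)/M$ from your Step 3, is a harmless shortcut for the paper's AM--GM computation. Three points need fixing.

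\emph{Step 1 is inconsistent with the stated $\tilde\epsilon$.} Your bound $\|\nabla^2\varphi(\fx)-\H(\fx;\fy)\|\le\OM(\kappa^2\rho)\|\fy-\fy^*(\fx)\|$ is the correct one. The Schur-complement perturbation gives $(1+\kappa)^2\rho$, the $\lambda$-free part of Lemma~\ref{lem:estimators_error}. The paper's proof instead claims $\rho\|\fy_t-\fy^*(\fx_t)\|$, treating $\nabla^2\varphi-\H$ as a difference of $\nabla^2 f$ and dropping the Schur term. The stated $\tilde\epsilon=\min\{\epsilon/(576\ell),\sqrt{M\epsilon}/(288\rho)\}$ is calibrated to that incorrect constant. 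With your constant it only gives Hessian accuracy $\OM(\kappa^2)\bar{C}_H\sqrt{M\epsilon}$, not \eqref{eq:condi-inexact-lazy-cubic}. Replace the second entry by $\sqrt{M\epsilon}/(288(1+\kappa)^2\rho)$. Each $K_t$ then grows by $\OM(\sqrt\kappa\log\kappa)$, and \eqref{eq:compu-LMCN} is unchanged up to logarithms.

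\emph{Step 4: your route is better than the paper's, but quantitatively too loose.} The paper lower-bounds the decrease by $\gamma(\fx_{t+1})$, assuming no iterate is an SOSP. That shows some $\fx_t$ is an SOSP, but not that the stopping test fires, and the output is the iterate where it fires. Your argument closes that hole: a failed test forces a long step or a long lazy drift, so the test must fire within $T$ iterations and then certifies $\hat{\fx}$.

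However, ``$\|\tilde{\fx}-\fx_t\|$ at least of order $\sqrt{\epsilon/M}$'' is too weak. By H\"older over at most $m$ steps it yields only $\Omega(\epsilon^{1.5}/(m^2\sqrt M))$ decrease per epoch. Even $\Omega(\epsilon^{1.5}/\sqrt M)$ per epoch would make $T$ a factor $m$ larger and destroy the balance in Step 5.

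Keep the sharp consequence of the failed test: either $\|\fs_t^*\|\ge\frac{287\sqrt2}{1728}\sqrt{\epsilon/M}$, or $\|\tilde{\fx}-\fx_t\|\ge\frac{287}{576}\sqrt{M\epsilon}/\bar\rho\ge\frac{287}{576}\,c\,m\sqrt{\epsilon/M}$, using $M\ge cm\bar\rho$. In the drift case, $\sum_{i=\pi(t)}^{t-1}\|\fs_i^*\|^3\ge\|\tilde{\fx}-\fx_t\|^3/m^2=\Omega(m(\epsilon/M)^{1.5})$. The step case is immediate. So every full epoch decreases $\varphi$ by $\Omega(m\epsilon^{1.5}/\sqrt M)$, which is what $T=\Theta((\varphi(\fx_0)-\varphi^*)\sqrt M\epsilon^{-1.5})$ requires.

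\emph{The curvature tolerance needs a caveat.} With $m=d/\sqrt\kappa+1$ one has $\sqrt{M\epsilon}=\Theta(\kappa^{1.25}\sqrt{d\rho\epsilon}+\kappa^{1.5}\sqrt{\rho\epsilon})$. So the tolerance $\kappa^{1.25}\sqrt{d\rho\epsilon}$ holds only when $d=\Omega(\sqrt\kappa)$. This caveat is inherited from the statement, but your Step 4 should not assert it unconditionally.
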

\begin{remark}
    The computational complexity of the LMCN is better than $\tilde{\OM}(N(\kappa^{0.5}+d)\kappa^{1.5}\rho^{0.5}\epsilon^{-1.5}))$ of the MCN~\cite{Luo22finding} due to the AM-GM inequality.
\end{remark}

\section{Numerical Experiments}
\label{sec:exp}
\subsection{Synthetic Minimax Problem}
\label{sec:syn}
We first consider the synthetic minimax problem with $f(\cdot,\cdot)$ defined as
$ f(\fx, \fy):=w\left(x_3\right)-10 y_1^2+x_1 y_1-5 y_2^2+x_2 y_2, $
where $\fx=[x_1, x_2, x_3]^{\top}$, $\fy=[y_1, y_2]^{\top}$, and $w(\cdot)$ is a multi-stage function defined in Appendix~\ref{app:expminimax}.

The experiments are conducted with different initial points:
$(\fx_1, \fy_1)=([10^{-3}, 10^{-3}, 10^{-1}]^{\top},[0,0]^{\top})$ and $(\fx_2, \fy_2)=([10^{-3},10^{-3},1]^{\top},[0,0]^{\top})$.
We compare our LMCN algorithm with the following baseline algorithms: PRAGDA \cite{yang2023accelerating}, MCN \cite{Luo22finding}, iMCN \cite{Luo22finding} and classical GDA \cite{lin2020gradient}. The results are shown in Figure \ref{fig:minmax_comparison}.

 \begin{figure*}[t]
    \centering
    \setlength{\tabcolsep}{6pt}
    \begin{tabular}{cc}
        \includegraphics[width=0.38\textwidth]{figures/pragda2.pdf} &
        \includegraphics[width=0.38\textwidth]{figures/pragda3.pdf} \\
        \scriptsize Initial point $(\fx_1, \fy_1)$ &
        \scriptsize Initial point $(\fx_2, \fy_2)$
    \end{tabular}
    \caption{Comparison of LMCN and baseline algorithms in terms of oracle calls under different initial points $(\fx_1,\fy_1)$ and $(\fx_2,\fy_2)$.}
    \label{fig:minmax_comparison}
\end{figure*}
 
\subsection{Data Hypercleaning}
\label{sec:hypercleaning}
\begin{figure*}[t] 
    \centering
    \setlength{\tabcolsep}{1pt} 
    
    \begin{tabular}{cccc} 
        \includegraphics[width=0.245\linewidth]{figures/breast-25.png} &
        \includegraphics[width=0.245\linewidth]{figures/breast-50.png} &
        \includegraphics[width=0.245\linewidth]{figures/australian-25.png} &
        \includegraphics[width=0.245\linewidth]{figures/australian-50.png} \\
        
        \scriptsize Breast $p=0.25$ & 
        \scriptsize Breast $p=0.5$ &
        \scriptsize Australian $p=0.25$ & 
        \scriptsize Australian $p=0.5$
    \end{tabular}
    
    \vspace{-0.5em} 
    \caption{Comparison of various bilevel algorithms with different noise rate $p$ on ``breast-cancer'' and ``australian'' datasets.}
    \label{fig:data hyper}
\end{figure*}

We then conduct experiments to validate the efficiency of the proposed methods on the \textit{data hyper-cleaning} task~\cite{franceschi2018bilevel,shaban2019truncated,zhou2022model}, which can be formulated as a bilevel optimization problem~\eqref{eq:bilevel} with  the following 
upper and lower-level objectives:
{\small
\begin{equation*}
\begin{split}
    f(\fx, \fy) &:= \frac{1}{|\mathcal{D}^{\mathrm{val}}|} \!\sum_{(\va_i, b_i) \in \mathcal{D}^{\mathrm{val}}} \ell(\langle \va_i, \fy\rangle, b_i), \\
    g(\fx, \fy) &:= \frac{1}{|\mathcal{D}^{\mathrm{tr}}|} \!\sum_{(\va_i, b_i) \in \mathcal{D}^{\mathrm{tr}}} \sigma(x_i) \ell(\langle \va_i, \fy\rangle, b_i)\! +\! c\|\fy\|^2.
\end{split}
\end{equation*}}
\!\!In the above, $\mathcal{D}^{\operatorname{tr}}$ denotes the noisy training set and $\mathcal{D}^{\mathrm{val}}$ denotes the validation set. $\left(\va_i, b_i\right)$ denotes the $i$-th sample in the dataset, where $\va_i$ represents the feature and $b_i$ represents its corresponding label. 
We denote $\sigma(\cdot)$ as a clipping function that maps a scalar to the interval $[0,1]$ and $\ell(\cdot, \cdot)$ is the loss of cross entropy.
We set $c=10^{-3}$.

We compare the FSBA method (Algorithm~\ref{alg:FSBA}) and its lazy variant (Algorithm~\ref{alg:LFSBA}) with baseline methods, including ITD~\cite{ji2021bilevel}, AID with conjugate gradient~\cite{maclaurin2015gradient}, and near optimal fully first-order methods F$^2$BA~\cite{Chen23Nearoptimal} on ``breast-cancer" and ``australian" datasets~\cite{chang2011libsvm}.
We report the results on $\fD_{tr}$ with different noise rates $p  = 25\%$ and $p = 50\%$ (the ratio of training samples with disrupted labels) in Figure~\ref{fig:data hyper}, which demonstrates that our LFSBA and FSBA methods converge faster than the baselines. We defer the hyperparameter tuning details for this experiment to the appendix~\ref{app:expdatacleaning}.

\subsection{Hyperparameter Tuning}
We validate the proposed methods on \textit{hyperparameter tuning} task, which aims to find the optimal hyperparameter that minimizes the loss on the validation dataset.
The \textit{hyperparameter tuning} task can be reformulated as a bilevel optimization problem with the upper and lower-level objectives: 
\begin{align*}
\begin{split}
&f(\fx,\fy) := \! \frac{1}{|\mathcal{D}_{\text{val}}|} \sum_{(\boldsymbol{a}_i,\boldsymbol{b}_i)\in \mathcal{D}_{\text{val}}} L(\boldsymbol{y}^*(\fx); \boldsymbol{a}_i, \boldsymbol{b}_i),\\
&g(\fx,\fy) :=\!\!\frac{1}{|\mathcal{D}_{\text{tr}}|}\!\!\!\! \sum_{(\boldsymbol{a}_i,\boldsymbol{b}_i)\in \mathcal{D}_{\text{tr}}} \!\!\!\!\!\!\!\!L(\boldsymbol{w}; \boldsymbol{a}_i, \boldsymbol{b}_i) \!+\! \frac{1}{2cp} \sum_{j=1}^c \sum_{k=1}^p \!\exp(x_k) y_{jk}^2, 
\end{split}
\end{align*}
where $\fx = [x_1,\cdots,x_k,\cdots,x_p]^{\top}\in\RB^{p}$, $\y\in\RB^{c\times p}$, $\mathcal{D}_{\text{tr}} = \{(\boldsymbol{a}_i, \boldsymbol{b}_i)\}$ is the training dataset, $\mathcal{D}_{\text{val}} = \{(\boldsymbol{x}_i, \boldsymbol{y}_i)\}$ is the validation dataset, $L(\cdot;\cdot,\cdot)$ is the cross-entropy loss.

We compare the performance of the inexact variant of FSBA (IFSBA, Algorithm~\ref{alg:IFSBA}) with the baseline algorithms over a logistic regression problem on 20 News group dataset \cite{grazzi2020iteration} ($c=20, p=130170$).
We divide the datasets into three parts: $5657$ for training, $5657$ for validation, and $7532$ for testing. 

We use the same hyperparameter tuning protocol as in the data cleaning experiments.
The results are presented in Figure~\ref{fig:hyper_optimization} and we observe that IFSBA converges faster than the baselines.



\begin{figure*}[t]
    \centering
    \setlength{\tabcolsep}{6pt}
    \begin{tabular}{cc}
        \includegraphics[width=0.38\textwidth]{figures/accu_time.pdf} &
        \includegraphics[width=0.38\textwidth]{figures/loss_time.pdf} \\
        \scriptsize Testing accuracy vs. running time &
        \scriptsize Testing loss vs. running time
    \end{tabular}
    \caption{Comparison of various bilevel algorithms on logistic regression on the 20 Newsgroups dataset.}
    \label{fig:hyper_optimization}
\end{figure*}

\subsection{Few-Shot Meta-Learning}
\begin{figure*}[t]
    \centering
    \setlength{\tabcolsep}{6pt}
    \begin{tabular}{cc}
        \includegraphics[width=0.38\textwidth]{figures/fc100.png} &
        \includegraphics[width=0.38\textwidth]{figures/miniimagenet.png} \\
        \scriptsize (a) FC100 &
        \scriptsize (b) miniImageNet
    \end{tabular}
    \caption{Test accuracy versus running time of different bilevel algorithms in 5-way 5-shot few-shot meta-learning experiments on FC100 and miniImageNet.}
    \label{fig:meta_learning_5way5shot}
\end{figure*}

We then conduct experiments to validate the efficiency of the proposed methods on \textit{few-shot meta-learning} task~\cite{finn2017model,raghu2019rapid,ji2021bilevel,fang2025qnbo}.
We consider $m$ few-shot tasks $\{\mathcal{T}_i\}_{i=1}^m$ sampled from a task
distribution $\mathcal{P}_{\mathcal{T}}$, where each task has a support set
$\mathcal{S}_i$ and a query set $\mathcal{D}_i$. We use a four-layer CNN with shared parameters $\fx$ as the feature extractor, and use $\fy_i$ as the last-layer linear classifier for task $\mathcal{T}_i$. The
meta-learning objective can be written as
\begin{align*}
\min_\fx \quad & \frac{1}{m}\sum_{i=1}^m \mathcal{L}_{\mathcal{D}_i}(\fx,\fy_i^*(\fx)) \\
\text{s.t.}\quad & \fy_i^*(\fx)\in \arg\min_{\fy_i}\mathcal{L}_{\mathcal{S}_i}(\fx,\fy_i).
\end{align*}
Here, $\mathcal{L}_{\mathcal{D}_i}(\fx,\fy_i)
= \frac{1}{|\mathcal{D}_i|}\sum_{\xi\in\mathcal{D}_i}\ell(\fx,\fy_i;\xi)$ is the query
loss, and $\mathcal{L}_{\mathcal{S}_i}(\fx,\fy_i)
=  \frac{1}{|\mathcal{S}_i|}\sum_{\xi\in\mathcal{S}_i}
(\ell(\fx,\fy_i;\xi)+\mathcal{R}(\fy_i))$ is the support loss. In our experiments,
$\ell$ is the cross-entropy loss and $\mathcal{R}$ is an $\ell_2$ regularizer.

Since prior work has shown that PZOBO~\cite{sow2022convergence} outperforms standard baselines such as MAML~\cite{finn2017model} and ANIL~\cite{raghu2019rapid}, we follow the same evaluation protocol and compare only against the stronger baselines PZOBO and qNBO~\cite{fang2025qnbo}. Under this setting, we evaluate F$^2$BA and IFSBA (Algorithm~\ref{alg:IFSBA}), in 5-way 5-shot experiments on miniImageNet~\cite{vinyals2016matching} and FC100~\cite{oreshkin2018tadam}. Results are averaged over five runs. All algorithms start from the same initialization with 20\% test accuracy, and the first data point is omitted for clarity.

For PZOBO and qNBO, we follow the hyperparameter settings used in their respective original implementations~\cite{sow2022convergence, fang2025qnbo}. For F$^2$BA and IFSBA, we tune the inner- and outer-loop learning rates over $\{10^{-3}, 10^{-2}, 10^{-1}, 1, 10, 10^2, 10^3\}$, the number of GD or AGD iterations over $\{5, 10, 30, 50\}$, and the penalty multiplier $\lambda$ over $\{1, 10, 10^2, 10^3\}$.
For IFSBA, we tune $M$ from $\left\{1,10^1, 10^2, 10^3\right\}$, the number of Cubic-Solver iterations and the order of Matrix Chebyshev Polynomials from $\{1,5,10,100\}$.
The results are presented in Figure~\ref{fig:meta_learning_5way5shot}, where IFSBA achieves higher test accuracy than the baselines within the same running time.

\section{Conclusion}
\label{sec:conclu}
In this paper, we have proposed several fully second-order methods for  nonconvex strongly-convex bilevel optimization.
The FSBA method takes $\tilde{\OM}(\epsilon^{-1.5})$ second-order oracle complexity to find the $(\epsilon,\OM(\sqrt{\epsilon}))$ SOSP of the hyper objective $\varphi(\cdot)$, and it is faster than the existing first- and second-order methods, 
showing the advantage of using second-order oracles in bilevel optimization.
The LFSBA method applies the lazy Hessian strategy and reduces the computational complexity of FSBA. 

\section*{Impact Statement}
This paper presents work whose goal is to advance the field of machine learning. There are many potential societal consequences of our work, none of which we feel must be specifically highlighted here.

\bibliography{references}

@article{Chen23Nearoptimal,
  title={Near-optimal fully first-order algorithms for finding stationary points in bilevel optimization},
  author={Chen, Lesi and Ma, Yaohua and Zhang, Jingzhao},
  journal={arXiv preprint arXiv:2306.14853},
  year={2023}
}

@inproceedings{chen2024finding,
  title={On finding small hyper-gradients in bilevel optimization: Hardness results and improved analysis},
  author={Chen, Lesi and Xu, Jing and Zhang, Jingzhao},
  booktitle={Conference on Learning Theory (COLT)},
  year={2024}
}

@article{doikov2023first,
  title={First and zeroth-order implementations of the regularized Newton method with lazy approximated Hessians},
  author={Doikov, Nikita and Grapiglia, Geovani Nunes},
  journal={arXiv preprint arXiv:2309.02412},
  year={2023}
}

@inproceedings{doikov2023second,
  title={Second-order optimization with lazy hessians},
  author={Doikov, Nikita and Jaggi, Martin and others},
  booktitle={International Conference on Machine Learning},
  pages={8138--8161},
  year={2023},
  organization={PMLR}
}

@article{ghadimi2018approximation,
  title={Approximation methods for bilevel programming},
  author={Ghadimi, Saeed and Wang, Mengdi},
  journal={arXiv preprint arXiv:1802.02246},
  year={2018}
}

@inproceedings{Kown23Fully,
  title={A fully first-order method for stochastic bilevel optimization},
  author={Kwon, Jeongyeol and Kwon, Dohyun and Wright, Stephen and Nowak, Robert D},
  booktitle={International Conference on Machine Learning},
  pages={18083--18113},
  year={2023},
  organization={PMLR}
}

@article{Luo22finding,
  title={Finding second-order stationary points in nonconvex-strongly-concave minimax optimization},
  author={Luo, Luo and Li, Yujun and Chen, Cheng},
  journal={Advances in Neural Information Processing Systems},
  volume={35},
  pages={36667--36679},
  year={2022}
}

@article{wang2020improved,
  title={Improved algorithms for convex-concave minimax optimization},
  author={Wang, Yuanhao and Li, Jian},
  journal={Advances in Neural Information Processing Systems},
  volume={33},
  pages={4800--4810},
  year={2020}
}

@inproceedings{ji2021bilevel,
  title={Bilevel optimization: Convergence analysis and enhanced design},
  author={Ji, Kaiyi and Yang, Junjie and Liang, Yingbin},
  booktitle={International conference on machine learning},
  pages={4882--4892},
  year={2021},
  organization={PMLR}
}

@inproceedings{liao2018reviving,
  title={Reviving and improving recurrent back-propagation},
  author={Liao, Renjie and Xiong, Yuwen and Fetaya, Ethan and Zhang, Lisa and Yoon, KiJung and Pitkow, Xaq and Urtasun, Raquel and Zemel, Richard},
  booktitle={International Conference on Machine Learning},
  pages={3082--3091},
  year={2018},
  organization={PMLR}
}

@article{liu2022bome,
  title={Bome! bilevel optimization made easy: A simple first-order approach},
  author={Liu, Bo and Ye, Mao and Wright, Stephen and Stone, Peter and Liu, Qiang},
  journal={Advances in neural information processing systems},
  volume={35},
  pages={17248--17262},
  year={2022}
}

@article{sow2022convergence,
  title={On the convergence theory for hessian-free bilevel algorithms},
  author={Sow, Daouda and Ji, Kaiyi and Liang, Yingbin},
  journal={Advances in Neural Information Processing Systems},
  volume={35},
  pages={4136--4149},
  year={2022}
}

@inproceedings{franceschi2018bilevel,
  title={Bilevel programming for hyperparameter optimization and meta-learning},
  author={Franceschi, Luca and Frasconi, Paolo and Salzo, Saverio and Grazzi, Riccardo and Pontil, Massimiliano},
  booktitle={International conference on machine learning},
  pages={1568--1577},
  year={2018},
  organization={PMLR}
}

@inproceedings{pedregosa2016hyperparameter,
  title={Hyperparameter optimization with approximate gradient},
  author={Pedregosa, Fabian},
  booktitle={International conference on machine learning},
  pages={737--746},
  year={2016},
  organization={PMLR}
}

@article{rajeswaran2019meta,
  title={Meta-learning with implicit gradients},
  author={Rajeswaran, Aravind and Finn, Chelsea and Kakade, Sham M and Levine, Sergey},
  journal={Advances in neural information processing systems},
  volume={32},
  year={2019}
}

@article{ji2022theoretical,
  title={Theoretical convergence of multi-step model-agnostic meta-learning},
  author={Ji, Kaiyi and Yang, Junjie and Liang, Yingbin},
  journal={Journal of machine learning research},
  volume={23},
  number={29},
  pages={1--41},
  year={2022}
}

@article{konda1999actor,
  title={Actor-critic algorithms},
  author={Konda, Vijay and Tsitsiklis, John},
  journal={Advances in neural information processing systems},
  volume={12},
  year={1999}
}

@article{hong2023two,
  title={A two-timescale stochastic algorithm framework for bilevel optimization: Complexity analysis and application to actor-critic},
  author={Hong, Mingyi and Wai, Hoi-To and Wang, Zhaoran and Yang, Zhuoran},
  journal={SIAM Journal on Optimization},
  volume={33},
  number={1},
  pages={147--180},
  year={2023},
  publisher={SIAM}
}

@inproceedings{liu2018darts,
  title={DARTS: Differentiable Architecture Search},
  author={Liu, Hanxiao and Simonyan, Karen and Yang, Yiming},
  booktitle={International Conference on Learning Representations},
  year={2018}
}

@article{goodfellow2020generative,
  title={Generative adversarial networks},
  author={Goodfellow, Ian and Pouget-Abadie, Jean and Mirza, Mehdi and Xu, Bing and Warde-Farley, David and Ozair, Sherjil and Courville, Aaron and Bengio, Yoshua},
  journal={Communications of the ACM},
  volume={63},
  number={11},
  pages={139--144},
  year={2020},
  publisher={ACM New York, NY, USA}
}

@inproceedings{zhang2021idarts,
  title={idarts: Differentiable architecture search with stochastic implicit gradients},
  author={Zhang, Miao and Su, Steven W and Pan, Shirui and Chang, Xiaojun and Abbasnejad, Ehsan M and Haffari, Reza},
  booktitle={International Conference on Machine Learning},
  pages={12557--12566},
  year={2021},
  organization={PMLR}
}

@inproceedings{zoph2016neural,
  title={Neural Architecture Search with Reinforcement Learning},
  author={Zoph, Barret and Le, Quoc},
  booktitle={International Conference on Learning Representations},
  year={2016}
}

@article{yang2023accelerating,
  title={Accelerating inexact hypergradient descent for bilevel optimization},
  author={Yang, Haikuo and Luo, Luo and Li, Chris Junchi and Jordan, Michael I},
  journal={arXiv preprint arXiv:2307.00126},
  year={2023}
}

@article{nesterov2006cubic,
  title={Cubic regularization of Newton method and its global performance},
  author={Nesterov, Yurii and Polyak, Boris T},
  journal={Mathematical programming},
  volume={108},
  number={1},
  pages={177--205},
  year={2006},
  publisher={Springer}
}

@article{carmon2020lower,
  title={Lower bounds for finding stationary points I},
  author={Carmon, Yair and Duchi, John C and Hinder, Oliver and Sidford, Aaron},
  journal={Mathematical Programming},
  volume={184},
  number={1},
  pages={71--120},
  year={2020},
  publisher={Springer}
}

@inproceedings{bruckner2011stackelberg,
  title={Stackelberg games for adversarial prediction problems},
  author={Br{\"u}ckner, Michael and Scheffer, Tobias},
  booktitle={Proceedings of the 17th ACM SIGKDD international conference on Knowledge discovery and data mining},
  pages={547--555},
  year={2011}
}

@inproceedings{zhang2022revisiting,
  title={Revisiting and advancing fast adversarial training through the lens of bi-level optimization},
  author={Zhang, Yihua and Zhang, Guanhua and Khanduri, Prashant and Hong, Mingyi and Chang, Shiyu and Liu, Sijia},
  booktitle={International Conference on Machine Learning},
  pages={26693--26712},
  year={2022},
  organization={PMLR}
}

@article{robey2023adversarial,
  title={Adversarial training should be cast as a non-zero-sum game},
  author={Robey, Alexander and Latorre, Fabian and Pappas, George J and Hassani, Hamed and Cevher, Volkan},
  journal={arXiv preprint arXiv:2306.11035},
  year={2023}
}

@inproceedings{lorraine2020optimizing,
  title={Optimizing millions of hyperparameters by implicit differentiation},
  author={Lorraine, Jonathan and Vicol, Paul and Duvenaud, David},
  booktitle={International conference on artificial intelligence and statistics},
  pages={1540--1552},
  year={2020},
  organization={PMLR}
}

@inproceedings{domke2012generic,
  title={Generic methods for optimization-based modeling},
  author={Domke, Justin},
  booktitle={Artificial Intelligence and Statistics},
  pages={318--326},
  year={2012},
  organization={PMLR}
}

@inproceedings{franceschi2017forward,
  title={Forward and reverse gradient-based hyperparameter optimization},
  author={Franceschi, Luca and Donini, Michele and Frasconi, Paolo and Pontil, Massimiliano},
  booktitle={International Conference on Machine Learning},
  pages={1165--1173},
  year={2017},
  organization={PMLR}
}

@article{bolte2021nonsmooth,
  title={Nonsmooth implicit differentiation for machine-learning and optimization},
  author={Bolte, J{\'e}r{\^o}me and Le, Tam and Pauwels, Edouard and Silveti-Falls, Tony},
  journal={Advances in neural information processing systems},
  volume={34},
  pages={13537--13549},
  year={2021}
}

@inproceedings{arbel2022amortized,
  title={Amortized implicit differentiation for stochastic bilevel optimization},
  author={Arbel, Michael and Mairal, Julien},
  booktitle={The Tenth International Conference on Learning Representations},
  year={2022}
}

@inproceedings{grazzi2020iteration,
  title={On the iteration complexity of hypergradient computation},
  author={Grazzi, Riccardo and Franceschi, Luca and Pontil, Massimiliano and Salzo, Saverio},
  booktitle={International Conference on Machine Learning},
  pages={3748--3758},
  year={2020},
  organization={PMLR}
}

@inproceedings{lin2020gradient,
  title={On gradient descent ascent for nonconvex-concave minimax problems},
  author={Lin, Tianyi and Jin, Chi and Jordan, Michael},
  booktitle={International Conference on Machine Learning},
  pages={6083--6093},
  year={2020},
  organization={PMLR}
}

@article{tripuraneni2018stochastic,
  title={Stochastic cubic regularization for fast nonconvex optimization},
  author={Tripuraneni, Nilesh and Stern, Mitchell and Jin, Chi and Regier, Jeffrey and Jordan, Michael I},
  journal={Advances in neural information processing systems},
  volume={31},
  year={2018}
}

@inproceedings{shaban2019truncated,
  title={Truncated back-propagation for bilevel optimization},
  author={Shaban, Amirreza and Cheng, Ching-An and Hatch, Nathan and Boots, Byron},
  booktitle={The 22nd International Conference on Artificial Intelligence and Statistics},
  pages={1723--1732},
  year={2019},
  organization={PMLR}
}

@inproceedings{zhou2022model,
  title={Model agnostic sample reweighting for out-of-distribution learning},
  author={Zhou, Xiao and Lin, Yong and Pi, Renjie and Zhang, Weizhong and Xu, Renzhe and Cui, Peng and Zhang, Tong},
  booktitle={International Conference on Machine Learning},
  pages={27203--27221},
  year={2022},
  organization={PMLR}
}

@inproceedings{maclaurin2015gradient,
  title={Gradient-based hyperparameter optimization through reversible learning},
  author={Maclaurin, Dougal and Duvenaud, David and Adams, Ryan},
  booktitle={International conference on machine learning},
  pages={2113--2122},
  year={2015},
  organization={PMLR}
}

@article{chang2011libsvm,
  title={LIBSVM: a library for support vector machines},
  author={Chang, Chih-Chung and Lin, Chih-Jen},
  journal={ACM transactions on intelligent systems and technology (TIST)},
  volume={2},
  number={3},
  pages={1--27},
  year={2011},
  publisher={Acm New York, NY, USA}
}

@inproceedings{
yang2025lancbio,
title={LancBiO: Dynamic Lanczos-aided Bilevel Optimization via Krylov Subspace},
author={Yan Yang and Bin Gao and Ya-xiang Yuan},
booktitle={The Thirteenth International Conference on Learning Representations},
year={2025}
}

@inproceedings{fang2025qnbo,
  title={qNBO: quasi-Newton Meets Bilevel Optimization},
  author={Fang, Sheng and Liu, Yong-Jin and Yao, Wei and Yu, Chengming and Zhang, Jin},
  booktitle={International Conference on Learning Representations},
  year={2025}
}

@article{wang2024efficient,
  title={Efficient first order method for saddle point problems with higher order smoothness},
  author={Wang, Nuozhou and Zhang, Junyu and Zhang, Shuzhong},
  journal={SIAM Journal on Optimization},
  volume={34},
  number={4},
  pages={3342--3370},
  year={2024},
  publisher={SIAM}
}

@article{huo2023new,
  title={A New Simple Stochastic Gradient Descent Type Algorithm With Lower Computational Complexity for Bilevel Optimization},
  author={Huo, Haimei and Liu, Risheng and Su, Zhixun},
  journal={arXiv preprint arXiv:2306.11211},
  year={2023}
}

@article{khanduri2021near,
  title={A near-optimal algorithm for stochastic bilevel optimization via double-momentum},
  author={Khanduri, Prashant and Zeng, Siliang and Hong, Mingyi and Wai, Hoi-To and Wang, Zhaoran and Yang, Zhuoran},
  journal={Advances in neural information processing systems},
  volume={34},
  pages={30271--30283},
  year={2021}
}

@article{chu2025provably,
  title={A Provably Convergent Plug-and-Play Framework for Stochastic Bilevel Optimization},
  author={Chu, Tianshu and Xu, Dachuan and Yao, Wei and Yu, Chengming and Zhang, Jin},
  journal={arXiv preprint arXiv:2505.01258},
  year={2025}
}

@article{wang2024fully,
  title={Fully First-Order Methods for Decentralized Bilevel Optimization},
  author={Wang, Xiaoyu and Chen, Xuxing and Ma, Shiqian and Zhang, Tong},
  journal={arXiv preprint arXiv:2410.19319},
  year={2024}
}

@article{yang2021provably,
  title={Provably faster algorithms for bilevel optimization},
  author={Yang, Junjie and Ji, Kaiyi and Liang, Yingbin},
  journal={Advances in Neural Information Processing Systems},
  volume={34},
  pages={13670--13682},
  year={2021}
}

@inproceedings{scaman2017optimal,
  title={Optimal algorithms for smooth and strongly convex distributed optimization in networks},
  author={Scaman, Kevin and Bach, Francis and Bubeck, S{\'e}bastien and Lee, Yin Tat and Massouli{\'e}, Laurent},
  booktitle={international conference on machine learning},
  pages={3027--3036},
  year={2017},
  organization={PMLR}
}

@article{dong2025efficient,
  title={Efficient Curvature-Aware Hypergradient Approximation for Bilevel Optimization},
  author={Dong, Youran and Yang, Junfeng and Yao, Wei and Zhang, Jin},
  journal={arXiv preprint arXiv:2505.02101},
  year={2025}
}

@inproceedings{liu2020generic,
  title={A generic first-order algorithmic framework for bi-level programming beyond lower-level singleton},
  author={Liu, Risheng and Mu, Pan and Yuan, Xiaoming and Zeng, Shangzhi and Zhang, Jin},
  booktitle={International conference on machine learning},
  pages={6305--6315},
  year={2020},
  organization={PMLR}
}

@inproceedings{liu2021value,
  title={A value-function-based interior-point method for non-convex bi-level optimization},
  author={Liu, Risheng and Liu, Xuan and Yuan, Xiaoming and Zeng, Shangzhi and Zhang, Jin},
  booktitle={International conference on machine learning},
  pages={6882--6892},
  year={2021},
  organization={PMLR}
}

@article{liu2021towards,
  title={Towards gradient-based bilevel optimization with non-convex followers and beyond},
  author={Liu, Risheng and Liu, Yaohua and Zeng, Shangzhi and Zhang, Jin},
  journal={Advances in Neural Information Processing Systems},
  volume={34},
  pages={8662--8675},
  year={2021}
}

@inproceedings{shen2023penalty,
  title={On penalty-based bilevel gradient descent method},
  author={Shen, Han and Chen, Tianyi},
  booktitle={International Conference on Machine Learning},
  pages={30992--31015},
  year={2023},
  organization={PMLR}
}

@article{sow2022primal,
  title={A primal-dual approach to bilevel optimization with multiple inner minima},
  author={Sow, Daouda and Ji, Kaiyi and Guan, Ziwei and Liang, Yingbin},
  journal={arXiv preprint arXiv:2203.01123},
  year={2022}
}

@article{xiao2023generalized,
  title={A Generalized Alternating Method for Bilevel Learning under the Polyak-$\{$$\backslash$L$\}$ ojasiewicz Condition},
  author={Xiao, Quan and Lu, Songtao and Chen, Tianyi},
  journal={arXiv preprint arXiv:2306.02422},
  year={2023}
}

@article{lian2017can,
  title={Can decentralized algorithms outperform centralized algorithms? a case study for decentralized parallel stochastic gradient descent},
  author={Lian, Xiangru and Zhang, Ce and Zhang, Huan and Hsieh, Cho-Jui and Zhang, Wei and Liu, Ji},
  journal={Advances in neural information processing systems},
  volume={30},
  year={2017}
}

@inproceedings{mishchenko2022proxskip,
  title={Proxskip: Yes! local gradient steps provably lead to communication acceleration! finally!},
  author={Mishchenko, Konstantin and Malinovsky, Grigory and Stich, Sebastian and Richt{\'a}rik, Peter},
  booktitle={International Conference on Machine Learning},
  pages={15750--15769},
  year={2022},
  organization={PMLR}
}

@article{wang2022fine,
  title={Fine-tuning language models over slow networks using activation quantization with guarantees},
  author={Wang, Jue and Yuan, Binhang and Rimanic, Luka and He, Yongjun and Dao, Tri and Chen, Beidi and R{\'e}, Christopher and Zhang, Ce},
  journal={Advances in Neural Information Processing Systems},
  volume={35},
  pages={19215--19230},
  year={2022}
}

@inproceedings{ye2018communication,
  title={Communication-computation efficient gradient coding},
  author={Ye, Min and Abbe, Emmanuel},
  booktitle={International Conference on Machine Learning},
  pages={5610--5619},
  year={2018},
  organization={PMLR}
}

@article{yuan2022decentralized,
  title={Decentralized training of foundation models in heterogeneous environments},
  author={Yuan, Binhang and He, Yongjun and Davis, Jared and Zhang, Tianyi and Dao, Tri and Chen, Beidi and Liang, Percy S and Re, Christopher and Zhang, Ce},
  journal={Advances in Neural Information Processing Systems},
  volume={35},
  pages={25464--25477},
  year={2022}
}

@article{huang2025efficiently,
  title={Efficiently escaping saddle points in bilevel optimization},
  author={Huang, Minhui and Chen, Xuxing and Ji, Kaiyi and Ma, Shiqian and Lai, Lifeng},
  journal={Journal of Machine Learning Research},
  volume={26},
  number={1},
  pages={1--61},
  year={2025}
}

@article{kwon2023penalty,
  title={On penalty methods for nonconvex bilevel optimization and first-order stochastic approximation},
  author={Kwon, Jeongyeol and Kwon, Dohyun and Wright, Stephen and Nowak, Robert},
  journal={arXiv preprint arXiv:2309.01753},
  year={2023}
}

@article{xiao2023alternating,
  title={An alternating optimization method for bilevel problems under the Polyak-{\L}ojasiewicz condition},
  author={Xiao, Quan and Lu, Songtao and Chen, Tianyi},
  journal={Advances in Neural Information Processing Systems},
  volume={36},
  pages={63847--63873},
  year={2023}
}

@article{lecun2002gradient,
  title={Gradient-based learning applied to document recognition},
  author={LeCun, Yann and Bottou, L{\'e}on and Bengio, Yoshua and Haffner, Patrick},
  journal={Proceedings of the IEEE},
  volume={86},
  number={11},
  pages={2278--2324},
  year={2002},
  publisher={Ieee}
}

@article{chen2025faster,
  title={Faster Gradient Methods for Highly-smooth Stochastic Bilevel Optimization},
  author={Chen, Lesi and Li, Junru and Zhang, Jingzhao},
  journal={arXiv preprint arXiv:2509.02937},
  year={2025}
}

@book{axelsson1996iterative,
  title={Iterative solution methods},
  author={Axelsson, Owe},
  year={1996},
  publisher={Cambridge university press}
}

@inproceedings{chen2022single,
  title={A single-timescale method for stochastic bilevel optimization},
  author={Chen, Tianyi and Sun, Yuejiao and Xiao, Quan and Yin, Wotao},
  booktitle={International Conference on Artificial Intelligence and Statistics},
  pages={2466--2488},
  year={2022},
  organization={PMLR}
}

@article{huang2024optimal,
  title={Optimal Hessian/Jacobian-free nonconvex-PL bilevel optimization},
  author={Huang, Feihu},
  journal={arXiv preprint arXiv:2407.17823},
  year={2024}
}

@inproceedings{li2022fully,
  title={A fully single loop algorithm for bilevel optimization without hessian inverse},
  author={Li, Junyi and Gu, Bin and Huang, Heng},
  booktitle={Proceedings of the AAAI Conference on Artificial Intelligence},
  volume={36},
  pages={7426--7434},
  year={2022}
}

@inproceedings{kovalev2022first,
  title={The first optimal acceleration of high-order methods in smooth convex optimization},
  author={Kovalev, Dmitry and Gasnikov, Alexander},
  booktitle={NeurIPS},
  year={2022}
}

@inproceedings{carmon2022optimal,
  title={Optimal and adaptive {M}onteiro-{S}vaiter acceleration},
  author={Carmon, Yair and Hausler, Danielle and Jambulapati, Arun and Jin, Yujia and Sidford, Aaron},
  booktitle={NeurIPS},
  year={2022}
}

@article{kornowski2020high,
  title={High-order oracle complexity of smooth and strongly convex optimization},
  author={Kornowski, Guy and Shamir, Ohad},
  journal={arXiv preprint arXiv:2010.06642},
  year={2020}
}

@article{yao2024constrained,
  title={Constrained bi-level optimization: Proximal lagrangian value function approach and hessian-free algorithm},
  author={Yao, Wei and Yu, Chengming and Zeng, Shangzhi and Zhang, Jin},
  journal={arXiv preprint arXiv:2401.16164},
  year={2024}
}

@article{lu2023slm,
  title={Slm: A smoothed first-order lagrangian method for structured constrained nonconvex optimization},
  author={Lu, Songtao},
  journal={Advances in Neural Information Processing Systems},
  volume={36},
  pages={80414--80454},
  year={2023}
}

@article{arbel2022non,
  title={Non-convex bilevel games with critical point selection maps},
  author={Arbel, Michael and Mairal, Julien},
  journal={Advances in Neural Information Processing Systems},
  volume={35},
  pages={8013--8026},
  year={2022}
}

@article{chen2024second,
  title={Second-order min-max optimization with lazy hessians},
  author={Chen, Lesi and Liu, Chengchang and Zhang, Jingzhao},
  journal={arXiv preprint arXiv:2410.09568},
  year={2024}
}

@inproceedings{liu2025enhanced,
  title={An Enhanced Levenberg--Marquardt Method via Gram Reduction},
  author={Liu, Chengchang and Luo, Luo and Lui, John CS},
  booktitle={Proceedings of the AAAI Conference on Artificial Intelligence},
  volume={39},
  number={18},
  pages={18772--18779},
  year={2025}
}

@inproceedings{finn2017model,
  title={Model-agnostic meta-learning for fast adaptation of deep networks},
  author={Finn, Chelsea and Abbeel, Pieter and Levine, Sergey},
  booktitle={International conference on machine learning},
  pages={1126--1135},
  year={2017},
  organization={PMLR}
}

@article{raghu2019rapid,
  title={Rapid learning or feature reuse? towards understanding the effectiveness of maml},
  author={Raghu, Aniruddh and Raghu, Maithra and Bengio, Samy and Vinyals, Oriol},
  journal={arXiv preprint arXiv:1909.09157},
  year={2019}
}

@article{vinyals2016matching,
  title={Matching networks for one shot learning},
  author={Vinyals, Oriol and Blundell, Charles and Lillicrap, Timothy and Wierstra, Daan and others},
  journal={Advances in neural information processing systems},
  volume={29},
  year={2016}
}

@article{oreshkin2018tadam,
  title={Tadam: Task dependent adaptive metric for improved few-shot learning},
  author={Oreshkin, Boris and Rodr{\'\i}guez L{\'o}pez, Pau and Lacoste, Alexandre},
  journal={Advances in neural information processing systems},
  volume={31},
  year={2018}
}
\bibliographystyle{icml2026}

\newpage
\appendix
\onecolumn

\section{Future Directions}
\label{sec:limit}
We list some future directions of this paper in this section.
\begin{itemize}
\item We use AGD to solve the lower-level problems to obtain $\fy_t\approx \fy_{\lambda}^*(\fx_t)$ and $\fw_t\approx \fy(\fx_t)$ within $\tilde{\OM}(\sqrt{\kappa})$ iterations. 
It is possible to accelerate the lower solvers by using second-order methods~\cite{carmon2022optimal,kovalev2022first,kornowski2020high} to improve the complexity dependency on $\kappa$.
\item 
We consider the fully second-order methods for nonconvex strongly-convex bilevel optimization. It will be interesting to develop second-order methods for bilevel optimization without lower strong convexity \cite{chen2024finding,liu2020generic,liu2021value,liu2021towards,shen2023penalty,sow2022primal,xiao2023alternating,xiao2023generalized,kwon2023penalty,yao2024constrained,lu2023slm,arbel2022non} and demonstrate the superiority of second-order methods over the first-order methods under this setting. 
\item We consider the deterministic setting such that one can access the exact gradient and Hessian oracle of $f$ and $g$.
It is also important to design stochastic~\cite{Kown23Fully,huo2023new,khanduri2021near,chu2025provably,wang2024fully,yang2021provably,dong2025efficient} and distributed~\cite{wang2024fully,lian2017can,scaman2017optimal,mishchenko2022proxskip,wang2022fine,ye2018communication,yuan2022decentralized} variants of FSBA and LFSBA to further improve the practical performance.
\end{itemize}

\section{Useful Lemmas}

\begin{lemma}[Lemma 2, \citet{wang2020improved}]
\label{lem:AGD}
Running Algorithm \ref{alg:AGD} on $\ell_h$-smooth and $\mu_h$-strongly-convex
objective function $h(\cdot)$ with parameters $\eta=1 / \ell_h$ and $\theta=\frac{\sqrt{\kappa_h}-1}{\sqrt{\kappa_h}+1}$ produces the output $\boldsymbol{y}_K$ satisfying $\left\|\boldsymbol{y}_K-\boldsymbol{y}^*\right\|^2 \leq\left(\kappa_h+1\right)\left(1-\frac{1}{\sqrt{\kappa_h}}\right)^K\left\|\boldsymbol{y}_0-\boldsymbol{y}^*\right\|^2$,
where $\boldsymbol{y}^*=\arg \min _y h(\boldsymbol{y})$ and $\kappa_h=\ell_h / \mu_h$.
\end{lemma}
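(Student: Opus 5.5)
The plan is a Lyapunov (estimate-sequence) argument for Nesterov's constant-momentum scheme. Write $\kappa_h=\ell_h/\mu_h$, $\alpha:=1/\sqrt{\kappa_h}$ and $h^*:=h(\boldsymbol{y}^*)$. Introduce the auxiliary sequence
\[
\boldsymbol{v}_0=\boldsymbol{z}_0,\qquad \boldsymbol{v}_{k}:=\boldsymbol{z}_{k-1}+\tfrac{1}{\alpha}(\boldsymbol{z}_{k}-\boldsymbol{z}_{k-1})\quad (k\ge 1).
\]
Since $\theta=(1-\alpha)/(1+\alpha)$, the extrapolated point of Algorithm~\ref{alg:AGD} can be rewritten as the convex combination $\tilde{\boldsymbol{z}}_k=\frac{\boldsymbol{z}_k+\alpha\boldsymbol{v}_k}{1+\alpha}$; this identity also holds for $k=0$ because $\tilde{\boldsymbol{z}}_0=\boldsymbol{z}_0=\boldsymbol{v}_0$. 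The potential is
\[
\Phi_k:=h(\boldsymbol{z}_k)-h^*+\frac{\mu_h}{2}\|\boldsymbol{v}_k-\boldsymbol{y}^*\|^2 .
\]
The goal is the one-step contraction $\Phi_{k+1}\le(1-\alpha)\Phi_k$.

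To prove the contraction, I would proceed in three steps.
\begin{enumerate}
\item Use the descent lemma for the gradient step $\boldsymbol{z}_{k+1}=\tilde{\boldsymbol{z}}_k-\frac{1}{\ell_h}\nabla h(\tilde{\boldsymbol{z}}_k)$, which gives $h(\boldsymbol{z}_{k+1})\le h(\tilde{\boldsymbol{z}}_k)-\frac{1}{2\ell_h}\|\nabla h(\tilde{\boldsymbol{z}}_k)\|^2$.
\item Verify the identity $\boldsymbol{v}_{k+1}=(1-\alpha)\boldsymbol{v}_k+\alpha\tilde{\boldsymbol{z}}_k-\frac{\alpha}{\mu_h}\nabla h(\tilde{\boldsymbol{z}}_k)$. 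This is an algebraic check using the definition of $\boldsymbol{v}_{k+1}$, the step for $\boldsymbol{z}_{k+1}$, and $\alpha^2\ell_h=\mu_h$.
\item Expand $\frac{\mu_h}{2}\|\boldsymbol{v}_{k+1}-\boldsymbol{y}^*\|^2$ using this identity, and combine it with two inequalities: strong convexity at $\tilde{\boldsymbol{z}}_k$ tested against $\boldsymbol{y}^*$ (weight $\alpha$), and convexity at $\tilde{\boldsymbol{z}}_k$ tested against $\boldsymbol{z}_k$ (weight $1-\alpha$). The relation $\tilde{\boldsymbol{z}}_k-\boldsymbol{z}_k=\alpha(\boldsymbol{v}_k-\tilde{\boldsymbol{z}}_k)$ makes the cross terms $\langle\nabla h(\tilde{\boldsymbol{z}}_k),\cdot\rangle$ cancel.
\end{enumerate}
After the cancellation, the squared-gradient terms appear with coefficient $\frac{\alpha^2}{2\mu_h}-\frac{1}{2\ell_h}=0$. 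The leftover term $-\frac{\mu_h\alpha(1-\alpha)}{2}\|\boldsymbol{v}_k-\tilde{\boldsymbol{z}}_k\|^2$ is nonpositive and can be dropped. This yields $\Phi_{k+1}\le(1-\alpha)\Phi_k$.

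I expect this bookkeeping step to be the main obstacle: matching the paper's $\theta$-parametrization to the convex-combination form and checking that every coefficient closes exactly. I would first verify the $\boldsymbol{v}_{k+1}$ identity symbolically, since the rest is the textbook Nesterov computation once it holds.

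The conclusion then follows from the two endpoints. At the start, smoothness gives $h(\boldsymbol{z}_0)-h^*\le\frac{\ell_h}{2}\|\boldsymbol{z}_0-\boldsymbol{y}^*\|^2$, so $\Phi_0\le\frac{\ell_h+\mu_h}{2}\|\boldsymbol{z}_0-\boldsymbol{y}^*\|^2$. At the end, strong convexity gives $\frac{\mu_h}{2}\|\boldsymbol{z}_K-\boldsymbol{y}^*\|^2\le h(\boldsymbol{z}_K)-h^*\le\Phi_K$. Chaining these with the contraction,
\[
\|\boldsymbol{z}_K-\boldsymbol{y}^*\|^2\le\frac{2}{\mu_h}(1-\alpha)^K\Phi_0\le(\kappa_h+1)\Big(1-\tfrac{1}{\sqrt{\kappa_h}}\Big)^K\|\boldsymbol{z}_0-\boldsymbol{y}^*\|^2 ,
\]
which is the claimed bound, with $\boldsymbol{z}$ playing the role of $\boldsymbol{y}$ in the statement.
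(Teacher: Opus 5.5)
Your proposal is correct. The convex-combination form of $\tilde{\boldsymbol{z}}_k$, the $\boldsymbol{v}_{k+1}$ identity (via $\alpha^2\ell_h=\mu_h$), the contraction $\Phi_{k+1}\le(1-\alpha)\Phi_k$, and the endpoint conversions giving the factor $\kappa_h+1$ all check out. The paper gives no proof of its own; it only cites Wang and Li, whose argument is essentially the same. That argument takes Nesterov's linear rate for $h(\boldsymbol{z}_K)-h^*$ from the initial potential $h(\boldsymbol{z}_0)-h^*+\frac{\mu_h}{2}\|\boldsymbol{z}_0-\boldsymbol{y}^*\|^2$ and converts it with smoothness and strong convexity. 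You derive the same contraction from scratch instead of quoting it.
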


\begin{lemma}[Lemma 3.2, \citet{Kown23Fully}]
\label{lem:strong_convex_L} 
Under Assumption~\ref{assum:basic}, for $\lambda \geq 2 \ell / \mu$, $\mathcal{L}_\lambda(\boldsymbol{x}, \cdot)$ is $(\lambda \mu / 2)$-strongly convex.
\end{lemma}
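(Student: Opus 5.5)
The statement just above is Lemma~\ref{lem:strong_convex_L}: for $\lambda \geq 2\ell/\mu$, the function $\mathcal{L}_\lambda(\boldsymbol{x},\cdot)$ is $(\lambda\mu/2)$-strongly convex. The plan is a direct Hessian lower bound in $\boldsymbol{y}$, with $\boldsymbol{x}$ held fixed throughout.

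First, split $\mathcal{L}_\lambda(\boldsymbol{x},\boldsymbol{y}) = f(\boldsymbol{x},\boldsymbol{y}) + \lambda g(\boldsymbol{x},\boldsymbol{y}) - \lambda g(\boldsymbol{x},\boldsymbol{y}^*(\boldsymbol{x}))$. The last term does not depend on $\boldsymbol{y}$, so it affects neither convexity nor the modulus. It therefore suffices to show that $f(\boldsymbol{x},\cdot) + \lambda g(\boldsymbol{x},\cdot)$ is $(\lambda\mu/2)$-strongly convex.

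Since $f$ and $g$ are twice differentiable by Assumption~\ref{assum:basic}(a),(b), I will bound the $\boldsymbol{y}$-Hessian. Assumption~\ref{assum:basic}(c) gives that $\nabla f$ is $\ell$-Lipschitz, so $\|\nabla^2_{yy} f(\boldsymbol{x},\boldsymbol{y})\| \leq \ell$, i.e.\ $\nabla^2_{yy} f \succeq -\ell \I$. Assumption~\ref{assum:basic}(a) gives $\nabla^2_{yy} g \succeq \mu \I$. Combining,
\begin{align*}
\nabla^2_{yy}\mathcal{L}_\lambda(\boldsymbol{x},\boldsymbol{y}) \succeq (\lambda\mu - \ell)\I \succeq \Big(\lambda\mu - \frac{\lambda\mu}{2}\Big)\I = \frac{\lambda\mu}{2}\I,
\end{align*}
where the second inequality uses $\ell \leq \lambda\mu/2$, which is exactly the hypothesis $\lambda \geq 2\ell/\mu$. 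This Hessian bound, holding for all $\boldsymbol{y}$, is equivalent to $(\lambda\mu/2)$-strong convexity.

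If one prefers not to use Hessians, the same argument runs at the gradient level. Strong monotonicity of $\nabla_y g$ gives $\langle \nabla_y g(\boldsymbol{x},\boldsymbol{y}) - \nabla_y g(\boldsymbol{x},\boldsymbol{y}'), \boldsymbol{y}-\boldsymbol{y}'\rangle \geq \mu\|\boldsymbol{y}-\boldsymbol{y}'\|^2$. Cauchy--Schwarz with the $\ell$-Lipschitz gradient of $f$ bounds the $f$-part from below by $-\ell\|\boldsymbol{y}-\boldsymbol{y}'\|^2$. Multiplying the first inequality by $\lambda$ and adding yields the same modulus $\lambda\mu - \ell \geq \lambda\mu/2$.

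No step is a genuine obstacle. The only point needing care is that the subtracted value-function term is constant in $\boldsymbol{y}$, so it may be dropped.
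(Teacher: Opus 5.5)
Your proposal is correct and is essentially the standard argument. The paper gives no proof of its own and simply imports the lemma from Kwon et al., whose proof is exactly your Hessian bound $\nabla^2_{yy}\mathcal{L}_\lambda(\boldsymbol{x},\boldsymbol{y}) = \nabla^2_{yy} f + \lambda\nabla^2_{yy} g \succeq (\lambda\mu-\ell)\I \succeq (\lambda\mu/2)\I$, with the $g(\boldsymbol{x},\boldsymbol{y}^*(\boldsymbol{x}))$ term dropped as constant in $\boldsymbol{y}$.
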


It is clear that $\boldsymbol{y}^*(\boldsymbol{x})$ is $\ell/\mu$-Lipschitz. And we can also show a similar result for $\boldsymbol{y}_\lambda^*(\boldsymbol{x})$.

\begin{lemma}[Lemma B.6, \citet{Chen23Nearoptimal}]
\label{lem:gradient_smooth_y_star}  
Under Assumption~\ref{assum:basic}, for $\lambda \geq 2 \ell / \mu$, it holds that
 $\boldsymbol{y}_\lambda^*(\boldsymbol{x})$ is $\left(4 \ell / \mu\right)$-Lipschitz.
\end{lemma}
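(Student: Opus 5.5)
We will bound the derivative of $\fy_\lambda^*(\cdot)$ via the implicit function theorem, then integrate along segments to get the Lipschitz constant. Since the subtracted term $\lambda g(\fx,\fy^*(\fx))$ in $\fL_\lambda$ does not depend on $\fy$, the first-order optimality condition defining $\fy_\lambda^*(\fx)$ reads
\begin{align*}
\nabla_y f(\fx,\fy_\lambda^*(\fx)) + \lambda \nabla_y g(\fx,\fy_\lambda^*(\fx)) = \0 .
\end{align*}

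The first step is to show that the map $F(\fx,\fy):=\nabla_y f(\fx,\fy)+\lambda\nabla_y g(\fx,\fy)$ satisfies the hypotheses of the implicit function theorem. By Assumption~\ref{assum:basic}, $F$ is $C^1$. Its $\fy$-Jacobian is $\nabla^2_{yy} f+\lambda\nabla^2_{yy} g$. We have $\nabla^2_{yy} f\succeq -\ell\I$ and $\nabla^2_{yy} g\succeq \mu\I$, so for $\lambda\ge 2\ell/\mu$ this matrix satisfies $\succeq(\lambda\mu-\ell)\I\succeq(\lambda\mu/2)\I$. This is exactly Lemma~\ref{lem:strong_convex_L}, and in particular the Jacobian is invertible. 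Hence $\fy_\lambda^*(\cdot)$ is differentiable with
\begin{align*}
\nabla \fy_\lambda^*(\fx) = -\big[\nabla^2_{yy} f+\lambda\nabla^2_{yy} g\big]^{-1}\big[\nabla^2_{yx} f+\lambda\nabla^2_{yx} g\big],
\end{align*}
with all terms evaluated at $(\fx,\fy_\lambda^*(\fx))$.

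The second step bounds the norm of this derivative. Since $f$ and $g$ are $\ell$-gradient Lipschitz, the cross blocks satisfy $\|\nabla^2_{yx} f\|,\|\nabla^2_{yx} g\|\le \ell$. Combining this with the eigenvalue lower bound gives
\begin{align*}
\|\nabla \fy_\lambda^*(\fx)\|\le \frac{(1+\lambda)\ell}{\lambda\mu/2} = \frac{2\ell}{\lambda\mu}+\frac{2\ell}{\mu}\le 1+\frac{2\ell}{\mu}\le \frac{4\ell}{\mu}.
\end{align*}
Here we use $\lambda\mu\ge 2\ell$ for the first inequality. For the last inequality we use $\mu\le\ell$, which holds because $\mu$-strong convexity together with $\ell$-smoothness of $g(\fx,\cdot)$ forces $\mu\le \ell$, so that $1\le 2\ell/\mu$. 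The mean value inequality along the segment between any two points $\fx,\fx'$ then yields $\|\fy_\lambda^*(\fx)-\fy_\lambda^*(\fx')\|\le (4\ell/\mu)\|\fx-\fx'\|$.

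The only mildly delicate point is the lower bound $\nabla^2_{yy} f\succeq-\ell\I$ combined with the strong convexity of $g$, i.e.\ Lemma~\ref{lem:strong_convex_L}. If one prefers to avoid differentiability, there is a derivative-free alternative. Write the optimality conditions at $\fx$ and at $\fx'$, subtract them, and use the $(\lambda\mu/2)$-strong monotonicity of $F(\fx',\cdot)$ together with the $(1+\lambda)\ell$-Lipschitzness of $F(\cdot,\fy)$ in $\fx$. This gives the same constant $2(1+\lambda)\ell/(\lambda\mu)\le 4\ell/\mu$ directly.
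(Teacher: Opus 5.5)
Your proof is correct and follows essentially the same route as the source. The paper gives no argument of its own and imports this result as Lemma B.6 of \citet{Chen23Nearoptimal}, which rests on the same implicit-differentiation bound $\|\nabla \fy_\lambda^*(\fx)\| \le \|[\nabla^2_{yy}\fL_\lambda]^{-1}\|\,\|\nabla^2_{yx}\fL_\lambda\| \le 2(1+\lambda)\ell/(\lambda\mu) \le 4\ell/\mu$ that you derive; your derivative-free strong-monotonicity variant reaches the same constant, and since $\lambda\ge 2\ell/\mu\ge 2$ either version in fact yields $3\ell/\mu$.
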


\begin{lemma}[{\citet{nesterov2006cubic}}]
\label{lem:property_Hessain_smooth} 
Suppose Assumption \ref{assum:basic} holds, according to Proposition~\ref{prop:strong_convex_L}, we have the following inequalities for the Hessian Lipschitz continuity:
\begin{align}
    &\left\|\nabla \mathcal{L}_\lambda^*(\boldsymbol{x}^{\prime})-\nabla \mathcal{L}_\lambda^*(\boldsymbol{x})-\nabla^2 \mathcal{L}_\lambda^*(\boldsymbol{x})(\boldsymbol{x}^{\prime}-\boldsymbol{x})\right\| \leq \frac{\bar{\rho}}{2}\|\boldsymbol{x}^{\prime}-\boldsymbol{x}\|^2,\\
\label{eq:Hessian_smmoth_2}
   &\left| \mathcal{L}_\lambda^*(\boldsymbol{x}^{\prime})-\mathcal{L}_\lambda^*(\boldsymbol{x})-\langle \nabla \mathcal{L}_\lambda^*(\boldsymbol{x}), \boldsymbol{x}^{\prime}-\boldsymbol{x}\rangle-\frac{1}{2}\left \langle \nabla^2 \mathcal{L}_\lambda^*(\boldsymbol{x}) (\boldsymbol{x}^{\prime}-\boldsymbol{x}), \boldsymbol{x}^{\prime}-\boldsymbol{x}\right\rangle\right| \leq \frac{\bar{\rho}}{6}\|\boldsymbol{x}^{\prime}-\boldsymbol{x}\|^3.
\end{align}
\end{lemma}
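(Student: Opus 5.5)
The plan is to derive both inequalities from the $\bar{\rho}$-Lipschitz continuity of $\nabla^2\mathcal{L}_\lambda^*(\cdot)$ in Proposition~\ref{prop:proxy_property}(c), using integral forms of Taylor's theorem along the segment from $\boldsymbol{x}$ to $\boldsymbol{x}'$. Set $\boldsymbol{h}:=\boldsymbol{x}'-\boldsymbol{x}$ and $\boldsymbol{x}_\tau:=\boldsymbol{x}+\tau\boldsymbol{h}$ for $\tau\in[0,1]$. We need $\mathcal{L}_\lambda^*$ to be twice continuously differentiable, so we first check this. Since $\lambda\ge 2\ell/\mu$, Proposition~\ref{prop:strong_convex_L} makes $\mathcal{L}_\lambda(\boldsymbol{x},\cdot)$ strongly convex. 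Hence $\boldsymbol{y}_\lambda^*(\cdot)$ is differentiable by the implicit function theorem, and $\nabla^2\mathcal{L}_\lambda^*$ has the closed form given at the start of Section~\ref{sec:FSBA}. That Hessian is Lipschitz, hence continuous, by Proposition~\ref{prop:proxy_property}(c).

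For the first (gradient) inequality, apply the fundamental theorem of calculus to the map $\tau\mapsto\nabla\mathcal{L}_\lambda^*(\boldsymbol{x}_\tau)$:
\begin{align*}
\nabla\mathcal{L}_\lambda^*(\boldsymbol{x}')-\nabla\mathcal{L}_\lambda^*(\boldsymbol{x})-\nabla^2\mathcal{L}_\lambda^*(\boldsymbol{x})\boldsymbol{h}
=\int_0^1\big(\nabla^2\mathcal{L}_\lambda^*(\boldsymbol{x}_\tau)-\nabla^2\mathcal{L}_\lambda^*(\boldsymbol{x})\big)\boldsymbol{h}\,d\tau .
\end{align*}
Take norms inside the integral and use $\|\nabla^2\mathcal{L}_\lambda^*(\boldsymbol{x}_\tau)-\nabla^2\mathcal{L}_\lambda^*(\boldsymbol{x})\|\le\bar{\rho}\tau\|\boldsymbol{h}\|$. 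This bounds the left-hand side by $\bar{\rho}\|\boldsymbol{h}\|^2\int_0^1\tau\,d\tau=\frac{\bar{\rho}}{2}\|\boldsymbol{h}\|^2$.

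For the second (function-value) inequality \eqref{eq:Hessian_smmoth_2}, apply the same idea one level down:
\begin{align*}
\mathcal{L}_\lambda^*(\boldsymbol{x}')-\mathcal{L}_\lambda^*(\boldsymbol{x})-\langle\nabla\mathcal{L}_\lambda^*(\boldsymbol{x}),\boldsymbol{h}\rangle-\tfrac12\langle\nabla^2\mathcal{L}_\lambda^*(\boldsymbol{x})\boldsymbol{h},\boldsymbol{h}\rangle
=\int_0^1\big\langle\nabla\mathcal{L}_\lambda^*(\boldsymbol{x}_\tau)-\nabla\mathcal{L}_\lambda^*(\boldsymbol{x})-\tau\nabla^2\mathcal{L}_\lambda^*(\boldsymbol{x})\boldsymbol{h},\boldsymbol{h}\big\rangle d\tau .
\end{align*}
This identity holds because $\int_0^1\tau\,d\tau=\tfrac12$. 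The vector inside the inner product is exactly the first-inequality remainder evaluated at the pair $(\boldsymbol{x},\boldsymbol{x}_\tau)$, with displacement $\tau\boldsymbol{h}$. So by the first inequality and Cauchy--Schwarz, the integrand is bounded in absolute value by $\frac{\bar{\rho}}{2}\tau^2\|\boldsymbol{h}\|^3$. Integrating gives $\frac{\bar{\rho}}{6}\|\boldsymbol{h}\|^3$, and since we bound the absolute value of the integrand, the two-sided statement follows.

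There is no real obstacle. The only point requiring care is the preliminary step: confirming that $\mathcal{L}_\lambda^*$ is twice continuously differentiable so the integral representations are valid. This follows from Assumption~\ref{assum:basic}(a),(d), the strong convexity of the inner problem, and the explicit Hessian formula. Everything after that is the standard Nesterov--Polyak computation, applied to $\mathcal{L}_\lambda^*$ with the constant $\bar{\rho}$.
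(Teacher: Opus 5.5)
Your proposal is correct and matches the intended argument. The paper gives no proof of its own and simply cites Nesterov and Polyak (2006), whose lemma is exactly this integral-form Taylor argument: they apply it to a function with Lipschitz Hessian, and here that Lipschitz constant $\bar{\rho}$ comes from Proposition~\ref{prop:proxy_property}(c). Your regularity check is also what the lemma's appeal to Proposition~\ref{prop:strong_convex_L} is meant to cover.
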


\begin{lemma}[{\citet{nesterov2006cubic}}]
\label{lem:stationary conditions}  
For any  $M^{\prime} \geq 0 $, we denote $\boldsymbol{g}$ is the gradient of the objective function and  $\boldsymbol{H}$ is the Hessian of the objective function, then the solution $\boldsymbol{s}^*$ of the following cubic regularized quadratic problem
\begin{equation*}
\boldsymbol{s}^*=\underset{\boldsymbol{x} \in \mathbb{R}^{d_x}}{\arg \min }\left(\g^{\top} \boldsymbol{s}+\frac{1}{2} \boldsymbol{s}^{\top} \H \boldsymbol{s}+\frac{M^{\prime}}{6}\|\boldsymbol{s}\|^3\right)    
\end{equation*}
satisfies
\begin{align}
\label{eq:stationary_condition_1}
\g + \H \boldsymbol{ s}^* + \frac{M^{\prime}}{2}\left\|\boldsymbol{s}^*\right\| \boldsymbol{s}^* &= \boldsymbol{0}, \\
\label{eq:stationary_condition_2}
\H + \frac{M^{\prime}}{2}\left\|\boldsymbol{s}^*\right\| \I &\succeq \boldsymbol{0}, \\
\label{eq:stationary_condition_3}
\g^{\top} \boldsymbol{s}^* + \frac{1}{2}\left(\boldsymbol{s}^*\right)^{\top} \H  \boldsymbol{s}^* + \frac{M^{\prime}}{6}\left\|\boldsymbol{s}^*\right\|^3 &\leq -\frac{M^{\prime}}{12}\left\|\boldsymbol{s}^*\right\|^3.
\end{align}
\end{lemma}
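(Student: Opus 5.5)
The plan is to derive all three conditions from global optimality of $\fs^*$ for $m(\fs):=\g^\top\fs+\frac12\fs^\top\H\fs+\frac{M'}{6}\|\fs\|^3$. Throughout, write $r:=\|\fs^*\|$ and $\lambda:=\frac{M'}{2}r$. When $M'=0$ I assume, as the statement implicitly does, that a minimizer exists.

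\textbf{Step 1: first-order condition.} The map $\fs\mapsto\frac{M'}{6}\|\fs\|^3$ is continuously differentiable on all of $\RB^{d_x}$, including at $\fs=\0$, with gradient $\frac{M'}{2}\|\fs\|\fs$. Hence $\nabla m(\fs)=\g+\H\fs+\frac{M'}{2}\|\fs\|\fs$. Setting $\nabla m(\fs^*)=\0$ at the unconstrained minimizer gives \eqref{eq:stationary_condition_1} immediately.

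\textbf{Step 2: second-order condition \eqref{eq:stationary_condition_2}.} This is the step I expect to be the main obstacle, because the naive second-order condition $\nabla^2 m(\fs^*)\succeq\0$ is weaker than what is claimed. Instead I compare $m$ on the sphere $\{\fs:\|\fs\|=r\}$, where the cubic term is constant.

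First I establish an identity. Take any $\fs$ with $\|\fs\|=r$ and substitute $\g=-(\H+\lambda\I)\fs^*$ from Step 1. Expanding, and using $\|\fs\|^2=\|\fs^*\|^2$, one gets
\begin{align*}
m(\fs)-m(\fs^*)=\tfrac12(\fs-\fs^*)^\top(\H+\lambda\I)(\fs-\fs^*).
\end{align*}
The key point is that $-\lambda\,\fs^{*\top}\fs+\lambda r^2=\frac{\lambda}{2}\|\fs-\fs^*\|^2$ on the sphere. Since $m(\fs)\ge m(\fs^*)$, this quadratic form is nonnegative at every $\fs-\fs^*$ with $\fs$ on the sphere.

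If $r>0$, take any $\boldsymbol v$ with $\boldsymbol v^\top\fs^*\neq0$ and set $\fs=\fs^*+\alpha\boldsymbol v$ with $\alpha=-2\boldsymbol v^\top\fs^*/\|\boldsymbol v\|^2$. This $\fs$ lies on the sphere, so $\boldsymbol v^\top(\H+\lambda\I)\boldsymbol v\ge0$. Such $\boldsymbol v$ are dense in $\RB^{d_x}$, so continuity extends the inequality to all $\boldsymbol v$.

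If $r=0$, Step 1 gives $\g=\0$. Then $\fs^*=\0$ minimizes $\frac12\fs^\top\H\fs+\frac{M'}{6}\|\fs\|^3$. Suppose $\H$ had a negative eigenvalue with eigenvector $\boldsymbol u$. Taking $\fs=t\boldsymbol u$ for small $t>0$ makes the quadratic term, of order $t^2$, dominate the cubic term and gives a negative value, a contradiction. So $\H\succeq\0$ in this case as well.

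\textbf{Step 3: value bound \eqref{eq:stationary_condition_3}.} Taking the inner product of \eqref{eq:stationary_condition_1} with $\fs^*$ gives $\g^\top\fs^*=-\fs^{*\top}\H\fs^*-\frac{M'}{2}r^3$. Therefore
\begin{align*}
m(\fs^*)=-\tfrac12\fs^{*\top}\H\fs^*-\tfrac{M'}{3}r^3
=-\tfrac12\fs^{*\top}(\H+\lambda\I)\fs^*+\tfrac{M'}{4}r^3-\tfrac{M'}{3}r^3 .
\end{align*}
By Step 2 the first term is at most $0$, so $m(\fs^*)\le-\frac{M'}{12}r^3$, which is \eqref{eq:stationary_condition_3}.
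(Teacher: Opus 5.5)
Your proof is correct. The sphere identity $m(\fs)-m(\fs^*)=\frac12(\fs-\fs^*)^\top(\H+\lambda\I)(\fs-\fs^*)$ for $\|\fs\|=r$ checks out. The point $\fs^*+\alpha\boldsymbol v$ with $\alpha=-2\boldsymbol v^\top\fs^*/\|\boldsymbol v\|^2$ does lie on that sphere, and the case $r=0$ is handled properly. Step 3 gives exactly $m(\fs^*)=-\frac12\fs^{*\top}(\H+\lambda\I)\fs^*-\frac{M'}{12}r^3$. You tacitly use that $\H$ is symmetric, which holds for every matrix the paper feeds into this lemma.

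The paper gives no proof of its own and only cites Nesterov and Polyak, so the meaningful comparison is with that source. As far as I recall it, the stationarity equation and the bound \eqref{eq:stationary_condition_3} are derived there just as in your Steps 1 and 3. The semidefiniteness condition \eqref{eq:stationary_condition_2}, however, is deduced from their duality analysis of the cubic subproblem: a one-dimensional dual problem in $r\ge 0$ over the set where $\H+\frac{M'r}{2}\I\succ\0$. That route must also accommodate the degenerate (``hard'') case, in which the dual optimum sits on the boundary of that set.

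Your argument instead works directly from global optimality. You correctly recognize that the local condition $\nabla^2 m(\fs^*)\succeq\0$ is too weak, since $\nabla^2 m(\fs^*)$ exceeds $\H+\lambda\I$ by $\frac{M'}{2r}\fs^*\fs^{*\top}$. Your route is more elementary and applies verbatim to every global minimizer, which need not be unique. It also covers $M'=0$ whenever a minimizer exists. The duality route buys, in exchange, a constructive one-dimensional characterization that can be used to actually compute $\fs^*$.
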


\begin{lemma}[\cite{doikov2023second}, Lemma B.1]
\label{lem:sum_inequality}  
For any sequence of positive numbers $\left\{r_t\right\}_{t \geq 1}$, it holds for any $m \geq 1$:
\begin{equation}
\sum_{t=1}^{m-1}\left(\sum_{i=1}^t r_i\right)^3 \leq \frac{m^3}{3} \sum_{t=1}^{m-1} r_t^3.
\end{equation}
\end{lemma}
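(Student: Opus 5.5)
The statement is Lemma~\ref{lem:sum_inequality}: for positive $r_1,\dots,r_{m-1}$ and $m\ge 1$, $\sum_{t=1}^{m-1}\bigl(\sum_{i=1}^t r_i\bigr)^3\le \frac{m^3}{3}\sum_{t=1}^{m-1} r_t^3$. The plan is to bound each inner partial sum by Hölder's inequality, then exchange the order of summation and bound the resulting coefficient of each $r_i^3$ by an integral comparison.

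\textbf{Step 1 (Hölder on each partial sum).} For fixed $t$, apply Hölder with exponents $3$ and $3/2$:
\begin{equation*}
\Bigl(\sum_{i=1}^t r_i\Bigr)^3 \le \Bigl(\sum_{i=1}^t 1\Bigr)^{2}\sum_{i=1}^t r_i^3 = t^2\sum_{i=1}^t r_i^3 .
\end{equation*}
This is just convexity of $u\mapsto u^3$ (power mean) and uses only $r_i\ge 0$.

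\textbf{Step 2 (exchange the sums).} Summing over $t=1,\dots,m-1$ and swapping the order gives
\begin{equation*}
\sum_{t=1}^{m-1}\Bigl(\sum_{i=1}^t r_i\Bigr)^3\le \sum_{t=1}^{m-1} t^2\sum_{i=1}^t r_i^3=\sum_{i=1}^{m-1} r_i^3\sum_{t=i}^{m-1}t^2 .
\end{equation*}

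\textbf{Step 3 (bound the coefficient).} For every $i\ge 1$,
\begin{equation*}
\sum_{t=i}^{m-1}t^2\le\sum_{t=1}^{m-1}t^2=\frac{(m-1)m(2m-1)}{6}\le \frac{m^3}{3}.
\end{equation*}
The last inequality is equivalent to $(m-1)(2m-1)\le 2m^2$, which holds for $m\ge 1$. Alternatively, use $t^2\le\int_t^{t+1}s^2\,ds$ to get $\sum_{t=1}^{m-1}t^2\le\int_0^m s^2\,ds=m^3/3$.

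Substituting Step 3 into Step 2 gives the claim. The case $m=1$ is an empty sum, so it is trivial.

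I do not expect a real obstacle. The only point needing care is that Hölder must be applied with the weight $t^2$ kept inside the sum, rather than the crude bound $t^2\le m^2$. The crude bound would give a total factor of $m^3$ instead of $m^3/3$, and the $1/3$ in the constant comes precisely from $\sum t^2\approx m^3/3$.
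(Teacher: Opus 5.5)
Your proof is correct and complete: the Hölder bound $\bigl(\sum_{i=1}^t r_i\bigr)^3\le t^2\sum_{i=1}^t r_i^3$ holds, the exchange of summation is right, and so is the estimate $\sum_{t=1}^{m-1}t^2=\frac{(m-1)m(2m-1)}{6}\le\frac{m^3}{3}$ (which reduces to $3m\ge 1$); the case $m=1$ is vacuous. The paper gives no proof of this lemma and only imports it from \citet{doikov2023second}, so there is no in-paper argument to compare against, and your direct self-contained argument suffices.
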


\section{The Proof of Section~\ref{sec:FSBA}}

\subsection{The Proof of Lemma~\ref{lem:estimators_error}}
\begin{proof}
We first need to derive an upper bound for the following equations:
\begin{align*}
&\left\|\nabla_{x y}^2 g\left(\boldsymbol{x}, \boldsymbol{y}^*(\boldsymbol{x})\right)\left[\nabla_{y y}^2 g\left(\boldsymbol{x}, \boldsymbol{y}^*(\boldsymbol{x})\right)\right]^{-1}-\nabla_{x y}^2 g\left(\boldsymbol{x}, \boldsymbol{w}\right)\left[\nabla_{y y}^2 g\left(\boldsymbol{x}, \boldsymbol{w}\right)\right]^{-1} \right\| \\
\text{and}~~~&\left\|\nabla_{x y}^2 \mathcal{L}_\lambda\left(\boldsymbol{x}, \boldsymbol{y}_\lambda^*(\boldsymbol{x})\right)\left[\nabla_{y y}^2 \mathcal{L}_\lambda\left(\boldsymbol{x}, \boldsymbol{y}_\lambda^*(\boldsymbol{x})\right)\right]^{-1}-\nabla_{x y}^2 \mathcal{L}_\lambda\left(\boldsymbol{x}, \boldsymbol{y}\right)\left[\nabla_{y y}^2 \mathcal{L}_\lambda\left(\boldsymbol{x}, \boldsymbol{y}\right)\right]^{-1} \right\|.
\end{align*}

Using the matrix identity $A^{-1}-B^{-1}=A^{-1}(B-A) B^{-1}$, we have
\begin{equation*}
\begin{aligned}
& \left\| \left[\nabla_{y y}^2 g\left(\boldsymbol{x}, \boldsymbol{y}^*(\boldsymbol{x})\right)\right]^{-1} -
\left[\nabla_{y y}^2 g\left(\boldsymbol{x}, \boldsymbol{w}\right)\right]^{-1}\right\|\\
\leq & \left\| \left[\nabla_{y y}^2 g\left(\boldsymbol{x}, \boldsymbol{y}^*(\boldsymbol{x})\right)\right]^{-1} \right\|
\left\| \nabla_{y y}^2 g\left(\boldsymbol{x}, \boldsymbol{w}\right)- \nabla_{y y}^2 g\left(\boldsymbol{x}, \boldsymbol{y}^*(\boldsymbol{x})\right)\right\|
\left\| \left[\nabla_{y y}^2 g\left(\boldsymbol{x}, \boldsymbol{w}\right)\right]^{-1} \right\| \\
\leq & \frac{\rho}{\mu^2} \left\|\boldsymbol{w}- \boldsymbol{y}^*(\boldsymbol{x}) \right\|,
\end{aligned}    
\end{equation*}
and we further have 
\begin{equation*}
\begin{aligned}
& \left\|\nabla_{x y}^2 g\left(\boldsymbol{x}, \boldsymbol{y}^*(\boldsymbol{x})\right)\left[\nabla_{y y}^2 g\left(\boldsymbol{x}, \boldsymbol{y}^*(\boldsymbol{x})\right)\right]^{-1}-\nabla_{x y}^2 g\left(\boldsymbol{x}, \boldsymbol{w}\right)\left[\nabla_{y y}^2 g\left(\boldsymbol{x}, \boldsymbol{w}\right)\right]^{-1} \right\|\\
\leq & \left\|\nabla_{x y}^2 g\left(\boldsymbol{x}, \boldsymbol{y}^*(\boldsymbol{x})\right)- \nabla_{x y}^2 g\left(\boldsymbol{x}, \boldsymbol{w}\right) \right\| 
\left\|\left[\nabla_{y y}^2 g\left(\boldsymbol{x}, \boldsymbol{y}^*(\boldsymbol{x})\right)\right]^{-1}\right\| \\
& +\left\| \nabla_{x y}^2 g\left(\boldsymbol{x}, \boldsymbol{w}\right) \right\|\left\| \left[\nabla_{y y}^2 g\left(\boldsymbol{x}, \boldsymbol{y}^*(\boldsymbol{x})\right)\right]^{-1} -
\left[\nabla_{y y}^2 g\left(\boldsymbol{x}, \boldsymbol{w}\right)\right]^{-1} \right\| \\
\leq & \left(\frac{\rho}{\mu}+\frac{\ell \rho}{\mu^2}\right)\left\|\boldsymbol{w}- \boldsymbol{y}^*(\boldsymbol{x}) \right\|.
\end{aligned}    
\end{equation*}

Similarly, using the matrix identity $A^{-1}-B^{-1}=A^{-1}(B-A) B^{-1}$, we have
\begin{equation*}
\begin{aligned}
& \left\|\left[\nabla_{y y}^2 \mathcal{L}_\lambda\left(\boldsymbol{x}, \boldsymbol{y}_\lambda^*(\boldsymbol{x})\right)\right]^{-1}-
\left[\nabla_{y y}^2 \mathcal{L}_\lambda\left(\boldsymbol{x}, \boldsymbol{y}\right)\right]^{-1} \right\|\\
\leq & \left\| \left[\nabla_{y y}^2 \mathcal{L}_\lambda\left(\boldsymbol{x}, \boldsymbol{y}_\lambda^*(\boldsymbol{x})\right)\right]^{-1} \right\|
\left\| \nabla_{y y}^2 \mathcal{L}_\lambda\left(\boldsymbol{x}, \boldsymbol{y}\right)- \nabla_{y y}^2 \mathcal{L}_\lambda\left(\boldsymbol{x}, \boldsymbol{y}_\lambda^*(\boldsymbol{x})\right)\right\|
\left\| \left[\nabla_{y y}^2 \mathcal{L}_\lambda\left(\boldsymbol{x}, \boldsymbol{y}\right)\right]^{-1} \right\| \\
\leq & \frac{4\left( \rho + \lambda \rho \right) }{\lambda^2 \mu^2} \left\|\boldsymbol{y}- \boldsymbol{y}_\lambda^*(\boldsymbol{x}) \right\|,
\end{aligned}   
\end{equation*}

and we further have 
\begin{equation*}
\begin{aligned}
& \left\|\nabla_{x y}^2 \mathcal{L}_\lambda\left(\boldsymbol{x}, \boldsymbol{y}_\lambda^*(\boldsymbol{x})\right)\left[\nabla_{y y}^2 \mathcal{L}_\lambda\left(\boldsymbol{x}, \boldsymbol{y}_\lambda^*(\boldsymbol{x})\right)\right]^{-1}-\nabla_{x y}^2 \mathcal{L}_\lambda\left(\boldsymbol{x}, \boldsymbol{y}\right)\left[\nabla_{y y}^2 \mathcal{L}_\lambda\left(\boldsymbol{x}, \boldsymbol{y}\right)\right]^{-1} \right\|\\
\leq & \left\|\nabla_{x y}^2 \mathcal{L}_\lambda\left(\boldsymbol{x}, \boldsymbol{y}_\lambda^*(\boldsymbol{x})\right)-\nabla_{x y}^2 \mathcal{L}_\lambda\left(\boldsymbol{x}, \boldsymbol{y}\right) \right\|
\left\|\left[\nabla_{y y}^2 \mathcal{L}_\lambda\left(\boldsymbol{x}, \boldsymbol{y}_\lambda^*(\boldsymbol{x})\right)\right]^{-1}\right\| \\
& +\left\| \nabla_{x y}^2 \mathcal{L}_\lambda\left(\boldsymbol{x}, \boldsymbol{y}\right) \right\|\left\| \left[\nabla_{y y}^2 \mathcal{L}_\lambda\left(\boldsymbol{x}, \boldsymbol{y}_\lambda^*(\boldsymbol{x})\right)\right]^{-1} -
\left[\nabla_{y y}^2 \mathcal{L}_\lambda\left(\boldsymbol{x}, \boldsymbol{y}\right)\right]^{-1}  \right\| \\
\leq & \left(\frac{2 \left( \rho + \lambda \rho\right)}{\lambda \mu}+\frac{4\left( \rho + \lambda \rho \right) \left(\ell +\lambda \ell\right) }{\lambda^2 \mu^2}\right)\left\|\boldsymbol{y}- \boldsymbol{y}_\lambda^*(\boldsymbol{x}) \right\|.
\end{aligned}    
\end{equation*}

According to 
\begin{equation*}
\nabla \mathcal{L}_\lambda^*(\boldsymbol{x}) =\nabla_x f\left(\boldsymbol{x}, \boldsymbol{y}_\lambda^*(\boldsymbol{x})\right)+\lambda\left(\nabla_x g\left(\boldsymbol{x}, \boldsymbol{y}_\lambda^*(\boldsymbol{x})\right)-\nabla_x g\left(\boldsymbol{x}, \boldsymbol{y}^*(\boldsymbol{x})\right)\right),   
\end{equation*}
and     
\begin{equation*}
\g(\fx;\fy,\fw)=\nabla_x f\left(\boldsymbol{x}, \boldsymbol{y}\right)+\lambda\left(\nabla_x g\left(\boldsymbol{x}, \boldsymbol{y}\right)-\nabla_x g\left(\boldsymbol{x}, \boldsymbol{w}\right)\right).  
\end{equation*}

Then we have
\begin{equation*}
\begin{aligned}
& \left\|{\nabla} \mathcal{L}_\lambda^*(\fx)-\g(\fx;\fy,\fw)\right\|  \\
\leq & \left\|\nabla_x f\left(\boldsymbol{x},\boldsymbol{y}\right)-\nabla_x f\left(\boldsymbol{x}, \boldsymbol{y}_\lambda^*(\boldsymbol{x})\right) \right\| + \lambda \left\|\nabla_x g\left(\boldsymbol{x}, \boldsymbol{y}\right)-\nabla_x g\left(\boldsymbol{x}, \boldsymbol{y}_\lambda^*(\boldsymbol{x})\right) \right\|\\
+ & \lambda \left\|\nabla_x g\left(\boldsymbol{x},\boldsymbol{w}\right)-\nabla_x g\left(\boldsymbol{x}, \boldsymbol{y}^*(\boldsymbol{x})\right)\right\| \\
\leq &  2 \lambda \ell \left\|\boldsymbol{y}-\fy_\lambda^*\left(\boldsymbol{x}\right)\right\|+\lambda \ell \left\|\boldsymbol{w}-\fy^*\left(\boldsymbol{x}\right)\right\|.
\end{aligned}    
\end{equation*}

Note that 
\begin{align*}
\begin{split}
   & \nabla^2 \mathcal{L}_\lambda^*(\boldsymbol{x})= \nabla_{x x}^2 f\left(\boldsymbol{x}, \fy_\lambda^*(\boldsymbol{x})\right)
    -\nabla_{x y}^2 \mathcal{L}_\lambda\left(\boldsymbol{x}, \fy_\lambda^*(\boldsymbol{x})\right)\left[\nabla_{y y}^2 \mathcal{L}_\lambda\left(\boldsymbol{x}, \fy_\lambda^*(\boldsymbol{x})\right)\right]^{-1}\nabla_{y x}^2 \mathcal{L}_\lambda\left(\boldsymbol{x}, \fy_\lambda^*(\boldsymbol{x})\right)\\
&       +\lambda\left(\nabla_{x x}^2 g\left(\boldsymbol{x}, \fy_\lambda^*(\boldsymbol{x})\right)\!-\!\nabla_{x x}^2 g\left(\boldsymbol{x}, \fy^*(\boldsymbol{x})\right) \!+\!\nabla_{x y}^2 g\left(\boldsymbol{x}, \fy^*(\boldsymbol{x})\right)\left[\nabla_{y y}^2 g\left(\boldsymbol{x}, \fy^*(\boldsymbol{x})\right)\right]^{-1}\nabla_{y x}^2 g\left(\boldsymbol{x}, \fy^*(\boldsymbol{x})\right)\right),
\end{split}
\end{align*}
and
\begin{align*}
\begin{split}
   &\H(\fx;\fy,\fw) : =  \nabla_{x x}^2 f\left(\boldsymbol{x}, \fy\right)-\nabla_{x y}^2 \mathcal{L}_\lambda\left(\boldsymbol{x}, \fy\right)\left[\nabla_{y y}^2 \mathcal{L}_\lambda\left(\boldsymbol{x}, \fy\right)\right]^{-1}\nabla_{y x}^2 \mathcal{L}_\lambda\left(\boldsymbol{x}, \fy\right)\\
   &~~~~
   +\lambda\left(\nabla_{x x}^2 g\left(\boldsymbol{x}, \fy\right)-\nabla_{x x}^2 g\left(\boldsymbol{x}, \fw\right) +\nabla_{x y}^2 g\left(\boldsymbol{x}, \fw\right)\left[\nabla_{y y}^2 g\left(\boldsymbol{x}, \fw\right)\right]^{-1}\nabla_{y x}^2 g\left(\boldsymbol{x}, \fw\right)\right), 
\end{split}
\end{align*}

We can obtain the following inequalities:
\begin{align*}
\left\| \nabla_{x x}^2 f\left(\boldsymbol{x}, \boldsymbol{y}_\lambda^*(\boldsymbol{x})\right) 
- \nabla_{x x}^2 f\left(\boldsymbol{x}, \boldsymbol{y}\right) \right\| 
&\leq \rho \left\|\boldsymbol{y} - \boldsymbol{y}_\lambda^*\left(\boldsymbol{x}\right)\right\|, \\
\lambda \left\|\nabla_{x x}^2 g\left(\boldsymbol{x}, \boldsymbol{y}_\lambda^*(\boldsymbol{x})\right)
- \nabla_{x x}^2 g\left(\boldsymbol{x}, \boldsymbol{y}\right) \right\| 
&\leq \lambda \rho \left\|\boldsymbol{y} - \boldsymbol{y}_\lambda^*\left(\boldsymbol{x}\right)\right\|, \\
\lambda \left\|\nabla_{x x}^2 g\left(\boldsymbol{x}, \fy^*(\boldsymbol{x})\right)
- \nabla_{x x}^2 g\left(\boldsymbol{x}, \boldsymbol{w}\right)\right\| 
&\leq \lambda \rho \left\|\boldsymbol{w} - \fy^*\left(\boldsymbol{x}\right) \right\|,
\end{align*}
and
{
\begin{equation*}
\begin{aligned}
& \left\|\nabla_{x y}^2 g\left(\boldsymbol{x}, \fy^*(\boldsymbol{x})\right)\left[\nabla_{y y}^2 g\left(\boldsymbol{x}, \fy^*(\boldsymbol{x})\right)\right]^{-1}\nabla_{y x}^2 g\left(\boldsymbol{x}, \fy^*(\boldsymbol{x})\right)-  \nabla_{x y}^2 g\left(\boldsymbol{x}, \fw\right)\left[\nabla_{y y}^2 g\left(\boldsymbol{x}, \fw\right)\right]^{-1}\nabla_{y x}^2 g\left(\boldsymbol{x}, \fw\right)\right\| \\
& \leq \left\| \nabla_{x y}^2 g\left(\boldsymbol{x}, \fy^*(\boldsymbol{x})\right)\left[\nabla_{y y}^2 g\left(\boldsymbol{x}, \fy^*(\boldsymbol{x})\right)\right]^{-1}\right\| \left\| \nabla_{y x}^2 g\left(\boldsymbol{x}, \fy^*(\boldsymbol{x})\right)- \nabla_{y x}^2g\left(\boldsymbol{x}, \boldsymbol{w}\right) \right\| \\
& ~~~~+ \left\| \nabla_{y x}^2g\left(\boldsymbol{x}, \boldsymbol{w}\right) \right\| \left\|\nabla_{x y}^2 g\left(\boldsymbol{x}, \fy^*(\boldsymbol{x})\right)\left[\nabla_{y y}^2 g\left(\boldsymbol{x}, \fy^*(\boldsymbol{x})\right)\right]^{-1}-\nabla_{x y}^2 g\left(\boldsymbol{x}, \boldsymbol{w}\right)\left[\nabla_{y y}^2 g\left(\boldsymbol{x}, \boldsymbol{w}\right)\right]^{-1} \right\| \\
& \leq\left(\frac{2 \ell \rho}{\mu}+ \frac{\ell^2 \rho}{\mu^2} \right) \left\|\boldsymbol{w}-\fy^*\left(\boldsymbol{x}\right) \right\|,
\end{aligned}    
\end{equation*}
}
and
{
\begin{equation*}
\begin{aligned}
& \left\|\nabla_{x y}^2 \mathcal{L}_\lambda\left(\boldsymbol{x}, \fy_\lambda^*(\boldsymbol{x})\right)\left[\nabla_{y y}^2 \mathcal{L}_\lambda\left(\boldsymbol{x}, \fy_\lambda^*(\boldsymbol{x})\right)\right]^{-1}\nabla_{y x}^2 \mathcal{L}_\lambda\left(\boldsymbol{x}, \fy_\lambda^*(\boldsymbol{x})\right)-\nabla_{x y}^2 \mathcal{L}_\lambda\left(\boldsymbol{x}, \fy\right)\left[\nabla_{y y}^2 \mathcal{L}_\lambda\left(\boldsymbol{x}, \fy\right)\right]^{-1}\nabla_{y x}^2 \mathcal{L}_\lambda\left(\boldsymbol{x}, \fy\right)\right\| \\
& \leq \left\| \nabla_{x y}^2 \mathcal{L}_\lambda\left(\boldsymbol{x}, \fy_\lambda^*(\boldsymbol{x})\right)\left[\nabla_{y y}^2 \mathcal{L}_\lambda\left(\boldsymbol{x}, \fy_\lambda^*(\boldsymbol{x})\right)\right]^{-1}\right\| \left\| \nabla_{y x}^2 \mathcal{L}_\lambda\left(\boldsymbol{x}, \fy_\lambda^*(\boldsymbol{x})\right)- \nabla_{y x}^2 \mathcal{L}_\lambda\left(\boldsymbol{x},\boldsymbol{y}\right) \right\| \\
&~~~~~~+ \left\| \nabla_{y x}^2 \mathcal{L}_\lambda\left(\boldsymbol{x},\boldsymbol{y}\right)\right\| \left\|\nabla_{x y}^2 \mathcal{L}_\lambda\left(\boldsymbol{x}, \fy_\lambda^*(\boldsymbol{x})\right)\left[\nabla_{y y}^2 \mathcal{L}_\lambda\left(\boldsymbol{x}, \fy_\lambda^*(\boldsymbol{x})\right)\right]^{-1}-\nabla_{x y}^2 \mathcal{L}_\lambda\left(\boldsymbol{x}, \boldsymbol{y}\right)\left[\nabla_{y y}^2 \mathcal{L}_\lambda\left(\boldsymbol{x}, \boldsymbol{y}\right)\right]^{-1} \right\|\\
& \leq\left[\frac{2 (\ell+\lambda \ell) \left( \rho+\lambda \rho\right)}{\lambda \mu} +
\left( \ell + \lambda \ell\right)
\left(\frac{2 \left( \rho + \lambda \rho\right)}{\lambda \mu}+\frac{4\left( \rho + \lambda \rho \right) \left(\ell +\lambda \ell
\right) }{\lambda^2 \mu^2}\right) \right]\left\|\boldsymbol{y}-\fy_\lambda^*(\boldsymbol{x}) \right\|.
\end{aligned}    
\end{equation*}
}

Combining the above inequations, we have 
\begin{equation*}
\left\|{\nabla}^2 \mathcal{L}_\lambda^*\left(\boldsymbol{x}\right)-\H(\fx;\fy,\fw)\right\| \leq C_1 \left\|\boldsymbol{w}-\fy^*\left(\boldsymbol{x}\right) \right\| + C_2
 \left\|\boldsymbol{y} -\fy_\lambda^*\left(\boldsymbol{x}\right)\right\|,    
\end{equation*}
 where $C_1= \lambda \rho+\frac{2 \ell \rho}{\mu}+ \frac{\ell^2 \rho}{\mu^2}$, $C_2=\rho+\lambda \rho +\left( \ell + \lambda \ell \right)
\left(\frac{4 \left( \rho + \lambda \rho\right)}{\lambda \mu}+\frac{4\left( \rho + \lambda \rho \right) \left(\ell +\lambda \ell \right) }{\lambda^2 \mu^2}\right)$.
\end{proof}

\subsection{The Proof of Lemma~\ref{lem:exact_enough}}
\begin{proof}
We first use induction to show that
\begin{equation}
\label{eq:error_y_w}
\left\|\boldsymbol{y}_t-\fy_\lambda^*\left(\boldsymbol{x}_t\right)\right\| \leq \tilde{\epsilon}, \left\|\boldsymbol{w}_t-\fy^*\left(\boldsymbol{x}_t\right)\right\| \leq \tilde{\epsilon}
\end{equation} 
holds for any $t \geq 0$. For $t=0$,
 Lemma~\ref{lem:AGD} directly implies
\begin{align*}
& \left\|\boldsymbol{y}_0-\fy_\lambda^*\left(\boldsymbol{x}_0\right)\right\| \\ 
\leq & \sqrt{\kappa_2+1}\left(1-\frac{1}{\sqrt{\kappa_2}}\right)^{K_0^2 /2}\left\|\boldsymbol{y}_{-1}-\fy_\lambda^*\left(\boldsymbol{x}_0\right)\right\|\\ 
= & \sqrt{\kappa_2+1}\left(1-\frac{1}{\sqrt{\kappa_2}}\right)^{K_0^2 /2}\left\|\fy_\lambda^*\left(\boldsymbol{x}_0\right)\right\|\\
\leq & \tilde{\epsilon},   
\end{align*}

\begin{align*}
 & \left\|\boldsymbol{w}_0-\fy^*\left(\boldsymbol{x}_0\right)\right\| \\ 
\leq & \sqrt{\kappa_1+1}\left(1-\frac{1}{\sqrt{\kappa_1}}\right)^{K_0^1 /2}\left\|\boldsymbol{w}_{-1}-\fy^*\left(\boldsymbol{x}_0\right)\right\|\\ 
= & \sqrt{\kappa_1+1}\left(1-\frac{1}{\sqrt{\kappa_1}}\right)^{K_0^1 /2}\left\|\fy^*\left(\boldsymbol{x}_0\right)\right\| \\
\leq & \tilde{\epsilon}.  
\end{align*}
The above two blocks of inequalities are justified as follows: the first inequality is based on Lemma \ref{lem:AGD}; the second equation uses the initialization of $\boldsymbol{y}_{-1}$ and $\boldsymbol{w}_{-1}$ ; the last step use the deﬁnition of $K_0^1$, $K_0^2$ and $\tilde{\epsilon}$.

Suppose it holds that $\left\|\boldsymbol{w}_{t-1}-\fy^*\left(\boldsymbol{x}_{t-1}\right)\right\| \leq \tilde{\epsilon}$ and $\left\|\boldsymbol{y}_{t-1}-\fy_\lambda^*\left(\boldsymbol{x}_{t-1}\right)\right\| \leq \tilde{\epsilon}$
for any $t=t^{\prime}-1$,  then we have
\begin{align*}
& \left\|\boldsymbol{w}_{t^{\prime}}-\fy^*\left(\boldsymbol{x}_{t^{\prime}}\right)\right\| \\
\leq & \sqrt{\kappa_1+1}\left(1-\frac{1}{\sqrt{\kappa_1}}\right)^{K_{t^{\prime}}^1 / 2}\left\|\boldsymbol{w}_{t^{\prime}-1}-\fy^*\left(\boldsymbol{x}_{t^{\prime}}\right)\right\| \\
\leq & \sqrt{\kappa_1+1}\left(1-\frac{1}{\sqrt{\kappa_1}}\right)^{K_{t^{\prime}}^1 / 2}\left(\left\|\boldsymbol{w}_{t^{\prime}-1}-\fy^*\left(\boldsymbol{x}_{t^{\prime}-1}\right)\right\|+\left\|\fy^*\left(\boldsymbol{x}_{t^{\prime}-1}\right)-\fy^*\left(\boldsymbol{x}_{t^{\prime}}\right)\right\|\right) \\
\leq & \sqrt{\kappa_1+1}\left(1-\frac{1}{\sqrt{\kappa_1}}\right)^{K_{t^{\prime}}^1 / 2}\left(\tilde{\epsilon}+\kappa\left\|\boldsymbol{x}_{t^{\prime}-1}-\boldsymbol{x}_{t^{\prime}}\right\|\right) \\
= & \sqrt{\kappa_1+1}\left(1-\frac{1}{\sqrt{\kappa_1}}\right)^{K_{t^{\prime}}^1 / 2}\left(\tilde{\epsilon}+\kappa\left\|\boldsymbol{s}_{t^{\prime}-1}^*\right\|\right) \leq \tilde{\epsilon},    
\end{align*}

\begin{align*}
& \left\|\boldsymbol{y}_{t^{\prime}}-\fy_\lambda^*\left(\boldsymbol{x}_{t^{\prime}}\right)\right\| \\
\leq & \sqrt{\kappa_2+1}\left(1-\frac{1}{\sqrt{\kappa_2}}\right)^{K_{t^{\prime}}^2 / 2}\left\|\boldsymbol{y}_{t^{\prime}-1}-\fy_\lambda^*\left(\boldsymbol{x}_{t^{\prime}}\right)\right\| \\
\leq & \sqrt{\kappa_2+1}\left(1-\frac{1}{\sqrt{\kappa_2}}\right)^{K_{t^{\prime}}^2 / 2}\left(\left\|\boldsymbol{y}_{t^{\prime}-1}-\fy^*(\boldsymbol{x}_{t^{\prime}-1})\right\|+\left\|\fy_\lambda^*\left(\boldsymbol{x}_{t^{\prime}-1}\right)-\fy_\lambda^*\left(\boldsymbol{x}_{t^{\prime}}\right)\right\|\right) \\
\leq & \sqrt{\kappa_2+1}\left(1-\frac{1}{\sqrt{\kappa_2}}\right)^{K_{t^{\prime}}^2 / 2}\left(\tilde{\epsilon}+4\kappa\left\|\boldsymbol{x}_{t^{\prime}-1}-\boldsymbol{x}_{t^{\prime}}\right\|\right) \\
= & \sqrt{\kappa_2+1}\left(1-\frac{1}{\sqrt{\kappa_2}}\right)^{K_{t^{\prime}}^2 / 2}\left(\tilde{\epsilon}+4\kappa\left\|\boldsymbol{s}_{t^{\prime}-1}^*\right\|\right) \leq \tilde{\epsilon}.   
\end{align*}
The above two blocks of inequalities are justified as follows: the first inequality is based on Lemma \ref{lem:AGD}; the second one uses triangle inequality; the third one is based on the hypothesis of induction and Proposition~\ref{prop:condinum_y_star} and Lemma~\ref{lem:gradient_smooth_y_star}; the last step uses the definition of $K_t^1$, $K_t^2$ and $\tilde{\epsilon}$.

Combining inequality \eqref{eq:error_y_w} with Lemma \ref{lem:estimators_error}, we obtain
\begin{align*}
\left\|\nabla \mathcal{L}_\lambda^*\left(\boldsymbol{x}_t\right)-\g(\fx_t;\fy_t,\fw_t)\right\|
&\leq 2 \lambda \ell \left\|\boldsymbol{y}_t - \fy_\lambda^*\left(\boldsymbol{x}_t\right)\right\|
+ \lambda \ell\left\|\boldsymbol{w}_t - \fy^*\left(\boldsymbol{x}_t\right)\right\| \leq C_g \epsilon, \\
\left\|\nabla^2 \mathcal{L}_\lambda^*(\boldsymbol{x}_t)-\H(\fx_t;\fy_t,\fw_t)\right\|
&\leq C_1 \left\|\boldsymbol{w}_t - \fy^*\left(\boldsymbol{x}_t\right) \right\| + C_2
\left\|\boldsymbol{y}_t - \fy_\lambda^*\left(\boldsymbol{x}_t\right)\right\| \leq C_H \sqrt{M \epsilon}.
\end{align*}
\end{proof}

\subsection{The Proof of Theorem~\ref{thm:FSBA}}
\begin{proof}
Let $M=\Omega( \bar{\rho})$,
$T = \Theta\left((\varphi(\fx_0)-\varphi^*)\sqrt{M}\epsilon^{-3/2}\right)$ and the setting of $\lambda$,
then we can prove that the output
$\hat{\fx}$ of Algorithm~\ref{alg:FSBA} is an $\left((\OM(\epsilon),\OM(\kappa^{2.5}\bar{\ell}^{0.5}\epsilon^{0.5})\right)$-SOSP of $\varphi(\cdot)$.

Since the algorithm~\ref{alg:FSBA} could find an $(\epsilon, \sqrt{M \epsilon})$-SOSP of $\mathcal{L}_\lambda^*(\boldsymbol{x})$ in Lemma~\ref{lem:inexact-cubic},
then we have
\begin{equation*}
\left\|\nabla \mathcal{L}_\lambda^*\left(\boldsymbol{x}\right)\right\| \leq \epsilon, \quad
\nabla^2 \mathcal{L}_\lambda^*\left(\boldsymbol{x}\right) \succeq -\sqrt{M \epsilon} I. 
\end{equation*}

According to Proposition~\ref{prop:proxy_property}, we have
\begin{align*}
\left\|\nabla \mathcal{L}_\lambda^*(\boldsymbol{x}) - \nabla \varphi(\boldsymbol{x})\right\| &= \mathcal{O}\left(\frac{\bar{\ell} \kappa^3}{\lambda}\right), \quad \forall \boldsymbol{x} \in \mathbb{R}^{d_x}, \\
\left\|\nabla^2 \mathcal{L}_\lambda^*(\boldsymbol{x}) - \nabla^2 \varphi(\boldsymbol{x})\right\| &= \mathcal{O}\left(\frac{\bar{\ell} \kappa^5}{\lambda}\right), \quad \forall \boldsymbol{x} \in \mathbb{R}^{d_x},\\
\left|\mathcal{L}_\lambda^*(\boldsymbol{x}) - \varphi(\boldsymbol{x})\right|& = \mathcal{O}\left(\frac{\bar{\ell} \kappa^2}{\lambda}\right), \quad  \forall \boldsymbol{x} \in \mathbb{R}^{d_x}.
\end{align*}

With $\lambda \geq \bar{\ell} \kappa^3 / \epsilon $,
we have
\begin{align*}
\left\|\nabla \varphi(\boldsymbol{x})\right\| 
&= \left\|\nabla \varphi(\boldsymbol{x})-\nabla \mathcal{L}_\lambda^*(\boldsymbol{x}) + \nabla \mathcal{L}_\lambda^*(\boldsymbol{x})\right\| \\
&\leq \left\|\nabla \mathcal{L}_\lambda^*(\boldsymbol{x}) - \nabla \varphi(\boldsymbol{x})\right\| + \left\|\nabla \mathcal{L}_\lambda^*\left(\boldsymbol{x}\right)\right\| \\
&\leq \mathcal{O}\left(\frac{\bar{\ell} \kappa^3}{\lambda}\right) + \epsilon \\
& \leq \mathcal{O}(\epsilon).
\end{align*}

With $\lambda \geq \bar{\ell} \kappa^5 / \sqrt{M\epsilon}$, we have
\begin{equation*}
    \nabla^2 \varphi(\boldsymbol{x}) \succeq \nabla^2 \mathcal{L}_\lambda^*(\boldsymbol{x})-\mathcal{O}\left(\frac{\bar{\ell} \kappa^5}{\lambda}\right)I \succeq -\sqrt{M \epsilon} I -\mathcal{O}\left(\frac{\bar{\ell} \kappa^5}{\lambda}\right)I \succeq -\mathcal{O}(\sqrt{M \epsilon}) I.
\end{equation*}

With $\lambda \geq \bar{\ell} \kappa^2 / \Delta$, we have
\begin{align*}
\mathcal{L}_\lambda^*\left(\boldsymbol{x}_0\right) - \min\fL_{\lambda}^*(\fx)
&= 
\mathcal{L}_\lambda^*\left(x_0\right)-\min\fL_{\lambda}^*(\fx) +\varphi\left(\boldsymbol{x}_0\right)-\varphi^* -
\varphi\left(\boldsymbol{x}_0\right) +
\varphi^*\\
&= \Delta + 2 \mathcal{O}\left(\frac{\bar{\ell} \kappa^2}{\lambda}\right)\\
&=\mathcal{O}(\Delta).
\end{align*}

We now proceed to establish the first-order oracle complexity.
{
\begin{align*}
& \sum_{t=0}^{T-1}(K^1_t+K^2_t) \\
\leq & 4 \sqrt{3\kappa}\left[\log \left(\frac{\sqrt{3\kappa+1}}{\tilde{\epsilon}}R\right)+\sum_{t=1}^T \log \left(\sqrt{3\kappa+1}+\frac{4\kappa \sqrt{3\kappa+1}}{\tilde{\epsilon}}\left\|\boldsymbol{s}_{t-1}^*\right\|\right)\right]+2T \\
= & \frac{4 \sqrt{3\kappa}}{3}\left[3 \log \left(\frac{\sqrt{3\kappa+1}}{\tilde{\epsilon}}R\right)+\sum_{t=1}^T \log \left(\sqrt{3\kappa+1}+\frac{4\kappa \sqrt{3\kappa+1}}{\tilde{\epsilon}}\left\|\boldsymbol{s}_{t-1}^*\right\|\right)^3\right]+2T \\
\leq & \frac{4 \sqrt{3\kappa}}{3}\left[3 \log \left(\frac{\sqrt{3\kappa+1}}{\tilde{\epsilon}}R\right)+\sum_{t=1}^T \log \left(8(3\kappa+1)^{1.5}+\frac{8 (4\kappa)^3(3\kappa+1)^{1.5}}{\tilde{\epsilon}^3}\left\|\boldsymbol{s}_{t-1}^*\right\|^3\right)\right]+2T \\
= & \frac{4 \sqrt{3\kappa}}{3}\left[3 \log \left(\frac{\sqrt{3\kappa+1}}{\tilde{\epsilon}}R\right)+\log \left(\prod_{t=1}^T\left(8(3\kappa+1)^{1.5}+\frac{8 (4\kappa)^3(3\kappa+1)^{1.5}}{\tilde{\epsilon}^3}\left\|\boldsymbol{s}_{t-1}^*\right\|^3\right)\right)\right]+2T \\
\leq & \frac{4 \sqrt{3\kappa}}{3}\left[3 \log \left(\frac{\sqrt{3\kappa+1}}{\tilde{\epsilon}}R\right)+\log \left(\frac{1}{T} \sum_{t=1}^T\left(8(3\kappa+1)^{1.5}+\frac{8 (4\kappa)^3(3\kappa+1)^{1.5}}{\tilde{\epsilon}^3}\left\|\boldsymbol{s}_{t-1}^*\right\|^3\right)\right)^T\right]+2T \\
= & \frac{4 \sqrt{3\kappa} T}{3}\left[\frac{3}{T} \log \left(\frac{\sqrt{3\kappa+1}}{\tilde{\epsilon}}R\right)+\log \left(8(3\kappa+1)^{1.5}+\frac{8 (4\kappa)^3(3\kappa+1)^{1.5}}{T \tilde{\epsilon}^3} \sum_{t=1}^T\left\|\boldsymbol{s}_{t-1}^*\right\|^3\right)\right]+2T,   
\end{align*}
}
\!\!where the first inequality is based on the fact $(a+b)^3 \leq 8\left(a^3+b^3\right)$ for $a, b \geq 0$; the second inequality
is based on AM–GM inequality.

Connecting the upper bound of $\sum_{t=1}^T\left\|\boldsymbol{s}_{t-1}^*\right\|^3$  in the proof of Lemma~\ref{lem:inexact-cubic}:
\begin{equation*}
\mathcal{L}_\lambda^*\left(\boldsymbol{x}_0\right) -\min\fL_{\lambda}^*(\fx) \geq \frac{M}{24}\sum_{t=0}^{T} \left\|\boldsymbol{s}_t^*\right\|^3,  
\end{equation*}
we have
{
\begin{align*}
&\sum_{t=0}^{T-1}(K^1_t+K^2_t) \\
& \leq 2T + \frac{4 \sqrt{3\kappa} T}{3} \left( \frac{3}{T} \log\left(\frac{\sqrt{3\kappa+1}}{\tilde{\epsilon}}R\right) \right)  +\frac{4 \sqrt{3\kappa} T}{3}\log \left(8(3\kappa+1)^{1.5}+\frac{192 (4\kappa)^3(3\kappa+1)^{1.5}}{T M \bar{\epsilon}^{ 3}}\Delta\right)\\
& =\mathcal{O}\left(\sqrt{ \bar{\ell}} \kappa^3 \epsilon^{-1.5} \log (\bar{\ell}^{1.5}\kappa^{-3}  \epsilon^{-4.5}) \right)  =\tilde{\mathcal{O}}\left(\sqrt{ \bar{\ell}} \kappa^3 \epsilon^{-1.5}\right).  
\end{align*}
}
The claim follows from the fact that we call gradient oracle for $\mathcal{O}\left(\sum_{t=0}^{T-1}(K^1_t+K^2_t) \right)$ times and perform
Hessian (inverse) and exact cubic sub-problem solver calls for $\mathcal{O}(T)$ times.

\end{proof}

\section{The Proof of Section~\ref{sec:LFSBA}}
\subsection{The Proof of Lemma~\ref{lem:pre-LFSBA}}
\begin{proof}
We first explain the stopping condition of the Algorithm~\ref{alg:LFSBA} with respect to $\epsilon$.
When $\boldsymbol{x}_{t+1}$ from Algorithm \ref{alg:LFSBA} is not an $(\epsilon, \sqrt{M \epsilon})$-SOSP of $\mathcal{L}_\lambda^*\left(\boldsymbol{x}_{t+1}\right)$,
we have $\left\|\nabla \mathcal{L}_\lambda^*\left(\boldsymbol{x}_{t+1}\right) \right\| \geq \epsilon$ or $
\xi\left(\boldsymbol{x}_{t+1}\right) \geq \sqrt{M \epsilon} $.

We consider the gradient case, the equation \eqref{eq:stationary_condition_1} in Lemma~\ref{lem:stationary conditions} and Lemma~\ref{lem:property_Hessain_smooth} means

\begin{equation} \label{eq:gradient_case}
\begin{aligned}
& \left\| \nabla \mathcal{L}_\lambda^*\left(\boldsymbol{x}_{t+1}\right)\right\| \\
& =\left\|\nabla \mathcal{L}_\lambda^*\left(\boldsymbol{x}_{t+1}\right)-\g(\fx_t;\fy_t,\fw_t)-\H(\fx_{\pi(t)};\fy_{\pi(t)},\fw_{\pi(t)})\boldsymbol{s}_t^*-\frac{M}{2}\left\|\boldsymbol{s}_t^*\right\| \boldsymbol{s}_t^*\right\| \\
& \leq\left\|\nabla \mathcal{L}_\lambda^*\left(\boldsymbol{x}_{t+1}\right)-\nabla \mathcal{L}_\lambda^*\left(\boldsymbol{x}_t\right)-\nabla^2 \mathcal{L}_\lambda^*\left(\boldsymbol{x}_t\right) \boldsymbol{s}_t^*\right\|+\left\|\nabla \mathcal{L}_\lambda^*\left(\boldsymbol{x}_t\right)-\g(\fx_t;\fy_t,\fw_t)\right\|+\frac{M}{2}\left\|\boldsymbol{s}_t^*\right\|^2 \\
&~~~~~+\left\|\nabla^2 \mathcal{L}_\lambda^*\left(\boldsymbol{x}_{\pi(t)}\right) \boldsymbol{s}_t^*-\H(\fx_{\pi(t)};\fy_{\pi(t)},\fw_{\pi(t)}) \boldsymbol{s}_t^*\right\| 
+\left\|\nabla^2 \mathcal{L}_\lambda^*\left(\boldsymbol{x}_t\right) \boldsymbol{s}_t^*-\nabla^2 \mathcal{L}_\lambda^*\left(\boldsymbol{x}_{\pi(t)}\right) \boldsymbol{s}_t^*\right\| \\
& \leq \frac{\bar{\rho}}{2}\left\|\boldsymbol{s}_t^*\right\|^2+\bar{C}_g \epsilon+\bar{C}_H \sqrt{M \epsilon}\left\|\boldsymbol{s}_t^*\right\|+\frac{M}{2}\left\|\boldsymbol{s}_t^*\right\|^2+\bar{\rho}\left\|\boldsymbol{s}_t^*\right\|\left\|\boldsymbol{x}_{\pi(t)}-\boldsymbol{x}_t\right\| \\
& =\frac{\bar{\rho} +M}{2}\left\|\boldsymbol{s}_t^*\right\|^2+\bar{C}_g \epsilon+\bar{C}_H \sqrt{M \epsilon}\left\|\boldsymbol{s}_t^*\right\|+\bar{\rho}\left\|\boldsymbol{s}_t^*\right\|\left\|\boldsymbol{x}_{\pi(t)}-\boldsymbol{x}_t\right\| \\
& \leq \frac{\bar{\rho} +M}{2}\left\|\boldsymbol{s}_t^*\right\|^2+\bar{C}_g \epsilon+\frac{\bar{C}_H\left(\epsilon+M\left\|\boldsymbol{s}_t^*\right\|^2\right)}{2}+ \bar{\rho} \left\|\boldsymbol{s}_t^*\right\|\left\|\boldsymbol{x}_{\pi(t)}-\boldsymbol{x}_t\right\| \\
& =\frac{\left(1+\bar{C}_H\right) M+\bar{\rho}}{2}\left\|\boldsymbol{s}_t^*\right\|^2+\left(\bar{C}_g+\frac{\bar{C}_H}{2}\right) \epsilon+ \bar{\rho}\left\|\boldsymbol{s}_t^*\right\|\left\|\boldsymbol{x}_{\pi(t)}-\boldsymbol{x}_t\right\|.   
\end{aligned}
\end{equation}

Then we consider the Hessian case, the equation \eqref{eq:stationary_condition_2} in Lemma~\ref{lem:stationary conditions} means:
\begin{equation} \label{eq:hessian_case}
\begin{aligned}
& \nabla^2 \mathcal{L}_\lambda^*\left(\boldsymbol{x}_{t+1}\right) \\
\succeq & \H(\fx_{\pi(t)};\fy_{\pi(t)},\fw_{\pi(t)})-\left\|\H(\fx_{\pi(t)};\fy_{\pi(t)},\fw_{\pi(t)})-\nabla^2 \mathcal{L}_\lambda^*\left(\boldsymbol{x}_{t+1}\right)\right\| \boldsymbol{I} \\
\succeq & -\frac{M}{2}\left\|\boldsymbol{s}_t^*\right\| \boldsymbol{I}-\left\|\H(\fx_{\pi(t)};\fy_{\pi(t)},\fw_{\pi(t)})-\nabla^2 \mathcal{L}_\lambda^*\left(\boldsymbol{x}_{t+1}\right)\right\| \boldsymbol{I} \\
\succeq & -\frac{M}{2}\left\|\boldsymbol{s}_t^*\right\| \boldsymbol{I}-\left\|\H(\fx_{\pi(t)};\fy_{\pi(t)},\fw_{\pi(t)})-\nabla^2 \mathcal{L}_\lambda^*\left(\boldsymbol{x}_{\pi(t)}\right)\right\| \boldsymbol{I}-\left\|\nabla^2 \mathcal{L}_\lambda^*\left(\boldsymbol{x}_{\pi(t)}\right)
-\nabla^2 \mathcal{L}_\lambda^*\left(\boldsymbol{x}_t\right)\right\| \boldsymbol{I} \\
&-\left\|\nabla^2 \mathcal{L}_\lambda^*\left(\boldsymbol{x}_t\right)-\nabla^2 \mathcal{L}_\lambda^*\left(\boldsymbol{x}_{t+1}\right)\right\| \boldsymbol{I}\\
\succeq & -\frac{M}{2}\left\|\boldsymbol{s}_t^*\right\| \boldsymbol{I}-\bar{C}_H \sqrt{M \epsilon} \boldsymbol{I} -\bar{\rho} \|\boldsymbol{x}_{\pi(t)}-\boldsymbol{x_t}\| \boldsymbol{I}
-\bar{\rho}\left\|\boldsymbol{s}_t^*\right\| \boldsymbol{I} \\
\succeq & -\frac{M+ 2\bar{\rho}}{2}\left\|\boldsymbol{s}_t^*\right\| \boldsymbol{I}-\bar{C}_H \sqrt{M \epsilon} \boldsymbol{I} -\bar{\rho} \|\boldsymbol{x}_{\pi(t)}-\boldsymbol{x}_t\| \boldsymbol{I}.    
\end{aligned}
\end{equation}

If $\boldsymbol{x}_{t+1}$ is not an $(\epsilon, \sqrt{M \epsilon})$-SOSP, then
\begin{itemize}
    \item if $\left\|\nabla \mathcal{L}_\lambda^*\left(\boldsymbol{x}_{t+1}\right) \right\| \geq \epsilon$, we have
    \begin{equation}
        \epsilon \leq \frac{1}{ \left( 1-\bar{C}_g -\frac{\bar{C}_H}{2} \right) } \left(\frac{\left(1+\bar{C}_H\right) M + \bar{\rho}}{2}\left\|\boldsymbol{s}_t^*\right\|^2 + \bar{\rho} \left\|\boldsymbol{s}_t^*\right\| \left\|\boldsymbol{x}_{\pi(t)}-\boldsymbol{x}_t\right\| \right).
    \end{equation}
    \item if $
\xi\left(\boldsymbol{x}_{t+1}\right) \geq \sqrt{M \epsilon} $, we have 
\begin{equation}
 \epsilon \leq \frac{1}{M}\left(\frac{1}{1-\bar{C}_H}\right)^2
\left(\frac{M+2\bar{\rho}}{2} \left\|\boldsymbol{s}_t^*\right\|+\bar{\rho} \|\boldsymbol{x}_{\pi(t)}-\boldsymbol{x}_t\|\right)^2.
\end{equation}
\end{itemize}

With $\bar{C}_g = 1/576$ and $\bar{C}_H= 1/288$, we can choose a upper bound as the stopping condition:
\begin{equation}
\label{eq:accuracy_super_bound}
\epsilon \leq  \frac{1}{M}(\frac{288}{287})^2
\left(\frac{M+2 \bar{\rho}}{\sqrt{2}} \left\|\boldsymbol{s}_t\right\|+\bar{\rho}\|\boldsymbol{x}_{\pi(t)}-\boldsymbol{x}_t\|\right)^2.
\end{equation}

That means if
\begin{equation*}
\epsilon \geq
\frac{1}{M}(\frac{288}{287})^2
\left(\frac{M+2 \bar{\rho}}{\sqrt{2}} \left\|\boldsymbol{s}_t\right\|+\bar{\rho}\|\boldsymbol{x}_{\pi(t)}-\boldsymbol{x}_{t}\|\right)^2. 
\end{equation*}
 
then $\boldsymbol{x}_{t+1}$ from Algorithm \ref{alg:LFSBA} is an $(\epsilon, \sqrt{M \epsilon})$-SOSP of $\mathcal{L}_\lambda^*\left(\boldsymbol{x}_{t+1}\right)$.

Next, we need to examine the difference $\mathcal{L}_\lambda^*\left(\boldsymbol{x}_{t}\right)- \mathcal{L}_\lambda^*\left(\boldsymbol{x}_{t+1}\right)$.

For the sake of analysis, we need to take a larger upper bound on $\epsilon$:
{
\begin{equation}
\label{eq:accuracy_further_result}
\begin{aligned}
 \epsilon \leq & \frac{1}{M}(\frac{288}{287})^2
\left(\frac{M+2 \bar{\rho}}{\sqrt{2}}
\left\|\boldsymbol{s}_t^*\right\|+\bar{\rho}\|\boldsymbol{x}_{\pi(t)}-\boldsymbol{x}_{t}\|\right)^2 \\
= & (\frac{288}{287})^2
\left(\frac{\left( M+2\bar{\rho}\right)^2}{2M}
\left\|\boldsymbol{s}_t^*\right\|^2+ \frac{\sqrt{2}\left(M+2\bar{\rho}\right)\bar{\rho}}{M}\left\|\boldsymbol{s}_t^*\right\|\|\boldsymbol{x}_{\pi(t)}-\boldsymbol{x}_{t}\|+
\frac{\bar{\rho}^2}{M} \|\boldsymbol{x}_{\pi(t)}-\boldsymbol{x}_{t}\|^2\right) \\
= & (\frac{288}{287})^2
\left(\frac{M^2+4M\bar{\rho}+4\bar{\rho}^2}{2M}
\left\|\boldsymbol{s}_t^*\right\|^2+\frac{\sqrt{2}\left(M+2\bar{\rho}\right)\bar{\rho}}{M}\left\|\boldsymbol{s}_t^*\right\|\|\boldsymbol{x}_{\pi(t)}-\boldsymbol{x}_{t}\|+\frac{\bar{\rho}^2}{M} \|\boldsymbol{x}_{\pi(t)}-\boldsymbol{x}_{t}\|^2\right)\\
\leq & (\frac{288}{287})^2
\left(\left(\frac{1}{2} M+4\bar{\rho} \right)
\left\|\boldsymbol{s}_t^*\right\|^2 +3\sqrt{2}\bar{\rho} \left\|\boldsymbol{s}_t^*\right\|\|\boldsymbol{x}_{\pi(t)}-\boldsymbol{x}_{t}\|
+\frac{\bar{\rho}^2}{M} \|\boldsymbol{x}_{\pi(t)}-\boldsymbol{x}_{t}\|^2\right).
\end{aligned}
\end{equation}
}
Then, according to inequality \eqref{eq:accuracy_super_bound} and $M \geq \bar{\rho}$, we will have

\begin{equation}
\label{eq:descent_L_first}
\begin{aligned}
& \mathcal{L}_\lambda^*\left(\boldsymbol{x}_{t+1}\right)-
\mathcal{L}_\lambda^*\left(\boldsymbol{x}_t\right) \\
\leq & \nabla \mathcal{L}_\lambda^*\left(\boldsymbol{x}_t\right)^{\top} \boldsymbol{s}_t^*+\frac{1}{2}\left(\boldsymbol{s}_t^*\right)^{\top} \nabla^2 \mathcal{L}_\lambda^*(\boldsymbol{x}_t) \boldsymbol{s}_t^*+\frac{\bar{\rho}}{6}\left\|\boldsymbol{s}_t^*\right\|^3 \\
= & \g(\fx_t;\fy_t,\fw_t)^{\top} \boldsymbol{s}_t^*+\left(\nabla \mathcal{L}_\lambda^*\left(\boldsymbol{x}_t\right)-\g(\fx_t;\fy_t,\fw_t)\right)^{\top} \boldsymbol{s}_t^*
+\frac{1}{2}\left(\boldsymbol{s}_t^*\right)^{\top} \nabla^2 \mathcal{L}_\lambda^*(\boldsymbol{x}_t) \boldsymbol{s}_t^*+\frac{\bar{\rho}}{6}\left\|\boldsymbol{s}_t^*\right\|^3\\
\leq &- \left(\boldsymbol{s}_t^*\right)^{\top} \H(\fx_{\pi(t)};\fy_{\pi(t)},\fw_{\pi(t)})\boldsymbol{s}_t^* -\frac{M }{2} \|\boldsymbol{s}_t^*\|^3  +\bar{C}_g\epsilon\left\|\boldsymbol{s}_t^*\right\|
 +\frac{1}{2}\left(\boldsymbol{s}_t^*\right)^{\top} \nabla^2 \mathcal{L}_\lambda^*(\boldsymbol{x}_t) \boldsymbol{s}_t^*+\frac{\bar{\rho}}{6}\left\|\boldsymbol{s}_t^*\right\|^3\\
 \leq &  -\frac{1}{2} \left(\boldsymbol{s}_t^*\right)^{\top} \H(\fx_{\pi(t)};\fy_{\pi(t)},\fw_{\pi(t)})\boldsymbol{s}_t^*
 -\frac{M }{4} \|\boldsymbol{s}_t^*\|^3 
+\bar{C}_g\epsilon\left\|\boldsymbol{s}_t^*\right\|
 +\frac{1}{2}\left(\boldsymbol{s}_t^*\right)^{\top} \nabla^2 \mathcal{L}_\lambda^*(\boldsymbol{x}_t) \boldsymbol{s}_t^*+\frac{\bar{\rho}}{6}\left\|\boldsymbol{s}_t^*\right\|^3 \\
\leq & -\frac{M }{4} \|\boldsymbol{s}_t^*\|^3 +\bar{C}_g\epsilon\left\|\boldsymbol{s}_t^*\right\|
+\frac{\bar{\rho}}{6}\left\|\boldsymbol{s}_t^*\right\|^3\\
&+\frac{1}{2}\left(\boldsymbol{s}_t^*\right)^{\top} \left(\nabla^2 \mathcal{L}_\lambda^*(\boldsymbol{x}_t) -\nabla^2 \mathcal{L}_\lambda^*(\boldsymbol{x}_{\pi(t)}) 
+ \nabla^2 \mathcal{L}_\lambda^*(\boldsymbol{x}_{\pi(t)}) -\H(\fx_{\pi(t)};\fy_{\pi(t)},\fw_{\pi(t)})\right) \boldsymbol{s}_t^* \\
 \leq & -\frac{M}{4}\left\|\boldsymbol{s}_t^*\right\|^3
 +\bar{C}_g\epsilon\left\|\boldsymbol{s}_t^*\right\| + \frac{\bar{\rho}}{2}\left\|\boldsymbol{s}_t^*\right\|^2 \left\|\boldsymbol{x}_{\pi(t)}-\boldsymbol{x}_t\right\|
 +\frac{\bar{C}_H\sqrt{M \epsilon}}{2}\left\|\boldsymbol{s}_t^*\right\|^2+\frac{\bar{\rho}}{6}\left\|\boldsymbol{s}_t^*\right\|^3 \\
 \leq & -\frac{M}{4}\left\|\boldsymbol{s}_t^*\right\|^3
   + \frac{\bar{\rho}}{2}\left\|\boldsymbol{s}_t^*\right\|^2 \left\|\boldsymbol{x}_{\pi(t)}-\boldsymbol{x}_t\right\|+ \left(\bar{C}_g+\frac{\bar{C}_H}{4}\right)\epsilon\left\|\boldsymbol{s}_t^*\right\|+
\frac{6\bar{C}_H M + 4\bar{\rho}}{24}\left\|\boldsymbol{s}_t^*\right\|^3,
\end{aligned}
\end{equation}
where the first inequality comes from the equation \eqref{eq:Hessian_smmoth_2} of Lemma \ref{lem:property_Hessain_smooth}; the second inequality comes from the  equation \eqref{eq:stationary_condition_1} of Lemma \ref{lem:stationary conditions}.

We need to address the cross terms in the preceding expression to derive a larger upper bound for 
$\mathcal{L}_\lambda^*\left(\boldsymbol{x}_{t+1}\right)-
\mathcal{L}_\lambda^*\left(\boldsymbol{x}_t\right)$.

By Young’s inequality, we can obtain
{
\begin{equation}
  \label{eq:descent_square_1}
  \frac{\bar{\rho}}{2}  \left\|\boldsymbol{s}_t^*\right\|^2\|\boldsymbol{x}_{\pi(t)}-\boldsymbol{x_t}\|=\left(\frac{M^\frac{2}{3}}{2 \cdot 32^\frac{1}{3}}  \left\|\boldsymbol{s}_t^*\right\|^2\right) \cdot\left(\frac{32^\frac{1}{3} \bar{\rho}}{M^\frac{2}{3}}\|\boldsymbol{x}_{\pi(t)}-\boldsymbol{x}_t\|\right) \leq \frac{M}{24}  \left\|\boldsymbol{s}_t^*\right\|^3+\frac{32 \bar{\rho}^3}{3 M^2}\|\boldsymbol{x}_{\pi(t)}-\boldsymbol{x}_t\|^3.  
\end{equation}
}

Then according to equation \eqref{eq:accuracy_further_result} and 
$M \geq \bar{\rho}$, we have
{
\begin{equation}
\label{eq:es_first}
    \epsilon\left\|\boldsymbol{s}_t^*\right\| 
 \leq (\frac{288}{287})^2
\left(\left(\frac{1}{2} M+4\bar{\rho} \right)
\left\|\boldsymbol{s}_t^*\right\|^3 +3\sqrt{2}\bar{\rho} \left\|\boldsymbol{s}_t^*\right\|^2\|\boldsymbol{x}_{\pi(t)}-\boldsymbol{x}_{t}\|
+\frac{\bar{\rho}^2}{M}\left\|\boldsymbol{s}_t^*\right\| \|\boldsymbol{x}_{\pi(t)}-\boldsymbol{x}_{t}\|^2\right).
\end{equation}
}

Also by Young’s inequality, we have 
\begin{equation}
  \label{eq:descent_square_2}
  \bar{\rho} \left\|\boldsymbol{s}_t^*\right\|^2\|\boldsymbol{x}_{\pi(t)}-\boldsymbol{x_t}\|=\left(\frac{M^{\frac{2}{3}}}{24^{\frac{2}{3}}}  \left\|\boldsymbol{s}_t^*\right\|^2\right) \left(\frac{24^{\frac{2}{3}} \bar{\rho}}{M^{\frac{2}{3}}}\|\boldsymbol{x}_{\pi(t)}-\boldsymbol{x}_t\|\right) \leq \frac{M}{36}  \left\|\boldsymbol{s}_t^*\right\|^3+\frac{576 \bar{\rho}^3}{3 M^2}\|\boldsymbol{x}_{\pi(t)}-\boldsymbol{x}_t\|^3,  
\end{equation}
{
\begin{equation}
\label{eq:lazy_square}
\frac{\bar{\rho}^2}{M}\left\|\boldsymbol{s}_t^*\right\| \|\boldsymbol{x}_{\pi(t)}-\boldsymbol{x}_t\|^2
= \left( \frac{ M^{\frac{1}{3}} }{36^{\frac{1}{3}}}\left\|\boldsymbol{s}_t^*\right\| \right) \left(\frac{36^{\frac{1}{3}}\bar{\rho}^2}{M^\frac{4}{3}}\|\boldsymbol{x}_{\pi(t)}-\boldsymbol{x}_t\|^2 \right) \leq \frac{M}{108}\left\|\boldsymbol{s}_t^*\right\|^3+ \frac{4\bar{\rho}^3}{M^2} \|\boldsymbol{x}_{\pi(t)}-\boldsymbol{x}_t\|^3.   
\end{equation}
}

By connecting inequalities \eqref{eq:descent_square_2} and \eqref{eq:lazy_square} to inequality \eqref{eq:es_first}, we get
\begin{equation} \label{eq:es_result}
\begin{aligned}
\epsilon\left\|\boldsymbol{s}_t^*\right\| 
&
\leq (\frac{288}{287})^2 \left( \left( (\frac{1}{2}  + \frac{\sqrt{2}}{12}+\frac{1}{108})M+4\bar{\rho}\right)\left\|\boldsymbol{s}_t^*\right\|^3 +\frac{\left(576\sqrt{2}+4\right)\bar{\rho}^3}{M^2}\|\boldsymbol{x}_{\pi(t)}-\boldsymbol{x}_{t}\|^3 \right)\\
& \leq (\frac{16}{25} M+ \frac{21}{5}\bar{\rho}) \left\|\boldsymbol{s}_t^*\right\|^3 +(\frac{288}{287})^2 \frac{\left(576\sqrt{2}+4\right)\bar{\rho}^3}{M^2}\|\boldsymbol{x}_{\pi(t)}-\boldsymbol{x}_{t}\|^3.
\end{aligned}
\end{equation}

By connecting inequalities \eqref{eq:descent_square_1} and \eqref{eq:es_result} to inequality \eqref{eq:descent_L_first}, we obtain

\begin{equation} \label{eq:descent_L_final}
\begin{aligned}
& \mathcal{L}_\lambda^*\left(\boldsymbol{x}_{t+1}\right)-
\mathcal{L}_\lambda^*\left(\boldsymbol{x}_t\right) \\
\overset{\eqref{eq:descent_square_1}}{\leq} & -\frac{M}{4}\left\|\boldsymbol{s}_t^*\right\|^3
+\frac{M}{24}\left\|\boldsymbol{s}_t^*\right\|^3
+\frac{32\bar{\rho}^3}{3M^2}\left\|\boldsymbol{x}_{\pi(t)}-\boldsymbol{x}_t\right\|^3
+\frac{6\bar{C}_H M + 4\bar{\rho}}{24}\left\|\boldsymbol{s}_t^*\right\|^3\\
+ &  \left(\bar{C}_g+\frac{\bar{C}_H}{4}\right)  \left( (\frac{16}{25} M+ \frac{21}{5}\bar{\rho})\left\|\boldsymbol{s}_t^*\right\|^3 +(\frac{288}{287})^2\frac{\left(576\sqrt{2}+4\right)\bar{\rho}^3}{M^2}\|\boldsymbol{x}_{\pi(t)}-\boldsymbol{x}_t\|^3 \right)\\
\leq & \frac{-\frac{37}{5}M + \frac{32}{5}\bar{\rho}}{36}\left\|\boldsymbol{s}_t^*\right\|^3 +\frac{322\bar{\rho}^3}{25M^2}\|\boldsymbol{x}_{\pi(t)}-\boldsymbol{x}_t\|^3\\
\leq & -\frac{M}{36}\left\|\boldsymbol{s}_t^*\right\|^3 +\frac{322\bar{\rho}^3}{25M^2}\|\boldsymbol{x}_{\pi(t)}-\boldsymbol{x}_t\|^3.   
\end{aligned}
\end{equation}

According to inequality \eqref{eq:gradient_case} and \eqref{eq:accuracy_further_result} and $\bar{\rho} \leq M$, we can get

\begin{align*}
\left\|\nabla \mathcal{L}_\lambda^*\left(\boldsymbol{x}_{t+1}\right) \right\| 
& \leq  \frac{\left(1+\bar{C}_H\right) M + \bar{\rho}}{2}\left\|\boldsymbol{s}_t^*\right\|^2+\left(\bar{C}_g+\frac{\bar{C}_H}{2}\right) 
\epsilon + \bar{\rho} \left\|\boldsymbol{s}_t^*\right\| \left\|\boldsymbol{x}_{\pi(t)}-\boldsymbol{x}_{t}\right\| \\
& \leq  \frac{\left(1+\bar{C}_H\right) M + \bar{\rho}}{2}\left\|\boldsymbol{s}_t^*\right\|^2 + \bar{\rho} \left\|\boldsymbol{s}_t^*\right\| \left\|\boldsymbol{x}_{\pi(t)}-\boldsymbol{x}_{t}\right\| \\
&+ \frac{288}{287^2}
\left(\left(\frac{1}{2} M+4\bar{\rho} \right)
\left\|\boldsymbol{s}_t^*\right\|^2 +3\sqrt{2}\bar{\rho}\left\|\boldsymbol{s}_t^*\right\|\|\boldsymbol{x}_{\pi(t)}-\boldsymbol{x}_{t}\|
+\frac{\bar{\rho}^2}{M} \|\boldsymbol{x}_{\pi(t)}-\boldsymbol{x}_{t}\|^2\right)\\
& \leq \frac{25 \left(M+\bar{\rho}\right)}{48} \left\|\boldsymbol{s}_t^*\right\|^2
+\frac{49}{48}\bar{\rho} \left\|\boldsymbol{s}_t^*\right\| \left\|\boldsymbol{x}_{\pi(t)}-\boldsymbol{x}_{t}\right\| 
+\frac{1}{286} \frac{\bar{\rho}^2}{M} \|\boldsymbol{x}_{\pi(t)}-\boldsymbol{x}_{t}\|^2\\
& \leq \frac{25M}{24} \left\|\boldsymbol{s}_t^*\right\|^2
+\frac{49}{48}\bar{\rho} \left\|\boldsymbol{s}_t^*\right\| \left\|\boldsymbol{x}_{\pi(t)}-\boldsymbol{x}_{t}\right\| 
+\frac{1}{286} \frac{\bar{\rho}^2}{M} \|\boldsymbol{x}_{\pi(t)}-\boldsymbol{x}_{t}\|^2. 
\end{align*}

Indeed, using the convexity of the function $t \mapsto t^{3/2}$ for $t \geq 0$, that means $(a + b + c)^{\frac{3}{2}} \leq 3^{\frac{1}{2}} (a^{\frac{3}{2}} + b^{\frac{3}{2}} + c^{\frac{3}{2}})$, we obtain
\begin{align*}
\left\|\nabla \mathcal{L}_\lambda^*\left(\boldsymbol{x}_{t+1}\right)  \right\|^{3 / 2}  &\leq  \left(\frac{25M}{24} \left\|\boldsymbol{s}_t^*\right\|^2
+\frac{49}{48}\bar{\rho} \left\|\boldsymbol{s}_t^*\right\| \left\|\boldsymbol{x}_{\pi(t)}-\boldsymbol{x}_{t}\right\| 
+\frac{1}{286} \frac{\bar{\rho}^2}{M} \|\boldsymbol{x}_{\pi(t)}-\boldsymbol{x}_{t}\|^2 \right)^{\frac{3}{2}} \\
&\leq \sqrt{3} \left(\frac{25M}{24} \right)^{\frac{3}{2}}\left\|\boldsymbol{s}_t^*\right\|^3 +\sqrt{3} \left(\frac{49}{48}\right)^{\frac{3}{2} }\left(\bar{\rho} \left\|\boldsymbol{s}_t^*\right\| \left\|\boldsymbol{x}_{\pi(t)}-\boldsymbol{x}_{t}\right\| \right)^{\frac{3}{2}} \\
&+\sqrt{3} \frac{1}{286^{\frac{3}{2}}} \frac{\bar{\rho}^3}{M^{\frac{3}{2}}}\|\boldsymbol{x}_{\pi(t)}-\boldsymbol{x}_{t}\|^3 \\
&\leq \frac{ 39\sqrt{3} }{24}  M^{\frac{3}{2}}
\left\|\boldsymbol{s}_t^*\right\|^3 + \frac{13 \sqrt{3}}{24} \frac{\bar{\rho}^3}{M^{\frac{3}{2}}}\|\boldsymbol{x}_{\pi(t)}-\boldsymbol{x}_{t}\|^3,   
\end{align*}
where the bound 
$(\bar{\rho} \left\|\boldsymbol{s}_t^*\right\| \|\boldsymbol{x}_{\pi(t)}-\boldsymbol{x}_{t}\|)^{3 / 2}
\leq \frac{ M^{3 / 2}}{2} \left\|\boldsymbol{s}_t^*\right\|^3 +\frac{\bar{\rho}^3}{2 M^{3 / 2}}\|\boldsymbol{x}_{\pi(t)}-\boldsymbol{x}_{t}\|^3$
is used to establish the third inequality.

Also, according to inequality \eqref{eq:hessian_case} 
and \eqref{eq:accuracy_super_bound} and $\bar{\rho} \leq M$, we can get

\begin{align*}
\xi\left(\mathbf{x}_{t+1}\right)
 & \leq  \frac{M+2\bar{\rho}}{2} \left\|\boldsymbol{s}_t^*\right\|  +\bar{\rho}\|\boldsymbol{x}_{\pi(t)}-\boldsymbol{x}_{t}\|+\bar{C}_H\sqrt{M \epsilon}\\
 &  \leq \left(\frac{\bar{C}_H }{ \sqrt{2} \left(1-\bar{C}_H\right)} + \frac{1}{2} \right)\left(M+2\bar{\rho}\right)\left\|\boldsymbol{s}_t^*\right\| + \frac{288}{287}\bar{\rho}\|\boldsymbol{x}_{\pi(t)}-\boldsymbol{x}_{t}\|\\
 & \leq \left(\frac{1}{287 \sqrt{2} } + \frac{1}{2} \right)\left(3M\right)\left\|\boldsymbol{s}_t^*\right\| + \frac{288}{287}\bar{\rho}\|\boldsymbol{x}_{\pi(t)}-\boldsymbol{x}_{t}\|.
\end{align*}

Then, using convexity of the function $t \mapsto t^3 $ for
$t \geq 0$, that means $(a + b)^3 \leq 4(a^3 + b^3)$, we get
\begin{align*}
 \xi\left(\boldsymbol{x}_{t+1}\right)^3 
 & \leq  \left(\left(\frac{1 }{ 287 \sqrt{2} } + \frac{1}{2} \right)\left(3M \right)\left\|\boldsymbol{s}_t^*\right\|  + \frac{288}{287} \bar{\rho} \|\boldsymbol{x}_{\pi(t)}-\boldsymbol{x}_{t}\| \right)^3\\
  &  \leq 108 \left(\frac{1}{287\sqrt{2}} + \frac{1}{2} \right)^3 M^3 \left\|\boldsymbol{s}_t^*\right\|^3 +4 (\frac{288}{287} )^3 \bar{\rho}^3 \|\boldsymbol{x}_{\pi(t)}-\boldsymbol{x}_{t}\|^3.   
\end{align*}

Hence, rearranging the above equation, we obtain
\begin{align*}
\frac{1}{120\sqrt{3M}} \left\|\nabla \mathcal{L}_\lambda^*\left(\boldsymbol{x}_{t+1}\right)\right\|^{3 / 2}  
&\leq \frac{M}{72}\left\|\boldsymbol{s}_t^*\right\|^3 +
\frac{\bar{\rho}^3}{216 M^2} \|\boldsymbol{x}_{\pi(t)}-\boldsymbol{x}_{t}\|^3, \\
\frac{1}{987M^2} \xi\left(\boldsymbol{x}_{t+1}\right)^3 
&\leq \frac{M}{72} \left\|\boldsymbol{s}_t^*\right\|^3 +
\frac{\bar{\rho}^3}{144M^2} \|\boldsymbol{x}_{\pi(t)}-\boldsymbol{x}_{t}\|^3.
\end{align*}

Finally, connecting with the inequality \eqref{eq:descent_L_final}, we can obtain 
\begin{equation*}
\mathcal{L}_\lambda^*\left(\boldsymbol{x}_{t}\right)
-\mathcal{L}_\lambda^*\left(\boldsymbol{x}_{t+1}\right)
\geq \gamma(\boldsymbol{x}_{t+1}) +\frac{M}{72} \left\|\boldsymbol{s}_t^*\right\|^3-\frac{13 \bar{\rho}^3}{M^2}\|\boldsymbol{x}_{\pi(t)}-\boldsymbol{x}_t\|^3.   
\end{equation*}
\end{proof}

\subsection{The Proof of Theorem~\ref{thm:Lazy_Hessian_calls}}
\begin{proof}
Without loss of generality, we assume $T$ is a multiple of $m$, such that $m: T = mh$, therefore we can divide the method into $h$ stages, with the 
$i$-th stage ($1 \leq i \leq h$). And by the definition of $T$, we have 
\begin{equation*}
\left\|\nabla \mathcal{L}_\lambda^*\left(\boldsymbol{x}_{t}\right) \right\| \geq \epsilon \quad \text{ or }   \quad \xi\left(\boldsymbol{x}_{t}\right) \geq \sqrt{M \epsilon} \quad \text { for } \quad t=0, \ldots, T-1.     
\end{equation*}

Consequently, by Lemma~\ref{lem:pre-LFSBA} we have
\begin{equation*}
\mathcal{L}_\lambda^*(\boldsymbol{x}_t)-\mathcal{L}_\lambda^*(\boldsymbol{x}_{t+1}) \geq \gamma(\boldsymbol{x}_{t+1})+ \frac{M}{720} \left\|\boldsymbol{s}_t^*\right\|^3+
 \frac{9M}{720} \left\|\boldsymbol{s}_t^*\right\|^3-\frac{13 \bar{\rho}^3}{M^2}\|\boldsymbol{x}_{\pi(t)}-\boldsymbol{x}_t\|^3,  \text { for }  t=0, \ldots, T-1. 
\end{equation*}

We first consider $1$-th phase of the method, we have
\begin{equation*}
\mathcal{L}_\lambda^*(\boldsymbol{x}_t)-\mathcal{L}_\lambda^*(\boldsymbol{x}_{t+1}) \geq \gamma(\boldsymbol{x}_{t+1})+ \frac{M}{720} \left\|\boldsymbol{s}_t^*\right\|^3+
 \frac{9M}{720} \left\|\boldsymbol{s}_t^*\right\|^3-\frac{13 \bar{\rho}^3}{M^2}\|\boldsymbol{x}_{0}-\boldsymbol{x}_t\|^3 ,\text { for } t=0, \ldots, m-1.    
\end{equation*}

Telescoping this bound for different $t$, and using triangle inequality for the last negative term,
\begin{equation*}
\left\|\boldsymbol{x}_0-\boldsymbol{x}_t\right\| \leq \sum_{i=0}^{t-1} \left\|\boldsymbol{x}_{i+1}-\boldsymbol{x}_i\right\|.    
\end{equation*}

Then, we have
\begin{equation}
\label{eq:1_y_2_T}
\mathcal{L}_\lambda^*(\boldsymbol{x}_0)-\mathcal{L}_\lambda^*(\boldsymbol{x}_m) 
 \geq \sum_{t=0}^{m-1} \gamma(\boldsymbol{x}_{t+1}) +
 \frac{M}{720} \sum_{t=0}^{m-1} \left\|\boldsymbol{s}_t^*\right\|^3
 + \frac{9M}{720} \sum_{t=1}^{m} r_{t}^3-\frac{13\bar{\rho}^3}{M^2} \sum_{t=1}^{m}\left(\sum_{i=1}^t r_i\right)^3.  
\end{equation}

By using Lemma \ref{lem:sum_inequality} with $r_{t+1}:=\left\|\boldsymbol{x}_{t+1}-\boldsymbol{x}_t\right\|$ for each $0 \leq t \leq m-1$, and let $M \geq 8(m+1)\bar{\rho}$, we have
\begin{equation*}
\mathcal{L}_\lambda^*\left(\boldsymbol{x}_0\right)-\mathcal{L}_\lambda^*\left(\boldsymbol{x}_m\right)  \geq \sum_{t=0}^{m-1} \gamma(\boldsymbol{x}_{t+1}).    
\end{equation*}

Using the same analytical method as the first phase and the definition of $\gamma(\boldsymbol{x})$, for the $i$-th $(1 \leq i \leq t)$ phase of the method with $M = \Omega(m\bar{\rho})$, we have
\begin{equation*}
\mathcal{L}_\lambda^*\left(\boldsymbol{x}_{m(i-1)}\right)-\mathcal{L}_\lambda^*\left(\boldsymbol{x}_{mi}\right) \geq \frac{1}{987M^2} \sum_{t=0}^{m-1}\xi\left(\boldsymbol{x}_{t+1}\right)^3  \geq \frac{m}{987\sqrt{ M}} \epsilon^{3 / 2},   
\end{equation*}
or
\begin{equation*}
\mathcal{L}_\lambda^*\left(\boldsymbol{x}_{m(i-1)}\right)-\mathcal{L}_\lambda^*\left(\boldsymbol{x}_{mi}\right) \geq  \frac{1}{120\sqrt{3M}} \sum_{t=0}^{m-1}\left\|\nabla \mathcal{L}_\lambda^*\left(\boldsymbol{x}_{t+1}\right)\right\|^{3 / 2}  \geq \frac{m}{120\sqrt{ 3M}} \epsilon^{3 / 2}.    
\end{equation*}

Telescoping this bound for all phases, we obtain
\begin{equation*}
\mathcal{L}_\lambda^*\left(\boldsymbol{x}_0\right) -\min\fL_{\lambda}^*(\fx) \geq \mathcal{L}_\lambda^*\left(\boldsymbol{x}_0\right) -\mathcal{L}_\lambda^*\left(\boldsymbol{x}_T\right)  \geq \frac{T}{120  \sqrt{3M} }\epsilon^{3/2}. 
\end{equation*}

That means the output $\hat{\boldsymbol{x}}$ of Algorithm \ref{alg:LFSBA} is an $(\epsilon, \OM(\sqrt{M\epsilon})$-SOSP of $\mathcal{L}_\lambda^*(\boldsymbol{x})$. And using the same analysis in Theorem~\ref{thm:FSBA}, we can prove the output $\hat{\boldsymbol{x}}$ of Algorithm \ref{alg:LFSBA} is also an $(\OM(\epsilon), \OM(\kappa^{2.5}\bar{\ell}^{0.5}m^{0.5}\epsilon^{0.5}))$-SOSP of $\varphi(\boldsymbol{x})$ with $T = \Theta\left((\varphi(\fx_0)-\varphi^*)\sqrt{M}\epsilon^{-3/2}\right)$.
It is worth noting that due to the Hessian being updated only every $m$ iterations, the second-order oracle complexities can be bounded by ${\OM}(1+\kappa^{2.5}\bar{\ell}^{0.5}m^{-0.5}\epsilon^{-1.5})$.

 And from inequality \eqref{eq:1_y_2_T}, we can also have
\begin{equation}
\label{eq:upperbound_lazy_st}
\mathcal{L}_\lambda^*\left(\boldsymbol{x}_0\right) -\min\fL_{\lambda}^*(\fx) \geq
\mathcal{L}_\lambda^*\left(\boldsymbol{x}_0\right)-\mathcal{L}_\lambda^*\left(\boldsymbol{x}_T\right)  \geq \frac{M}{720}\sum_{t=0}^{T-1} \left\|\boldsymbol{s}_t^*\right\|^3.
\end{equation}

Connecting the upper bound of $\sum_{t=0}^{T-1}(K^1_t+K^2_t)$  and the upper bound of $\sum_{t=0}^{T-1} \left\|\boldsymbol{s}_t^*\right\|^3$ in equation~\eqref{eq:upperbound_lazy_st}, we have
{
\begin{align*}
&\sum_{t=0}^{T-1}(K^1_t+K^2_t)\\
& \leq 2T + \frac{4 \sqrt{3\kappa} T}{3} \left( \frac{3}{T} \log\left(\frac{\sqrt{3\kappa+1}}{\tilde{\epsilon}}R\right) \right) +\frac{4 \sqrt{3\kappa} T}{3}\log \left(8(3\kappa+1)^{1.5}+\frac{5760 (4\kappa)^3(3\kappa+1)^{1.5}}{T M \tilde{\epsilon}^{ 3}}\Delta\right)\\
& =\mathcal{O}\left(\sqrt{m \bar{\ell}} \kappa^3 \epsilon^{-1.5} \log (\bar{\ell}^{1.5}\kappa^{-3} m^{-1.5} \epsilon^{-4.5}) \right)  =\tilde{\mathcal{O}}\left(\sqrt{m \bar{\ell}} \kappa^3 \epsilon^{-1.5}\right).
\end{align*}
}

The claim follows from the fact that we call gradient oracle for $\mathcal{O}\left(\sum_{t=0}^{T-1}(K^1_t+K^2_t)\right)$ times and perform
Hessian (inverse) and exact cubic sub-problem solver calls for $\mathcal{O}(T)$ times.
\end{proof}

\subsection{The Proof of Theorem~\ref{thm:LMCN}}
\begin{proof}
In the following proof, note that in minimax problems, the Hessian Lipschitz constant of $\varphi(\fx)$ is $\bar{\rho}=4\sqrt{2} \kappa^{3}\rho$.
LMCN and LFSBA follow the same approach in the proof of second-order complexity. Therefore, we only provide a brief explanation and present the necessary formulas.

We state the following facts without proof. Similar to Theorem~\ref{thm:Lazy_Hessian_calls}, we have the following results:

Under Assumption~\ref{assum:minimax}, let $M\geq \bar{\rho}$ and $\bar{\rho}=4\sqrt{2} \kappa^{3} \rho$ and suppose the following condition
\begin{align}
     \!\!\!  \!\! \!\left\|\nabla \varphi^*\left(\boldsymbol{x}_t\right)\!-\!\g(\fx_t;\fy_t)\right\| \!\leq\! \bar{C}_g \epsilon~~\text{and}~~\left\|\nabla^2 \varphi^*(\boldsymbol{x}_{\pi(t)})\!-\!\H(\fx_{\pi(t)};\fy_{\pi(t)})\right\| \!\leq\! \bar{C}_H \sqrt{M \epsilon}
\end{align}
hold with $\bar{C}_g:=1/576, \bar{C}_H:=1/288$ in Algorithm~\ref{alg:LCMN},
then it holds that
\begin{align}
\varphi^*\left(\boldsymbol{x}_{t}\right)
-\varphi^*\left(\boldsymbol{x}_{t+1}\right)
 \geq \gamma(\boldsymbol{x}_{t+1}) +\frac{M}{72} \left\|\fx_{t+1}-\fx_t\right\|^3-\frac{13 \bar{\rho}^3}{M^2}\|\boldsymbol{x}_{\pi(t)}-\boldsymbol{x}_t\|^3,
\end{align}
where we denote $\gamma(\boldsymbol{x}) := \max \left\{\frac{1}{987M^2} \xi\left(\boldsymbol{x}\right)^3 , \frac{1}{120\sqrt{3M}} \left\|\nabla \varphi^*\left(\boldsymbol{x}\right)\right\|^{3 / 2} \right\}$. The above result is the version of Lemma~\ref{lem:pre-LFSBA} for minimax problems and it can be proved using the same arguments.

Under the Assumption \ref{assum:minimax}, let $ \Delta:=\varphi\left(\boldsymbol{x}_0\right)-\varphi^{*}$, 
$\tilde{\epsilon}=\min \left\{\bar{C}_g \epsilon / \ell, \bar{C}_H \sqrt{M \epsilon} / \rho\right\}$, $\bar{C}_g=1/576$ and $\bar{C}_H=1/288$, if we run  Algorithm \ref{alg:LCMN} with
$M = \Omega(m\bar{\rho})$, 
$T = \Theta\left(\Delta\sqrt{M}\epsilon^{-3/2}\right)$,
$\kappa_1 = \kappa$, $\ell_1 = \ell$,
\begin{align*}
 K_t =\begin{cases}
    & \left\lceil 2 \sqrt{\kappa} \log \left(\frac{\sqrt{\kappa+1}}{\tilde{\epsilon}}\left\|\fy^*\left(\boldsymbol{x}_0\right)\right\|\right)\right\rceil~~~~~~~~~~~~~~~~~~~t=0\\
     & \left\lceil 2 \sqrt{\kappa} \log \left(\frac{\sqrt{\kappa+1}}{\tilde{\epsilon}}\left(\tilde{\epsilon}+\kappa\left\|\fx_t-\fx_{t-1}\right\|\right)\right)\right\rceil~~~t\geq 1
 \end{cases},
\end{align*}
and $\bar{\rho}=4\sqrt{2}\kappa^{3}\rho$, then the output $\hat{\boldsymbol{x}}$ of Algorithm \ref{alg:LCMN} is $\left(\epsilon, \kappa^{1.5} \sqrt{m \rho \epsilon}\right)$-SOSP of $\varphi(\boldsymbol{x})$. The first-order and second-order oracle complexities can be bounded by $\tilde{\OM}\left(\kappa^2 \sqrt{m\rho} \epsilon^{-1.5}\right)$ and ${\OM}\left(\sqrt{\rho/m} \kappa^{1.5} \epsilon^{-1.5}\right)$, respectively.
The above result is a simplified version of Theorem \ref{thm:Lazy_Hessian_calls} for minimax problems. 
It can be proved using the same arguments as in Theorem \ref{thm:Lazy_Hessian_calls}, with the only difference being the substitution of the parameter $\bar{\rho}$ and minor modifications in the complexity analysis of the AGD subroutine. The overall proof strategy and technical details remain identical.

The following inequality appears in the proof of the preceding result and serves as an essential intermediate step, similar to Equation~\eqref{eq:upperbound_lazy_st} in Theorem~\ref{thm:Lazy_Hessian_calls}. It will be used in the subsequent analysis:
\begin{equation}
\label{eq:upperbound_lazy_LMCN}
\varphi\left(\boldsymbol{x}_0\right) -\varphi^{*} \geq
\varphi\left(\boldsymbol{x}_0\right)-\varphi\left(\boldsymbol{x}_T\right)  \geq \frac{M}{720}\sum_{t=0}^{T-1} \left\|\boldsymbol{s}_t^*\right\|^3.
\end{equation}

As the AGD part involves different settings, the previous proof does not directly apply; we therefore provide a new analysis of its complexity. We need to explain that the following conditions:
\begin{align}
    \left\|\nabla \varphi\left(\boldsymbol{x}_t\right)-\g(\fx_t;\fy_t)\right\| \leq\! \bar{C}_g \epsilon~~~\text{and}~~~\left\|\nabla^2 \varphi(\boldsymbol{x}_t)-\H(\fx_t;\fy_t)\right\| \leq \bar{C}_H \sqrt{M \epsilon}
\end{align}
can be achieved by properly choosing the number of iterations $K_t$ in the AGD subroutine.

We first use induction to show that 
\begin{equation}
\left\|\boldsymbol{y}_t-\fy^*\left(\boldsymbol{x}_t\right)\right\| \leq \tilde{\epsilon} \label{eq:8}
\end{equation}
holds for any $t \geq 0$. For $t=0$, Lemma \ref{lem:AGD} directly implies $\left\|\boldsymbol{y}_0-\fy^*\left(\boldsymbol{x}_0\right)\right\| \leq \tilde{\epsilon}$. Suppose it holds that $\left\|\boldsymbol{y}_{t-1}-\fy^*\left(\boldsymbol{x}_{t-1}\right)\right\| \leq \tilde{\epsilon}$ for any $t=t^{\prime}-1$, then we have

\begin{align*}
& \left\|\boldsymbol{y}_{t^{\prime}}-\fy^*\left(\boldsymbol{x}_{t^{\prime}}\right)\right\| \\
\leq & \sqrt{\kappa+1}\left(1-\frac{1}{\sqrt{\kappa}}\right)^{K_{t^{\prime}} / 2}\left\|\boldsymbol{y}_{t^{\prime}-1}-\fy^*\left(\boldsymbol{x}_{t^{\prime}}\right)\right\| \\
\leq & \sqrt{\kappa+1}\left(1-\frac{1}{\sqrt{\kappa}}\right)^{K_{t^{\prime}} / 2}\left(\left\|\boldsymbol{y}_{t^{\prime}-1}-\fy^*\left(\boldsymbol{x}_{t^{\prime}-1}\right)\right\|+\left\|\fy^*\left(\boldsymbol{x}_{t^{\prime}-1}\right)-\fy^*\left(\boldsymbol{x}_{t^{\prime}}\right)\right\|\right) \\
\leq & \sqrt{\kappa+1}\left(1-\frac{1}{\sqrt{\kappa}}\right)^{K_{t^{\prime}} / 2}\left(\tilde{\epsilon}+\kappa\left\|\boldsymbol{x}_{t^{\prime}-1}-\boldsymbol{x}_{t^{\prime}}\right\|\right) \\
= & \sqrt{\kappa+1}\left(1-\frac{1}{\sqrt{\kappa}}\right)^{K_{t^{\prime}} /2}\left(\tilde{\epsilon}+\kappa\left\|\boldsymbol{s}_{t^{\prime}-1}^*\right\|\right) \leq \tilde{\epsilon},   
\end{align*}
where the first inequality is based on Lemma \ref{lem:AGD}; the second one use triangle inequality; the third one is based on induction hypothesis and Proposition~\ref{prop:condinum_y_star}; the last step use the definitions of $K_t$ and $\tilde{\epsilon}$.

Combining inequality $\eqref{eq:8}$ with Lemma \ref{lem:AGD}, Assumption \ref{assum:minimax}, we obtain

\begin{align*}
& \left\|\nabla \varphi\left(\boldsymbol{x}_t\right)- \g(\fx_t;\fy_t)\right\| \\ = & \left\|\nabla_x f\left(\boldsymbol{x}_t, \boldsymbol{y}_t\right)-\nabla_x f\left(\boldsymbol{x}_t, \fy^*\left(\boldsymbol{x}_t\right)\right)\right\| \\ \leq & \ell\left\|\boldsymbol{y}_t-\fy^*\left(\boldsymbol{x}_t\right)\right\| \leq \bar{C}_g \epsilon    
\end{align*}
 and
\begin{align*}
& \left\|\nabla^2 \varphi\left(\boldsymbol{x}_t\right)-\H(\fx_t;\fy_t)\right\| \\
= & \left\|\nabla^2 f\left(\boldsymbol{x}_t, \fy^*\left(\boldsymbol{x}_t\right)\right)-\nabla^2 f\left(\boldsymbol{x}_t, \boldsymbol{y}_t\right)\right\| \\
\leq & \rho\left\|\boldsymbol{y}_t-\fy^*\left(\boldsymbol{x}_t\right)\right\| \\
\leq & \bar{C}_H \sqrt{M \epsilon}.    
\end{align*}
 
The total gradient calls from AGD in Algorithm \ref{alg:LCMN} satisfy
{
\begin{align*}
& \sum_{t=0}^{T-1} K_t \\
\leq & 2 \sqrt{\kappa}\left[\log \left(\frac{\sqrt{\kappa+1}}{\tilde{\epsilon}}\left\|y^*\left(\boldsymbol{x}_0\right)\right\|\right)+\sum_{t=1}^T \log \left(\sqrt{\kappa+1}+\frac{\kappa \sqrt{\kappa+1}}{\tilde{\epsilon}}\left\|\boldsymbol{s}_{t-1}^*\right\|\right)\right]+T \\
= & \frac{2 \sqrt{\kappa}}{3}\left[3 \log \left(\frac{\sqrt{\kappa+1}}{\tilde{\epsilon}}\left\|y^*\left(\boldsymbol{x}_0\right)\right\|\right)+\sum_{t=1}^T \log \left(\sqrt{\kappa+1}+\frac{\kappa \sqrt{\kappa+1}}{\tilde{\epsilon}}\left\|\boldsymbol{s}_{t-1}^*\right\|\right)^3\right]+T \\
\leq & \frac{2 \sqrt{\kappa}}{3}\left[3 \log \left(\frac{\sqrt{\kappa+1}}{\tilde{\epsilon}}\left\|y^*\left(\boldsymbol{x}_0\right)\right\|\right)+\sum_{t=1}^T \log \left(8(\kappa+1)^{1.5}+\frac{8 \kappa^3(\kappa+1)^{1.5}}{\tilde{\epsilon}^3}\left\|\boldsymbol{s}_{t-1}^*\right\|^3\right)\right]+T \\
= & \frac{2 \sqrt{\kappa}}{3}\left[3 \log \left(\frac{\sqrt{\kappa+1}}{\tilde{\epsilon}}\left\| y^*\left(\boldsymbol{x}_0\right)\right\|\right)+\log \left(\prod_{t=1}^T\left(8(\kappa+1)^{1.5}+\frac{8 \kappa^3(\kappa+1)^{1.5}}{\tilde{\epsilon}^3}\left\|\boldsymbol{s}_{t-1}^*\right\|_2^3\right)\right)\right]+T \\
\leq & \frac{2 \sqrt{\kappa}}{3}\left[3 \log \left(\frac{\sqrt{\kappa+1}}{\tilde{\epsilon}}\left\|y^*\left(\boldsymbol{x}_0\right)\right\|\right)+\log \left(\frac{1}{T} \sum_{t=1}^T\left(8(\kappa+1)^{1.5}+\frac{8 \kappa^3(\kappa+1)^{1.5}}{\tilde{\epsilon}^3}\left\|\boldsymbol{s}_{t-1}^*\right\|^3\right)\right)^T\right]+T \\
= & \frac{2 \sqrt{\kappa} T}{3}\left[\frac{3}{T} \log \left(\frac{\sqrt{\kappa+1}}{\tilde{\epsilon}}\left\|y^*\left(\boldsymbol{x}_0\right)\right\|\right)+\log \left(8(\kappa+1)^{1.5}+\frac{8 \kappa^3(\kappa+1)^{1.5}}{T \tilde{\epsilon}^3} \sum_{t=1}^T\left\|\boldsymbol{s}_{t-1}^*\right\|^3\right)\right]+T,
\end{align*}
}
where the first inequality is based on the fact $(a+b)^3 \leq 8\left(a^3+b^3\right)$ for $a, b \geq 0$; the second inequality is based on AM–GM inequality.

Here we introduce $\epsilon^{\prime}=2^{-2.5} \epsilon$ to eliminate the constant term $4 \sqrt{2}$ in $M$. Connecting the upper bound of $\sum_{t=0}^{T-1} K_t$ and inequality \eqref{eq:upperbound_lazy_LMCN}, we have

{\small
\begin{align*}
&\sum_{t=0}^{T-1} K_t \\
& \leq T + \frac{2 \sqrt{\kappa} T}{3} \left( \frac{3}{T} \log\left(\frac{\sqrt{\kappa+1}}{\tilde{\epsilon}}\left\|\fy^*\left(\boldsymbol{x}_0\right)\right\|\right) \right)  +\frac{2 \sqrt{\kappa} T}{3}\log \left(8(3\kappa+1)^{1.5}+\frac{5760 \kappa^3(\kappa+1)^{1.5}}{T M \tilde{\epsilon}^{\prime 3}}\Delta\right)\\
& =\tilde{\mathcal{O}}\left(\sqrt{\kappa M} \epsilon^{-1.5}\right) =\tilde{\mathcal{O}}\left(\kappa^2 \sqrt{m\rho} \epsilon^{-1.5}\right).   
\end{align*}
}
\end{proof}

\section{The Details of Inexact Version of FSBA}
\label{app:ifsba}
In this section, we present the details of IFSBA method introduced in Section~\ref{sec:IFSBA}. It is worth emphasizing that IFSBA never explicitly constructs the Hessian ; all Hessian-related operations are carried out via Hessian–vector products, thereby avoiding any second-order oracle calls, matrix factorizations or inversions\cite{chen2022single}, as well as SVD for the projections\cite{huang2024optimal}.
\subsection{Construction of Matrix Chebyshev Polynomials Approximation}
We first present the details of constructing $\C_{1,t}$ and $\C_{2,t}$ such that
\begin{align*}
    \C_{1,t} \approx \left[\nabla_{y y}^2 g\left(\boldsymbol{x}, \boldsymbol{w}\right)\right]^{-1},
    \quad \C_{2,t} \approx    \left[\nabla_{y y}^2 \mathcal{L}_\lambda\left(\boldsymbol{x}, \boldsymbol{y}\right)\right]^{-1}.
\end{align*}

The following lemma
presents the upper bound of approximating the matrix inverse by Chebyshev polynomials.
\begin{lemma}[Section 9.6.1~\citet{axelsson1996iterative}] 
\label{lemma:cheby_approxiamte}
Suppose symmetric matrix $\X \in \mathbb{R}^{d \times d}$ satisfies $\mu' \I \preceq \X \preceq \ell' \I$ with $0 < \mu' \leq \ell' < 1$, then we have
\begin{equation*}
\left\| \X^{-1} - \left( \frac{c_0}{2} \I + \sum_{k=1}^{K'} c_k \T_k(\Z') \right) \right\|
\leq 
\frac{\sqrt{\ell' / \mu'} - 1}{\sqrt{\ell' \mu'}} 
\left( 1 - \frac{2}{\sqrt{\ell' / \mu'} + 1} \right)^{K'},    
\end{equation*}
where 
$\Z' = \frac{2}{\ell' - \mu'} \left( \X - \frac{\ell' + \mu'}{2} \I \right)$,
$c_k = \frac{2}{\sqrt{\ell' \mu'}} \left( \frac{\sqrt{\mu' / \ell'} - 1}{\sqrt{\mu' / \ell'} + 1} \right)^k$ for $k = 0,1,\dots,K'$,
and $\T_k(\cdot)$ are matrix Chebyshev polynomials defined by $\T_0(\Z') := \I$, $\T_1(\Z') = \Z'$, and $\T_k(\Z'): = 2 \Z' \T_{k-1}(\Z') - \T_{k-2}(\Z')$ for $k \geq 2$.
\end{lemma}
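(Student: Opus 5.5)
This is the classical Chebyshev-acceleration bound for the inverse of a well-conditioned SPD matrix. I would reduce it to a scalar approximation problem on the spectrum. Since $\X$ is symmetric, write $\X = \mathbf{U}\Lambda\mathbf{U}^{\top}$ with eigenvalues $\lambda_i \in [\mu',\ell']$. Then $\Z'$ and every $\T_k(\Z')$ are diagonalized by the same $\mathbf{U}$. The spectral norm of the error is therefore
\begin{equation*}
\max_{i} \Big| \frac{1}{\lambda_i} - p_{K'}(\lambda_i) \Big| ,
\end{equation*}
where $p_{K'}(\lambda) := \frac{c_0}{2} + \sum_{k=1}^{K'} c_k T_k(z(\lambda))$ and $z(\lambda) = \frac{2}{\ell'-\mu'}\big(\lambda - \frac{\ell'+\mu'}{2}\big) \in [-1,1]$. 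It then suffices to bound $\sup_{\lambda\in[\mu',\ell']} |1/\lambda - p_{K'}(\lambda)|$.

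The key step is to identify the full series $\frac{c_0}{2} + \sum_{k\ge1} c_k T_k(z)$ as the Chebyshev expansion of $1/\lambda$. Set $\kappa' = \ell'/\mu'$ and $q = \frac{\sqrt{\kappa'}-1}{\sqrt{\kappa'}+1}$, so that $c_k = \frac{2}{\sqrt{\ell'\mu'}}(-q)^k$. Also write $a = \frac{\ell'+\mu'}{\ell'-\mu'}$, so that $\lambda = \frac{\ell'-\mu'}{2}(z + a)$ with $a>1$.

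I would use the generating identity for Chebyshev polynomials. For $|r|<1$ and $z=\cos\theta$,
\begin{equation*}
1 + 2\sum_{k\ge1} r^k T_k(z) = \frac{1-r^2}{1-2rz+r^2}.
\end{equation*}
Take $r=-q$ and choose $q$ so that $\frac{1+q^2}{2q} = a$. A direct check shows the smaller root is exactly $q = \frac{\sqrt{\kappa'}-1}{\sqrt{\kappa'}+1}$. The right-hand side then becomes $\frac{1-q^2}{2q(z+a)}$. Multiplying by the constant $\frac{1}{\sqrt{\ell'\mu'}}$ and using $1-q^2 = \frac{4\sqrt{\kappa'}}{(\sqrt{\kappa'}+1)^2}$ and $2q = \frac{2(\sqrt{\kappa'}-1)}{\sqrt{\kappa'}+1}$, the product should simplify to $\frac{2}{(\ell'-\mu')(z+a)} = 1/\lambda$. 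This bookkeeping of constants is the step I expect to be fiddly; it is where the normalization $c_0/2$ and the prefactor $2/\sqrt{\ell'\mu'}$ must match exactly.

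Given the expansion, the truncation error is the tail. Because $|T_k(z)|\le1$ on $[-1,1]$,
\begin{equation*}
\Big|\frac1\lambda - p_{K'}(\lambda)\Big| \le \sum_{k>K'} |c_k| = \frac{2}{\sqrt{\ell'\mu'}}\frac{q^{K'+1}}{1-q}.
\end{equation*}
With $1-q = \frac{2}{\sqrt{\kappa'}+1}$, the right-hand side equals $\frac{(\sqrt{\kappa'}+1)q}{\sqrt{\ell'\mu'}}\,q^{K'} = \frac{\sqrt{\kappa'}-1}{\sqrt{\ell'\mu'}}\,q^{K'}$. Finally, $q = 1-\frac{2}{\sqrt{\kappa'}+1}$, which gives exactly the stated bound.

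The hypothesis $\ell'<1$ plays no role beyond ensuring $\mu'<\ell'$ is meaningful; the degenerate case $\mu'=\ell'$ gives $q=0$ and is trivial. Since $\T_k(\Z')$ obeys the same recursion as the scalar $T_k$, the matrix statement follows from the spectral reduction above.
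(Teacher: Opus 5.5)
Your proof is correct, and the constants you flagged as fiddly do match. With $q=\frac{\sqrt{\ell'/\mu'}-1}{\sqrt{\ell'/\mu'}+1}$ one has $c_k=\frac{2}{\sqrt{\ell'\mu'}}(-q)^k$, $\frac{1+q^2}{2q}=\frac{\ell'+\mu'}{\ell'-\mu'}$ and $\frac{1-q^2}{2q\sqrt{\ell'\mu'}}=\frac{2}{\ell'-\mu'}$. Hence $\frac{c_0}{2}+\sum_{k\ge1}c_kT_k(z)$ is exactly the absolutely convergent Chebyshev series of $1/\lambda$ on $[\mu',\ell']$, and $\frac{2}{\sqrt{\ell'\mu'}}\frac{q^{K'+1}}{1-q}$ simplifies to the stated right-hand side.

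The paper gives no proof of this lemma; it only cites Axelsson. Your route makes the result self-contained: diagonalize $\X$, identify the full expansion through the generating function $1+2\sum_{k\ge1}r^kT_k(z)=\frac{1-r^2}{1-2rz+r^2}$ at $r=-q$, and bound the geometric tail using $|T_k|\le 1$ on $[-1,1]$. It also shows two things the citation does not. First, the assumption $\ell'<1$ is never used. Second, the bound is sharp for this truncation: at $\lambda=\mu'$ (i.e.\ $z=-1$) every tail term $c_kT_k(-1)=\frac{2}{\sqrt{\ell'\mu'}}q^k$ is positive, so your inequality is an equality whenever $\mu'$ is an eigenvalue of $\X$.

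The only imprecision is your remark on $\mu'=\ell'$. There $\Z'$ involves division by $\ell'-\mu'=0$ and is undefined, so the statement implicitly assumes $\mu'<\ell'$ rather than covering that case trivially. As you say, $\X=\ell'\I$ is then inverted exactly by the constant term $\frac{c_0}{2}\I=\frac{1}{\ell'}\I$.
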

Since {$\mu \I \preceq \nabla^2_{yy}g(\fx,\fw)\preceq \ell \I$ and $\frac{\lambda \mu}{2} \I \preceq\nabla^2_{yy}\fL_{\lambda}(\fx,\fy)\preceq (1+\lambda)\ell \I$}, we constructed $\C_{1,t}$ and $\C_{2,t}$ according to
\begin{align}
\label{eq:construct_cheby}
    {\C}_{1,t} = \frac{c_{1,0}}{4\ell} \I + \frac{1}{2\ell} \sum_{k=1}^{K_1'} c_{1,k} \T_k(\Z_{1,t})~~~\text{and}~~~
   {\C}_{2,t} = \frac{c_{2,0}}{4(\lambda+1)\ell} \I + \frac{1}{2(\lambda+1)\ell} \sum_{k=1}^{K_2'} c_{2, k} \T_k(\Z_{2,t}),
\end{align}
where
\begin{align*}
 \Z_{1,t} = \frac{4\ell}{\ell - \mu} \left( \frac{1}{2\ell} \nabla_{yy}^2 g(\boldsymbol{x}_t, \boldsymbol{y}_t) - \frac{\ell + \mu}{4\ell} \I \right),~~~
\Z_{2,t} = \frac{2}{2(\lambda+1)\ell-\lambda\mu} \left( 2 \nabla_{yy}^2 \mathcal{L}_\lambda(\boldsymbol{x}_t, \boldsymbol{y}_t) - ((\lambda+1)\ell+ \frac{\lambda\mu}{2}) \I \right),
\end{align*}
and $\{c_{1,k}\}, 
\{c_{2,k}\}$ computed by
\begin{align*}
  c_{1,k}=\frac{2}{\sqrt{\ell \mu}} \left( \frac{\sqrt{\mu / \ell} - 1}{\sqrt{\mu / \ell} + 1} \right)^k~~~\text{and}~~~c_{2,k}= \frac{2}{\sqrt{(1+\lambda)\ell\lambda \mu/2}} \left( \frac{\sqrt{\frac{\lambda \mu}{2(1+\lambda)\ell} } - 1}{\sqrt{\frac{\lambda \mu}{2(1+\lambda) \ell } } + 1} \right)^k.  
\end{align*}

Then, we are able to bound the difference between $\C(\fx_t;\fy_t,\fw_t)$ and $\nabla^2 \fL_{\lambda}^*(\fx_t)$ by combining the statements of Lemma~\ref{lem:estimators_error} and Lemma~\ref{lemma:cheby_approxiamte}.
\begin{lemma} \label{lemma:cheyb_error_bound}
Using the notation of Algorithm~\ref{alg:IFSBA}, under {Assumption~\ref{assum:basic}}, we have
\begin{align*}
&\left\|{\nabla}^2 \mathcal{L}_\lambda^*\left(\boldsymbol{x_t}\right)-\C(\boldsymbol{x}_t;\boldsymbol{y}_t,\boldsymbol{w}_t)\right\|\\
&\leq 
C_1 \left\|\boldsymbol{w}_t-\boldsymbol{y}^*\left(\boldsymbol{x}_t\right) \right\| + C_2
 \left\|\boldsymbol{y}_t -\boldsymbol{y}_\lambda^*\left(\boldsymbol{x}_t\right)\right\|+ \kappa \ell \left( 1 - \frac{2}{\sqrt{\kappa} + 1} \right)^{K_{1}'}+
6
(\lambda+1)\kappa \ell \left( 1 - \frac{2}{\sqrt{3\kappa} + 1} \right)^{K_{2}'}.
\end{align*}
\end{lemma}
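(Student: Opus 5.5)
The plan is a triangle inequality through the exact-inverse estimator. We write
\[
\|\nabla^2\fL_\lambda^*(\fx_t)-\C_t\|\le \|\nabla^2\fL_\lambda^*(\fx_t)-\H(\fx_t;\fy_t,\fw_t)\|+\|\H(\fx_t;\fy_t,\fw_t)-\C_t\|.
\]
The first term is bounded directly by Lemma~\ref{lem:estimators_error}, which gives the $C_1\|\fw_t-\fy^*(\fx_t)\|+C_2\|\fy_t-\fy_\lambda^*(\fx_t)\|$ part of the claim. It remains to control $\H-\C$.

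Comparing \eqref{eq:Hess-est} with \eqref{eq:Cheby-est}, the $\nabla^2_{xx}$ terms cancel exactly. Hence
\[
\H-\C=\lambda\nabla^2_{xy}g(\fx_t,\fw_t)\big([\nabla^2_{yy}g(\fx_t,\fw_t)]^{-1}-\C_{1,t}\big)\nabla^2_{yx}g(\fx_t,\fw_t)-\nabla^2_{xy}\fL_\lambda(\fx_t,\fy_t)\big([\nabla^2_{yy}\fL_\lambda(\fx_t,\fy_t)]^{-1}-\C_{2,t}\big)\nabla^2_{yx}\fL_\lambda(\fx_t,\fy_t).
\]
The cross blocks are bounded by the gradient-Lipschitz constants: $\|\nabla^2_{xy}g\|\le\ell$ and $\|\nabla^2_{xy}\fL_\lambda\|\le(1+\lambda)\ell$. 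This gives
\[
\|\H-\C\|\le \lambda\ell^2\,\|[\nabla^2_{yy}g]^{-1}-\C_{1,t}\|+(1+\lambda)^2\ell^2\,\|[\nabla^2_{yy}\fL_\lambda]^{-1}-\C_{2,t}\|.
\]

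Each inverse-approximation error comes from Lemma~\ref{lemma:cheby_approxiamte} after rescaling. For $g$ we set $\X=\nabla^2_{yy}g/(2\ell)$, so that $\mu/(2\ell)\I\preceq\X\preceq\frac12\I$, with the interval endpoints strictly below $1$. Since $(\nabla^2_{yy}g)^{-1}=\X^{-1}/(2\ell)$, the construction \eqref{eq:construct_cheby} is exactly the Chebyshev approximant of $\X^{-1}$ divided by $2\ell$. The lemma with $\ell'/\mu'=\kappa$ then gives an error of order $\frac{\sqrt\kappa-1}{\sqrt{\ell'\mu'}}\cdot\frac{1}{2\ell}(1-\frac{2}{\sqrt\kappa+1})^{K_1'}=\OM(\kappa/\ell)\,(1-\frac{2}{\sqrt\kappa+1})^{K_1'}$, using $\sqrt{\ell'\mu'}=\sqrt{\mu/\ell}/2$.

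For $\fL_\lambda$ we proceed the same way. We rescale by $2(1+\lambda)\ell$ and use Proposition~\ref{prop:strong_convex_L}, which gives $\frac{\lambda\mu}{2}\I\preceq\nabla^2_{yy}\fL_\lambda\preceq(1+\lambda)\ell\I$ and condition number at most $3\kappa$. This yields an error $\OM(\kappa/((1+\lambda)\ell))\,(1-\frac{2}{\sqrt{3\kappa}+1})^{K_2'}$.

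Multiplying these errors by the prefactors gives terms of order $\lambda\kappa\ell$ and $(1+\lambda)\kappa\ell$. These match the stated $6(\lambda+1)\kappa\ell$ coefficient on the second term. The first term's coefficient $\kappa\ell$ is only obtained if the $\lambda$ factor on the $g$ part is absorbed, or if the paper's stated constant is slightly loose. I would carry the constants honestly and note that an extra $\lambda$ factor may be needed there.

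The main obstacle is the bookkeeping in the rescaling step. One must check that the stated $\Z_{1,t},\Z_{2,t}$ and the coefficients $c_{1,k},c_{2,k}$ coincide with Lemma~\ref{lemma:cheby_approxiamte} applied to the rescaled matrices, including which interval endpoints $\mu',\ell'$ are used. One must also track the factor $1/\sqrt{\ell'\mu'}$ to obtain the exact constants $\kappa\ell$ and $6(\lambda+1)\kappa\ell$. A further point: $\Z_{1,t}$ is written with $\nabla^2_{yy}g(\fx_t,\fy_t)$ rather than $(\fx_t,\fw_t)$. I would treat this as a typo and evaluate at $\fw_t$ consistently. Otherwise an additional $\rho\|\fy_t-\fw_t\|$ error would appear.
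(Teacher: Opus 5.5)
Your plan is the paper's proof. It uses the triangle inequality through $\H(\fx_t;\fy_t,\fw_t)$, Lemma~\ref{lem:estimators_error} for the first piece, and Lemma~\ref{lemma:cheby_approxiamte} after rescaling by $2\ell$ and $2(1+\lambda)\ell$, with $\|\nabla^2_{xy}g\|\le\ell$ and $\|\nabla^2_{xy}\fL_\lambda\|\le(1+\lambda)\ell$, to handle $\H-\C$.

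Your suspicion about the first Chebyshev term is right. The paper's proof silently drops the prefactor $\lambda$ that the $g$-block carries in \eqref{eq:Cheby-est}, so the argument actually gives $\lambda\ell(\kappa-\sqrt{\kappa})(1-\frac{2}{\sqrt{\kappa}+1})^{K_1'}$ there. This $\lambda$ cannot be removed in general, because the Chebyshev tail bound is attained when $\nabla^2_{yy}g$ has eigenvalue $\mu$. The correction is harmless downstream: the choice of $K_1'$ in Lemma~\ref{lem:IFSBA_inner_complexity} already has the factor $(\lambda+1)$ inside its logarithm.
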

\subsection{Gradient-Based Subproblem Solver}
In this section, we formally present the subroutines Cubic-Solver and Final-Cubic-Solver (line 7 and line 10 in Algorithm~\ref{alg:IFSBA}) to solve the following cubic-regularized problem
\begin{equation}
\label{eq:cubic}
\boldsymbol{s}_t \approx \argmin_{\boldsymbol{s} \in \mathbb{R}^{d_x}} m_t(\boldsymbol{s}) := \boldsymbol{g}_t^\top \boldsymbol{s} + \frac{1}{2} \boldsymbol{s}^\top \C_t \boldsymbol{s} + \frac{M}{6} \|\boldsymbol{s}\|^3.
\end{equation}
We introduce the Cubic-Solver and Final-Cubic-Solver in Algorithm~\ref{alg:cubic_sub} and Algorithm~\ref{alg:final_cubic}, respectively. 
Cubic-Solver constructs gradient-based update to approximately solve~\eqref{eq:cubic} with desired accuracy in high probability.
When $\Delta_t\geq -\frac{\epsilon^3}{128M}$, we run Final-Cubic-Solver to guarantee that the output $\fx_{t+1}$ is an $(\epsilon,\OM(\sqrt{\epsilon}))$ SOSP of $\fL^*_{\lambda}(\cdot)$.

\begin{algorithm}
\caption{Cubic-Solver($\g,\H,\sigma, \mathcal{K}(\epsilon,\delta')$)}
\label{alg:cubic_sub}
\begin{algorithmic}[1]
\STATE \textbf{Input:} $\g, \H, \sigma, \mathcal{K}(\epsilon, \delta')$
\vskip 0.2cm
\IF{$\|\g\| \geq L^2 / M$}
    \STATE $R_C = -\dfrac{\g^\top \H \g}{M \|\g\|^2} + \sqrt{(\frac{\g^\top \H \g}{M \|\g\|^2})^2+\frac{2\|\g\|}{M}}$
    \STATE $\hat{\boldsymbol{s}} = -R_C \cdot \g / \|\g\|$
\ELSE
    \STATE $\boldsymbol{s}_0 = \mathbf{0},\ \eta = 1/(20L)$
    \STATE $\tilde{\g} = \g + \sigma \boldsymbol{\zeta},\ \text{where}\ \boldsymbol{\zeta} \sim \operatorname{Uniform}(\mathcal{S}^{d-1})$
    \FOR{$k = 0, 1, \cdots, \mathcal{K}(\epsilon, \delta')-1$}
        \STATE $\boldsymbol{s}_{k+1} = \boldsymbol{s}_k - \eta (\tilde{\g} + \H \boldsymbol{s}_k + \dfrac{M}{2} \|\boldsymbol{s}_k\| \boldsymbol{s}_k)$
    \ENDFOR
    \STATE $\hat{\boldsymbol{s}} = \boldsymbol{s}_{\mathcal{K}(\epsilon, \delta')}$
\ENDIF
\vskip 0.2cm
\STATE \textbf{Output:} $\hat{\boldsymbol{s}}$ and $\Delta = \g^\top \hat{\boldsymbol{s}} + \dfrac{1}{2} \hat{\boldsymbol{s}}^\top \H \hat{\boldsymbol{s}} + \dfrac{M}{6} \|\hat{\boldsymbol{s}}\|^3$
\end{algorithmic}
\end{algorithm}

\begin{algorithm}
\caption{Final-Cubic-Solver}
\label{alg:final_cubic}
\begin{algorithmic}[1]
\STATE \textbf{Input:} $\g, \H, \epsilon$
\vskip 0.2cm
\STATE $\boldsymbol{s}_0 = \mathbf{0},\ \g_0 = \g,\ \eta = 1/(20L)$
\FOR{$t = 0, 1, \cdots$}
    \IF{$\|\g_t\| \leq \epsilon / 2$}
        \STATE \textbf{break}
    \ENDIF
    \STATE $\boldsymbol{s}_{t+1} = \boldsymbol{s}_t - \eta \g_t$
    \STATE $\g_{t+1} = \g + \H \boldsymbol{s}_{t+1} + \dfrac{M}{2} \|\boldsymbol{s}_{t+1}\|\boldsymbol{s}_{t+1}$
\ENDFOR
\vskip 0.2cm
\STATE \textbf{Output:} $\hat{\boldsymbol{s}} = \boldsymbol{s}_t$
\end{algorithmic}
\end{algorithm}


\subsection{The Convergence Analysis}
We provide the convergence analysis for IFSBA (Algorithm~\ref{alg:IFSBA}), following the same assumptions and notations as those used in section~\ref{sec:FSBA}. 
We suppose $\epsilon \leq \frac{L^2}{M}$, otherwise, the second-order condition $\nabla^2 \mathcal{L}_\lambda^*(\boldsymbol{x}_t) \succeq -\sqrt{M\epsilon}\I$ always holds and we only need to use gradient methods to find first-order stationary point.

The following lemma indicates that once $\g(\fx_t;\fy_t,\fw_t)$ and $\C(\fx_t;\fy_t,\fw_t)$ approximate $\nabla\fL_{\lambda}^*(\fx_t)$ and $\nabla^2 \fL_{\lambda}^*(\fx_t)$ well and Cubic-Solver iterates with sufficient steps, then IFSBA enjoys a similar iteration complexity as FSBA with high probability.
\begin{lemma}[Theorem 3, \citet{Luo22finding}]
\label{lem:IFSBA_convergence}
Under Assumption~\ref{assum:basic}, if we
run Algorithm~\ref{alg:IFSBA} with $\delta' = \delta / T$, 
$T = \left\lceil626 \left(\fL_{\lambda}^*(\fx_0)-\min_{\fx}\fL_{\lambda}^*(\fx)\right)\sqrt{M} \, \epsilon^{-1.5} \right\rceil$, 
and suppose the iterations $K_t^1$, $K_t^2$ of AGD and the order $K_1'$, $K_2'$ of Chebyshev-Polynomials in \eqref{eq:construct_cheby} are sufficiently large such that the following condition
\begin{align}
\label{eq:inexact-cubic-iteration-cheby}
\begin{aligned}
\left\|\nabla \mathcal{L}_\lambda^*\!\left(\boldsymbol{x}_t\right)
      - \g(\fx_t;\fy_t,\fw_t)\right\|
\le{ \tilde{C}_g} \epsilon,~~~~
\left\|\nabla^2 \mathcal{L}_\lambda^*\!\left(\boldsymbol{x}_t\right)
      - \C(\fx_t;\fy_t,\fw_t)\right\|
\le \tilde{C}_H \sqrt{M\epsilon},
\end{aligned}
\end{align}
hold with $\tilde{C}_g = 1/240$, $\tilde{C}_H = 1/200$, and the hyperparameters of Cubic-Solver (Algorithm~\ref{alg:cubic_sub}) satisfies that
{\begin{align*}
 \eta\! = \!\frac{1}{20L}, \sigma\! = \!\frac{C_{\sigma}M^2\sqrt{\epsilon^3/M^3}}{4608(4L\!+\!\sqrt{M\epsilon})}, \mathcal{K}(\epsilon, \delta')\!=\! \frac{19200L}{C_{\sigma}\sqrt{M\epsilon}} \!(6\!\log(\!3\!+\!\frac{9\sqrt{d_x}}{\delta'}\!)
 \!+\! 18\log(\frac{6L}{\sqrt{M\epsilon}})\!+\! 14 \log(\frac{48(L\! + \!\tilde{C}_H \sqrt{M\epsilon})}{C_{\sigma}\sqrt{M\epsilon}}\!+\!\frac{24}{C_{\sigma}})
 )\!
\end{align*}
}
for some $C_{\sigma}>0$, then the condition $\Delta_t\geq -\frac{1}{128}\frac{\epsilon^3}{M}$ must hold within no more than 
$T = \OM\left( \kappa^{2.5} \sqrt{\bar{\ell} }\, \epsilon^{-1.5} \right)$ iterations; 
and the output $\hat{\boldsymbol{x}}$ is an $(\epsilon, \OM(\kappa^{2.5} \sqrt{\bar{{\ell}}\epsilon} ))$-SOSP of $\fL^*_{\lambda}(\cdot)$
with probability at least $1 - \delta$.
\end{lemma}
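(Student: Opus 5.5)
This lemma is an adaptation of Theorem~3 of \citet{Luo22finding}, the inexact minimax cubic Newton analysis, to the proxy $\fL_{\lambda}^*(\cdot)$. The plan is to reduce it to that argument through the structural facts already established. $\fL_{\lambda}^*$ has $L$-Lipschitz gradient and $\bar{\rho}$-Lipschitz Hessian by Proposition~\ref{prop:proxy_property}, and at each iterate we have access to a pair $(\g_t,\C_t)$ satisfying \eqref{eq:inexact-cubic-iteration-cheby}. Once these hold, the analysis of \citet{Luo22finding} only uses $\fL_{\lambda}^*$ as a black-box smooth nonconvex function with inexact gradient and Hessian, so it transfers directly.

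\textbf{Step 1: one-iteration descent.} For the Cubic-Solver output I would show the following, with probability $1-\delta'$. In the Cauchy-step branch ($\|\g_t\|\ge L^2/M$), $m_t(\hat{\fs})$ is bounded by the Cauchy-point value. In the perturbed gradient-descent branch, after $\mathcal{K}(\epsilon,\delta')$ steps with $\eta=1/(20L)$ and perturbation $\sigma$, we get $m_t(\fs_t)\le m_t(\fs^*_t)+$ a small slack, and $\|\fs_t\|\le\|\fs^*_t\|$. This is the standard Carmon--Duchi result for gradient descent on the cubic model. The random perturbation removes the ``hard case'', and the $\log(\sqrt{d_x}/\delta')$ term in $\mathcal{K}$ comes from a union over the sphere measure.

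Combining this with the cubic upper bound \eqref{eq:Hessian_smmoth_2} and $M\ge\bar\rho$, the error terms $\tilde C_g\epsilon\|\fs_t\|$ and $\tilde C_H\sqrt{M\epsilon}\|\fs_t\|^2$ are absorbed by Young's inequality. This yields
\[
\fL_{\lambda}^*(\fx_{t+1})\le \fL_{\lambda}^*(\fx_t)+\Delta_t+c\,\epsilon^{1.5}/\sqrt{M}
\]
for a small constant $c$.

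\textbf{Step 2: counting and termination.} While $\Delta_t<-\epsilon^3/(128M)$, I would rescale so the decrease per step is $\Omega(\epsilon^{1.5}/\sqrt M)$; the constants $1/240$, $1/200$ and the factor $626$ are tuned for this. Telescoping against $\fL_{\lambda}^*(\fx_0)-\min\fL_{\lambda}^*$ gives at most $T=\lceil626(\cdot)\sqrt M\epsilon^{-1.5}\rceil$ such steps. A union bound with $\delta'=\delta/T$ gives overall success probability $1-\delta$. Plugging $M=\Theta(\bar\rho)=\Theta(\bar\ell\kappa^5)$ and the gap $\OM(\Delta)$ from the proof of Theorem~\ref{thm:FSBA} gives $T=\OM(\kappa^{2.5}\sqrt{\bar\ell}\epsilon^{-1.5})$.

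\textbf{Step 3: certificate at termination.} When $\Delta_t\ge-\epsilon^3/(128M)$, I need to show two things.

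First, the Hessian condition. Because $m_t$'s approximate minimum value is not much below zero, the exact model minimizer $\fs^*_t$ has small norm, $\|\fs^*_t\|=\OM(\sqrt{\epsilon/M})$, using \eqref{eq:stationary_condition_3}. Then \eqref{eq:stationary_condition_2} gives $\C_t\succeq-\OM(\sqrt{M\epsilon})\I$, and therefore $\nabla^2\fL_{\lambda}^*(\fx_t)\succeq-\OM(\sqrt{M\epsilon})\I$.

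Second, the gradient condition. Final-Cubic-Solver runs gradient descent until $\|\nabla m_t(\hat\fs)\|\le\epsilon/2$ while keeping $\|\hat\fs\|=\OM(\sqrt{\epsilon/M})$. Using the gradient-Lipschitz bound from Lemma~\ref{lem:property_Hessain_smooth}, the gradient at $\fx_t+\hat\fs$ is at most
\[
\epsilon/2+\tilde C_g\epsilon+\tilde C_H\sqrt{M\epsilon}\|\hat\fs\|+\tfrac{M+\bar\rho}{2}\|\hat\fs\|^2=\OM(\epsilon).
\]
The Hessian bound moves by only $\bar\rho\|\hat\fs\|=\OM(\sqrt{M\epsilon})$. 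With $\sqrt{M\epsilon}=\OM(\kappa^{2.5}\sqrt{\bar\ell\epsilon})$, this gives the claimed SOSP.

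\textbf{Main obstacle.} I expect the hardest part to be the termination and Final-Cubic-Solver analysis, in particular keeping $\|\hat\fs\|$ bounded along gradient descent on a nonconvex cubic model with only an approximately-small-value guarantee. I would first try the monotonicity argument of \citet{Carmon2019}-type analyses: gradient descent iterates started at $0$ have nondecreasing norm bounded by $\|\fs^*\|$. I would then transfer the constants from Theorem~3 of \citet{Luo22finding} verbatim rather than re-deriving them.
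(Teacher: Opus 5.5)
Your route is the one the paper relies on. The paper gives no argument of its own for this lemma: it imports Theorem~3 of \citet{Luo22finding}, and in the proof of Theorem~\ref{thm:imcn_main_complexity} their Lemma~16. That proof follows the scheme of \citet{tripuraneni2018stochastic} that you outline, with $\fL_\lambda^*$ entering only through $L$ and $\bar\rho$.

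The step that fails as written is the ``rescale'' in Step~2. The termination threshold is a fixed algorithm parameter, and $\epsilon^3/M=(\epsilon^{3/2}/\sqrt{M})^2$ has the units of a squared function value. For small $\epsilon$, a model decrease $\Delta_t<-\epsilon^3/(128M)$ is swamped by the slack $c\,\epsilon^{3/2}/\sqrt{M}$ from your Step~1. So no per-step decrease of $\fL_\lambda^*$ follows, and the count against $T$ breaks. Both your Step~2 and your bound $\|\fs_t^*\|=\OM(\sqrt{\epsilon/M})$ in Step~3 are calibrated to the threshold $\frac{1}{128}\sqrt{\epsilon^3/M}$. That is the threshold the paper actually uses when it quotes Lemma~16: a model value at most $-\frac{1}{128}\sqrt{\epsilon^3/M}$ yields a decrease of at least $\frac{1}{626}\sqrt{\epsilon^3/M}$, which is where the factor $626$ in $T$ comes from. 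You should state explicitly that you read the test this way; the stated $\epsilon^3/(128M)$ is a typo, not something that can be rescaled away.

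Beyond that, two steps need an extra argument.

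\emph{Cauchy branch in Step~3.} Deducing a small $\|\fs_t^*\|$ from $\Delta_t\ge-\frac{1}{128}\sqrt{\epsilon^3/M}$ uses near-optimality of the subsolver output, which you have only in the perturbed-GD branch. You must also check that the Cauchy branch never triggers the test. There $\Delta_t=-\frac{R_C\|\g_t\|}{2}-\frac{M}{12}R_C^3$. Using $\|\C_t\|\le(1+\tilde{C}_H)L$, $\|\g_t\|\ge L^2/M$ and $\epsilon\le L^2/M$, this gives $\Delta_t\le-0.36\,\epsilon^{3/2}/\sqrt{M}$.

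\emph{Absorption in Step~1.} Young's inequality turns $\tilde{C}_H\sqrt{M\epsilon}\|\fs_t\|^2$ into $aM\|\fs_t\|^3+b\,\epsilon^{3/2}/\sqrt{M}$. With only $M\ge\bar\rho$, the bound \eqref{eq:Hessian_smmoth_2} leaves no negative cubic term to absorb the first piece. You need $\Delta_t\le-\frac{M}{12}\|\fs_t\|^3$ up to slack. This is exact for the Cauchy point. For the GD output it follows from your near-optimality claim, \eqref{eq:stationary_condition_3}, and $\|\fs_t\|\lesssim\|\fs_t^*\|$. The cost is that you keep only a fraction of $\Delta_t$, which is why Lemma~16 carries the extra $\frac{M}{256}\|\fs_t\|^3$ term.

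Finally, the first SOSP entry must be exactly $\epsilon$, not $\OM(\epsilon)$. With $\|\hat{\fs}\|\le\|\fs_t^*\|<\frac12\sqrt{\epsilon/M}$ and $\bar\rho\le M$, your gradient bound is at most $(\frac12+\frac{1}{240}+\frac{1}{400}+\frac14)\epsilon<\epsilon$, so this is only bookkeeping.
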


We provide the following lemma to satisfy the condition~\eqref{eq:inexact-cubic-iteration-cheby}.

\begin{lemma}
\label{lem:IFSBA_inner_complexity}
Under Assumption~\ref{assum:basic}, let $\epsilon_{H}>0$, $C_2 = \OM(\lambda\bar{\ell}\kappa^2)$, $\tilde{\epsilon} = \min\left\{ \frac{\tilde{C}_g \epsilon}{2\lambda\ell},\; \frac{\min\{ \tilde{C}_H \sqrt{M\epsilon} , \epsilon_{\mathrm{H}}L \}}{4 {C_2}} \right\}$, $R=\max(\left\|\fy^*\left(\boldsymbol{x}_0\right)\right\|, \left\|\fy_\lambda^*\left(\boldsymbol{x}_0\right)\right\|)$, and $\Delta=\varphi\left(\boldsymbol{x}_0\right) -\varphi^{*}$. 
if we run Algorithm~\ref{alg:IFSBA} with the same settings as in Lemma~\ref{lem:IFSBA_convergence} and $\lambda = \max \left\{ \bar{\ell} \kappa^2 / \Delta, \bar{\ell}\kappa^3 / \epsilon, \bar{\ell}\kappa^5 / \sqrt{M\epsilon}\right\}$, $\kappa_1 = \kappa$, $\ell_1 = \ell$, $\kappa_2 = 3\kappa$, $\ell_2 = (1+\lambda)\ell$,
the order $K_1', K_2'$ of Chebyshev Polynomials in \eqref{eq:construct_cheby} is
\begin{equation*}
K_1'=K_2' = \frac{\sqrt{3\kappa} + 1}{2} \log\left( \frac{24(\lambda+1)\kappa \ell}{ \min\{\tilde{ C}_H \sqrt{M\epsilon} , \epsilon_{\mathrm{H}} L\}} \right)   
\end{equation*}
and the number of iterations of AGD subroutine as 
\begin{align*}
 K_t^1 = K_t^2 =\begin{cases}
    & \left\lceil 2 \sqrt{3\kappa} \log \left(\frac{\sqrt{3\kappa+1}}{\tilde{\epsilon}}R\right)\right\rceil~~~~~~~~~~~~~~~~~~~~~~~~~~~~~~~~~~t=0 \\[0.3cm]
     & \left\lceil 2 \sqrt{3\kappa} \log \left(\frac{\sqrt{3\kappa+1}}{\tilde{\epsilon}}\left(\tilde{\epsilon}+4\kappa\left\|\boldsymbol{x}_t-\boldsymbol{x}_{t-1}\right\|\right)\right)\right\rceil~~~t\geq 1
 \end{cases},
\end{align*}
then the condition \eqref{eq:inexact-cubic-iteration-cheby} in Lemma~\ref{lem:IFSBA_convergence} is satisfied. 

Note that the value of $K'_1=K'_2 = O(\sqrt{\kappa})$ corresponds to the number of Hessian-vector product calls per iteration of the cubic subproblem solver (Algorithms~\ref{alg:cubic_sub} and~\ref{alg:final_cubic}). Combining Lemma~\ref{lem:IFSBA_convergence}, Lemma~\ref{lem:IFSBA_inner_complexity}, and the value of $\mathcal{K}(\epsilon, \delta')$, we obtain the main result for Algorithm~\ref{alg:IFSBA} as follows.
\end{lemma}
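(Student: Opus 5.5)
The plan mirrors the proof of Lemma~\ref{lem:exact_enough}, adding a separate budget for the Chebyshev truncation error. The error bound in Lemma~\ref{lemma:cheyb_error_bound} has four terms: the two AGD-dependent terms $C_1\|\fw_t-\fy^*(\fx_t)\|+C_2\|\fy_t-\fy_\lambda^*(\fx_t)\|$ and two Chebyshev truncation terms. I will make each of the four at most a quarter of $\min\{\tilde C_H\sqrt{M\epsilon},\epsilon_H L\}$. Separately, I will use Lemma~\ref{lem:estimators_error} to bound the gradient error by $2\lambda\ell\tilde\epsilon+\lambda\ell\tilde\epsilon$.

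\textbf{Step 1 (AGD accuracy by induction).} I will show $\|\fy_t-\fy_\lambda^*(\fx_t)\|\le\tilde\epsilon$ and $\|\fw_t-\fy^*(\fx_t)\|\le\tilde\epsilon$ for all $t$, exactly as in Lemma~\ref{lem:exact_enough}.
\begin{itemize}
\item The base case uses Lemma~\ref{lem:AGD} with zero initialization and the definition of $K_0$, together with $\|\fy_\lambda^*(\fx_0)\|,\|\fy^*(\fx_0)\|\le R$.
\item The inductive step uses warm starts and the triangle inequality. The drift $\|\fy^*(\fx_t)-\fy^*(\fx_{t-1})\|$ is at most $\kappa\|\fx_t-\fx_{t-1}\|$ by Proposition~\ref{prop:condinum_y_star}, and $\fy_\lambda^*$ is $4\kappa$-Lipschitz by Lemma~\ref{lem:gradient_smooth_y_star}.
\item In IFSBA the step is $\fx_t-\fx_{t-1}$ (possibly from Final-Cubic-Solver), which is why the statement's $K_t$ uses $\|\fx_t-\fx_{t-1}\|$ rather than $\|\fs^*_{t-1}\|$.
\item A single count $K=2\sqrt{3\kappa}\log(\cdot)$ suffices for both subproblems, since $\kappa_1=\kappa\le3\kappa=\kappa_2$ and the contraction factor $(1-1/\sqrt{\kappa_h})^{K/2}$ is monotone in $\kappa_h$.
\end{itemize}

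\textbf{Step 2 (gradient condition).} From Lemma~\ref{lem:estimators_error}, the gradient error is at most $3\lambda\ell\tilde\epsilon$. The choice $\tilde\epsilon\le\tilde C_g\epsilon/(2\lambda\ell)$ gives $3\lambda\ell\tilde\epsilon\le\tfrac32\tilde C_g\epsilon$, which overshoots the required $\tilde C_g\epsilon$ by a factor of $3/2$. I will absorb this mismatch either by a constant adjustment in $\tilde\epsilon$ or by noting that Lemma~\ref{lem:IFSBA_convergence} tolerates constant slack. This constant bookkeeping is the step most likely to need care.

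\textbf{Step 3 (Hessian condition, AGD part).} I need $C_1\tilde\epsilon\le\tilde C_H\sqrt{M\epsilon}/4$ as well as the analogous bound for $C_2$. Since $C_1=\OM(\lambda\bar\ell+\bar\ell\kappa^2)$ and $C_2=\OM(\lambda\bar\ell\kappa^2)$, we have $C_1\le C_2$ once $\lambda\ge2\ell/\mu$, which holds by the choice of $\lambda$. The definition of $\tilde\epsilon$ therefore makes the sum of these two terms at most $\min\{\tilde C_H\sqrt{M\epsilon},\epsilon_H L\}/2$.

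\textbf{Step 4 (Hessian condition, Chebyshev part).} This is the main obstacle. I need
\[
\kappa\ell\Bigl(1-\tfrac{2}{\sqrt{\kappa}+1}\Bigr)^{K_1'}+6(\lambda+1)\kappa\ell\Bigl(1-\tfrac{2}{\sqrt{3\kappa}+1}\Bigr)^{K_2'}
\]
to be at most $\min\{\tilde C_H\sqrt{M\epsilon},\epsilon_H L\}/2$.
\begin{itemize}
\item Both contraction factors are at most $1-2/(\sqrt{3\kappa}+1)$, using $\sqrt\kappa\le\sqrt{3\kappa}$.
\item Using $(1-a)^{K}\le e^{-aK}$ with $K=\frac{\sqrt{3\kappa}+1}{2}\log(\cdot)$, each power is at most the reciprocal of the log's argument, namely $\min\{\cdot\}/(24(\lambda+1)\kappa\ell)$.
\item The sum is then at most $(1+6(\lambda+1))\kappa\ell\cdot\min\{\cdot\}/(24(\lambda+1)\kappa\ell)\le\tfrac{7}{24}\min\{\cdot\}$, which is below $1/2$ of the target.
\end{itemize}

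Adding Steps 3 and 4 gives the Hessian condition \eqref{eq:inexact-cubic-iteration-cheby}.
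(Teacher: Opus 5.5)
Your proposal follows the paper's proof essentially step for step. The paper likewise imports the AGD induction from Lemma~\ref{lem:exact_enough} and then reads both estimates off Lemma~\ref{lem:estimators_error} and Lemma~\ref{lemma:cheyb_error_bound}. Your Steps~3--4 correctly carry out the bookkeeping the paper leaves implicit, giving a total Hessian error of at most $\tfrac{19}{24}\min\{\tilde C_H\sqrt{M\epsilon},\epsilon_H L\}$.

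Two remarks.

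First, justify $C_1\le C_2$ from the explicit constants in the proof of Lemma~\ref{lem:estimators_error}, where it holds term by term for every $\lambda>0$. It cannot be inferred from the $\mathcal{O}$-expressions, which never yield an inequality with constant one.

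Second, the mismatch you flag in Step~2 is real, and it is a slip in the statement rather than in your argument. The paper's proof simply asserts $2\lambda\ell\tilde\epsilon+\lambda\ell\tilde\epsilon\le\tilde C_g\epsilon$, which the choice $\tilde\epsilon\le\tilde C_g\epsilon/(2\lambda\ell)$ does not deliver.
\begin{itemize}
\item Resolve it with your first option: replace $2\lambda\ell$ by $3\lambda\ell$, or by $4\lambda\ell$ as in Lemma~\ref{lem:exact_enough}. This only changes constants inside the logarithms defining $K_t^1,K_t^2$, so every complexity bound is unchanged up to constants.
\item Your second option is not available as stated. The constants $\tilde C_g=1/240$ and $\tilde C_H=1/200$ are inherited from Theorem~3 of \citet{Luo22finding}, and tolerating extra slack would require re-deriving them.
\end{itemize}
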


\begin{theorem}
\label{thm:imcn_main_complexity}
Under Assumption~\ref{assum:basic}, run Algorithm~\ref{alg:IFSBA} under the same setting as in Lemma~\ref{lem:IFSBA_convergence} and \ref{lem:IFSBA_inner_complexity}, let $M=\Omega( \bar{\rho})$, 
$T = \Theta((\varphi(\fx_0)-\varphi^*)\sqrt{M}\epsilon^{-3/2})$, then
$\hat{\fx}$ is an $((\OM(\epsilon),\OM(\kappa^{2.5}\bar{\ell}^{0.5}\epsilon^{0.5}))$-SOSP of $\varphi(\cdot)$ with probability at least $1 - \delta$. In addition, the total number of $K^1_t$, $K^2_t$ can be bounded by
$  \sum_{t=0}^{T-1}(K^1_t+K^2_t) \leq \tilde{\OM}(\kappa^3\bar{\ell}^{0.5}\epsilon^{-1.5}).$
The complexities of the gradient calls and Hessian-vector product calls can be bounded by $\tilde{\OM}(\kappa^3\bar{\ell}^{0.5}\epsilon^{-1.5})$ and ${\OM}(\kappa^{3.5}\bar{\ell}\epsilon^{-2})$, respectively.
\end{theorem}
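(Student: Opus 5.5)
The plan is to mirror the proof of Theorem~\ref{thm:FSBA}, with Lemma~\ref{lem:IFSBA_convergence} and Lemma~\ref{lem:IFSBA_inner_complexity} playing the roles that Lemma~\ref{lem:inexact-cubic} and Lemma~\ref{lem:exact_enough} played there.

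\textbf{Step 1: accuracy conditions.} Lemma~\ref{lem:IFSBA_inner_complexity} guarantees that, with the stated choices of $K_t^1,K_t^2,K_1',K_2'$ and $\lambda$, condition \eqref{eq:inexact-cubic-iteration-cheby} holds at every iterate. Its proof combines three ingredients:
\begin{itemize}
\item the induction showing $\|\fy_t-\fy_\lambda^*(\fx_t)\|\le\tilde\epsilon$ and $\|\fw_t-\fy^*(\fx_t)\|\le\tilde\epsilon$, exactly as in the proof of Lemma~\ref{lem:exact_enough};
\item the error bound of Lemma~\ref{lemma:cheyb_error_bound};
\item the choice of $K_1'$ and $K_2'$, which makes the Chebyshev tail terms at most $\tilde C_H\sqrt{M\epsilon}/4$.
\end{itemize}

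\textbf{Step 2: second-order stationarity of $\fL_\lambda^*$.} Lemma~\ref{lem:IFSBA_convergence} then yields, with probability at least $1-\delta$ and within $T=\OM(\Delta_\lambda\sqrt{M}\epsilon^{-1.5})$ iterations, an $(\epsilon,\OM(\sqrt{M\epsilon}))$-SOSP of $\fL_\lambda^*$. Here $\Delta_\lambda=\fL_\lambda^*(\fx_0)-\min_{\fx}\fL_\lambda^*(\fx)$.

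\textbf{Step 3: transfer to $\varphi$.} Using Proposition~\ref{prop:proxy_property}(a) with $\lambda\ge\bar\ell\kappa^3/\epsilon$, $\lambda\ge\bar\ell\kappa^5/\sqrt{M\epsilon}$ and $\lambda\ge\bar\ell\kappa^2/\Delta$, I obtain three facts:
\begin{itemize}
\item $\|\nabla\varphi(\hat\fx)\|=\OM(\epsilon)$;
\item $\lambda_{\min}(\nabla^2\varphi(\hat\fx))\ge-\OM(\sqrt{M\epsilon})$;
\item $\Delta_\lambda=\OM(\Delta)$.
\end{itemize}
Since $M=\Theta(\bar\rho)=\Theta(\bar\ell\kappa^5)$, we have $\sqrt{M\epsilon}=\OM(\kappa^{2.5}\bar\ell^{0.5}\epsilon^{0.5})$ and $T=\OM(\kappa^{2.5}\bar\ell^{0.5}\epsilon^{-1.5})$.

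\textbf{Step 4: first-order (gradient) count.} I bound $\sum_t(K_t^1+K_t^2)$ by the same log/AM--GM computation as in the proof of Theorem~\ref{thm:FSBA}. This requires a bound $\sum_t\|\fs_t\|^3\lesssim\Delta/M$. For inexact steps, I would extract this from the descent inequality inside the proof of Theorem 3 of \citet{Luo22finding}, which underlies Lemma~\ref{lem:IFSBA_convergence}. That inequality gives $\fL_\lambda^*(\fx_t)-\fL_\lambda^*(\fx_{t+1})\gtrsim M\|\fs_t\|^3$ up to terms absorbed by $\epsilon^{1.5}/\sqrt M$. Any such additive terms sum to $\OM(T\epsilon^{1.5}/\sqrt M)=\OM(\Delta)$, so they only enter inside the logarithm. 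The result is $\tilde\OM(\sqrt\kappa\,T)=\tilde\OM(\kappa^3\bar\ell^{0.5}\epsilon^{-1.5})$. Each outer step also costs $\OM(1)$ gradient evaluations for $\g_t$, which is negligible.

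\textbf{Step 5: Hessian-vector product count.} This is the main obstacle. Each gradient evaluation of the cubic model $m_t$ requires one product $\C_t\fs$. Evaluating $\C_t\fs$ via the Chebyshev recursion takes $\OM(K_1'+K_2')=\tilde\OM(\sqrt\kappa)$ HVPs, plus a constant number of cross-Hessian HVPs.

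For Cubic-Solver, I would take $L=\OM(\bar\ell\kappa^3)$ from Proposition~\ref{prop:proxy_property}(b). Then $\mathcal K(\epsilon,\delta')=\tilde\OM(L/\sqrt{M\epsilon})=\tilde\OM(\kappa^{0.5}\bar\ell^{0.5}\epsilon^{-0.5})$. Multiplying by $T$ and by the $\sqrt\kappa$ Chebyshev cost gives $T\cdot\mathcal K\cdot\sqrt\kappa=\tilde\OM(\kappa^{2.5}\bar\ell^{0.5}\epsilon^{-1.5}\cdot\kappa\bar\ell^{0.5}\epsilon^{-0.5})=\tilde\OM(\kappa^{3.5}\bar\ell\epsilon^{-2})$.

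Final-Cubic-Solver is called only once. I would cite the gradient-descent complexity for cubic models from Carmon--Duchi, as used in \citet{Luo22finding}, to show its iteration count is $\tilde\OM(L/\sqrt{M\epsilon})$, which is dominated by the above.

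The delicate points are confirming that the $\log$ factors (including $\log(\sqrt{d_x}T/\delta)$) are absorbed into $\tilde\OM$, and checking the exact power of $\kappa$. I expect the stated $\OM(\kappa^{3.5}\bar\ell\epsilon^{-2})$ to hide these logarithmic factors.
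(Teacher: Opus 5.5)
Your proposal is correct and follows the paper's proof essentially step for step. Lemmas~\ref{lem:IFSBA_convergence} and \ref{lem:IFSBA_inner_complexity} give the SOSP of $\fL_\lambda^*$, Proposition~\ref{prop:proxy_property} transfers it to $\varphi$, the AGD count reuses the log/AM--GM bound with $\sum_t\|\fs_t\|^3=\OM(\Delta/M)$ taken from the descent lemma (Lemma 16) of \citet{Luo22finding}, and the HVP count is $T\cdot\mathcal{K}(\epsilon,\delta')\cdot(K_1'+K_2')=\tilde{\OM}(\kappa^{3.5}\bar{\ell}\epsilon^{-2})$. The deviations are minor: for Final-Cubic-Solver the paper cites Lemma 8 of \citet{tripuraneni2018stochastic} for an $\OM(L^2/(M\epsilon))$ iteration bound instead of your $\tilde{\OM}(L/\sqrt{M\epsilon})$, and both are non-leading; you are also right that the paper's own computation gives $\tilde{\OM}$ rather than $\OM$ for the HVP count.
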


\subsection{The Proof of Lemma~\ref{lemma:cheyb_error_bound}}
\begin{proof}
Recalling that $\mu \I \preceq \nabla_{yy}^2 g(\boldsymbol{x}_t, \boldsymbol{y}_t) \preceq \ell \I$, we estimate the inverse of the Hessian with respect to $\fy$ as
\begin{equation*}
\left( \frac{1}{2\ell} \nabla_{yy}^2 g(\boldsymbol{x}_t, \boldsymbol{y}_t) \right)^{-1}
\approx 
\frac{c_{1,0}}{2} \I + \sum_{k=1}^{K_{1}'} c_{1,k} \T_k(\Z_{1,t}).  
\end{equation*}

Lemma~\ref{lemma:cheby_approxiamte} implies
\begin{equation*}
\left\| 
\left( \frac{1}{2\ell} \nabla_{yy}^2 g(\boldsymbol{x}_t, \boldsymbol{y}_t) \right)^{-1}
- 
\left( \frac{c_{1,0}}{2} \I + \sum_{k=1}^{K_{1}'} c_{1,k} \T_k(\Z_{1,t}) \right)
\right\|
\leq 
2(\kappa - \sqrt{\kappa}) \left( 1 - \frac{2}{\sqrt{\kappa} + 1} \right)^{K_{1}'}.
\end{equation*}

Hence, we have
\begin{align*}
&\left\| 
\nabla_{xy}^2 g(\boldsymbol{x}_t, \boldsymbol{y}_t) \left( \nabla_{yy}^2 g(\boldsymbol{x}_t, \boldsymbol{y}_t) \right)^{-1} \nabla_{yx}^2 g(\boldsymbol{x}_t, \boldsymbol{y}_t)
- 
\nabla_{xy}^2 g(\boldsymbol{x}_t, \boldsymbol{y}_t) \boldsymbol{C}_{1,t} \nabla_{yx}^2 g(\boldsymbol{x}_t, \boldsymbol{y}_t)
\right\| \\
&\quad \leq 
\left\| \nabla_{xy}^2 g(\boldsymbol{x}_t, \boldsymbol{y}_t) \right\|^2
\left\| 
\left( \nabla_{yy}^2 g(\boldsymbol{x}_t, \boldsymbol{y}_t) \right)^{-1}
- 
\left( \frac{c_{1,0}}{4\ell} \I + \frac{1}{2\ell} \sum_{k=1}^{K_{1}'} c_{1,k} \T_k(\Z_{1,t}) \right)
\right\| \\
&\quad \leq 
\ell  (\kappa - \sqrt{\kappa}) \left( 1 - \frac{2}{\sqrt{\kappa} + 1} \right)^{K_{1}'} \\
&\quad \leq 
\kappa \ell \left( 1 - \frac{2}{\sqrt{\kappa} + 1} \right)^{K_{1}'}.
\end{align*}

Similarily, we have $\frac{\lambda \mu}{2} \I\preceq \nabla_{y y}^2 \mathcal{L}_\lambda\left(\boldsymbol{x}, \boldsymbol{y}\right) \preceq (1+\lambda)\ell\I$. We estimate the inverse of the Hessian with respect to $\fy$ as 
\begin{equation*}
\left[\frac{1}{2(\lambda+1)\ell}\nabla_{y y}^2 \mathcal{L}_\lambda\left(\boldsymbol{x}_t, \boldsymbol{y}_t\right)\right]^{-1}
\approx 
\frac{c_{2,0}}{2} \I + \sum_{k=1}^{K_{2}'} c_{2,k} \T_k(\Z_{2,t}).   
\end{equation*}

Lemma~\ref{lemma:cheby_approxiamte} implies
{
\begin{equation*}
\left\| \left[\frac{1}{2(\lambda+1)\ell}\nabla_{y y}^2 \mathcal{L}_\lambda\left(\boldsymbol{x}_t, \boldsymbol{y}_t\right)\right]^{-1}
- 
\left( \frac{c_{2,0}}{2} \I + \sum_{k=1}^{K_{2}'} c_{2,k} \T_k(\boldsymbol{Z}_{2,t}) \right)
\right\|
\leq 
2(3\kappa - \sqrt{3\kappa}) \left( 1 - \frac{2}{\sqrt{3\kappa} + 1} \right)^{K_{2}'}.   
\end{equation*}
}

Hence, we have

\begin{align*}
&\left\| 
\nabla_{x y}^2 \mathcal{L}_\lambda\left(\boldsymbol{x}_t, \boldsymbol{y}_t\right)\left[\nabla_{y y}^2 \mathcal{L}_\lambda\left(\boldsymbol{x}_t, \boldsymbol{y}_t\right)\right]^{-1}\nabla_{y x}^2 \mathcal{L}_\lambda\left(\boldsymbol{x}_t, \boldsymbol{y}_t\right)
- 
\nabla_{x y}^2 \mathcal{L}_\lambda\left(\boldsymbol{x}_t, \boldsymbol{y}_t\right) \boldsymbol{C}_{2,t} \nabla_{y x}^2 \mathcal{L}_\lambda\left(\boldsymbol{x}_t, \boldsymbol{y}_t\right)
\right\| \\
& \leq 
\left\| \nabla_{x y}^2 \mathcal{L}_\lambda\left(\boldsymbol{x}_t, \boldsymbol{y}_t\right)\right\|^2
\left\| 
\left( \nabla_{y y}^2 \mathcal{L}_\lambda\left(\boldsymbol{x}_t, \boldsymbol{y}_t\right) \right)^{-1}
- 
\left( \frac{c_{2,0}}{4(\lambda+1)\ell} \I + \frac{1}{2(\lambda+1)\ell} \sum_{k=1}^{K_{2}'} c_{2,k} \T_k(\Z_{2,t}) \right)
\right\| \\
& \leq 2(\lambda+1)
\ell  (3\kappa - \sqrt{3\kappa})\left( 1 - \frac{2}{\sqrt{3\kappa} + 1} \right)^{K_{2}'} \\
&\leq 6
(\lambda+1)\kappa \ell \left( 1 - \frac{2}{\sqrt{3\kappa} + 1} \right)^{K_{2}'}.
\end{align*}

Then, we have
\begin{equation*}
\left\|\H(\boldsymbol{x}_t;\boldsymbol{y}_t,\boldsymbol{w}_t)-\C(\boldsymbol{x}_t;\boldsymbol{y}_t,\boldsymbol{w}_t)\right\| \leq \kappa \ell \left( 1 - \frac{2}{\sqrt{\kappa} + 1} \right)^{K_{1}'}+
6
(\lambda+1)\kappa \ell \left( 1 - \frac{2}{\sqrt{3\kappa} + 1} \right)^{K_{2}'}  .
\end{equation*}

According to
$\left\|{\nabla}^2 \mathcal{L}_\lambda^*\left(\boldsymbol{x_t}\right)-\H(\boldsymbol{x}_t;\boldsymbol{y}_t,\boldsymbol{w}_t)\right\| \leq C_1 \left\|\boldsymbol{w}_t-\boldsymbol{y}^*\left(\boldsymbol{x}_t\right) \right\| + C_2
 \left\|\boldsymbol{y}_t -\boldsymbol{y}_\lambda^*\left(\boldsymbol{x}_t\right)\right\|$ for $C_1 := \mathcal{\OM}\left(\lambda \bar{\ell} +\bar{\ell} \kappa^2\right)$ and $C_2 := \mathcal{\OM}\left( \lambda\bar{\ell} \kappa^2 \right)$.
We obtain $\boldsymbol{w}_t \approx \fy^*(\boldsymbol{x}_t)$ and $\boldsymbol{y}_t \approx \fy_{\lambda}^*(\boldsymbol{x}_t)$ by AGD. Then we can bound the approximation error of $\C(\boldsymbol{x}_t;\boldsymbol{y}_t,\boldsymbol{w}_t)$ as follows:
\begin{align*}
&\left\|{\nabla}^2 \mathcal{L}_\lambda^*\left(\boldsymbol{x_t}\right)-\C(\boldsymbol{x}_t;\boldsymbol{y}_t,\boldsymbol{w}_t)\right\|\\
&\leq 
\left\| 
{\nabla}^2 \mathcal{L}_\lambda^*\left(\boldsymbol{x_t}\right)-\H(\boldsymbol{x}_t;\boldsymbol{y}_t,\boldsymbol{w}_t)
\right\| 
+
\left\| 
\H(\boldsymbol{x}_t;\boldsymbol{y}_t,\boldsymbol{w}_t)-\C(\boldsymbol{x}_t;\boldsymbol{y}_t,\boldsymbol{w}_t)
\right\| \\
&\leq 
C_1 \left\|\boldsymbol{w}_t-\y^*\left(\boldsymbol{x}_t\right) \right\| + C_2
 \left\|\boldsymbol{y}_t -\boldsymbol{y}_\lambda^*\left(\boldsymbol{x}_t\right)\right\|
+ \kappa \ell \left( 1 - \frac{2}{\sqrt{\kappa} + 1} \right)^{K_{1}'}+
6
(\lambda+1)\kappa \ell \left( 1 - \frac{2}{\sqrt{3\kappa} + 1} \right)^{K_{2}'} .
\end{align*}
\end{proof}

\subsection{The Proof of Lemma
~\ref{lem:IFSBA_inner_complexity}}
\begin{proof}
Since FSBA and IFSBA share the same procedure and analysis in the AGD part, the following result can be derived in the same manner as Lemma~\ref{lem:exact_enough}:
\begin{equation}
\label{eq:error_y_w_1}
\left\|\boldsymbol{y}_t-\fy_\lambda^*\left(\boldsymbol{x}_t\right)\right\| \leq \tilde{\epsilon}, \left\|\boldsymbol{w}_t-\fy^*\left(\boldsymbol{x}_t\right)\right\| \leq \tilde{\epsilon}
\end{equation} 
holds for any $t \geq 0$. 
Combining inequality \eqref{eq:error_y_w_1} with Lemma \ref{lemma:cheyb_error_bound}, together with the definition of $K_1'$ and $K_2'$, we obtain
\begin{align*}
\left\|\nabla \mathcal{L}_\lambda^*\left(\boldsymbol{x}_t\right)-\g(\fx_t;\fy_t,\fw_t)\right\|
&\leq 2 \lambda \ell \left\|\boldsymbol{y}_t - \fy_\lambda^*\left(\boldsymbol{x}_t\right)\right\|
+ \lambda \ell\left\|\boldsymbol{w}_t - \fy^*\left(\boldsymbol{x}_t\right)\right\| \leq \tilde{C}_g \epsilon, \\
\left\|\nabla^2 \mathcal{L}_\lambda^*(\boldsymbol{x}_t)-\C(\fx_t;\fy_t,\fw_t)\right\|
&\leq C_1 \left\|\boldsymbol{w}_t-\fy^*\left(\boldsymbol{x}_t\right) \right\| + C_2
 \left\|\boldsymbol{y}_t-\boldsymbol{y}_\lambda^*\left(\boldsymbol{x}_t\right)\right\|\\
&+ \kappa \ell \left( 1 - \frac{2}{\sqrt{\kappa} + 1} \right)^{K_{1}'}+
6
(\lambda+1)\kappa \ell \left( 1 - \frac{2}{\sqrt{3\kappa} + 1} \right)^{K_{2}'} \\
&\leq \min\{ \tilde{C}_H  \sqrt{M\epsilon}, \epsilon_{\mathrm{H}}L \}.
\end{align*}
From the above results, it can be observed that the condition \eqref{eq:inexact-cubic-iteration-cheby} in Lemma~\ref{lem:IFSBA_convergence} is satisfied. 
\end{proof}

\subsection{The Proof of Theorem~\ref{thm:imcn_main_complexity}}
\begin{proof}
Let $M=\Omega( \bar{\rho})$,
$T = \Theta\left((\varphi(\fx_0)-\varphi^*)\sqrt{M}\epsilon^{-3/2}\right)$ and the setting of $\lambda$,
then we can prove that the output
$\hat{\fx}$ of Algorithm~\ref{alg:IFSBA} is an $\left((\OM(\epsilon),\OM(\kappa^{2.5}\bar{\ell}^{0.5}\epsilon^{0.5})\right)$-SOSP of $\varphi(\cdot)$.

Since the algorithm~\ref{alg:IFSBA} could find an $(\epsilon, \sqrt{M \epsilon})$-SOSP of $\mathcal{L}_\lambda^*(\boldsymbol{x})$ in Lemma~\ref{lem:IFSBA_convergence},
then we have
\begin{equation*}
\left\|\nabla \mathcal{L}_\lambda^*\left(\boldsymbol{x}\right)\right\| \leq \epsilon, \quad
\nabla^2 \mathcal{L}_\lambda^*\left(\boldsymbol{x}\right) \succeq -\sqrt{M \epsilon} I. 
\end{equation*}

Following the analysis in Theorem~\ref{thm:FSBA} with the setting of $\lambda$ and Proposition~\ref{prop:proxy_property}
, we can show the following results:
\begin{equation*}
\left\|\nabla \varphi(\boldsymbol{x})\right\|
\le \mathcal{O}(\epsilon), \quad
\nabla^2 \varphi(\boldsymbol{x})
\succeq -\mathcal{O}(\sqrt{M\epsilon})\I, \quad
\mathcal{L}_\lambda^*(\boldsymbol{x}_0)
- \min_{\boldsymbol{x}} \mathcal{L}_\lambda^*(\boldsymbol{x})
= \mathcal{O}(\Delta).
\end{equation*}

Since FSBA and IFSBA share the same structure in the AGD component, we can analogously to Lemma~\ref{lem:exact_enough} establish the corresponding first-order oracle complexity:
{
\begin{align*}
& \sum_{t=0}^{T-1}(K^1_t+K^2_t) \\
= & \frac{4 \sqrt{3\kappa} T}{3}\left[\frac{3}{T} \log \left(\frac{\sqrt{3\kappa+1}}{\tilde{\epsilon}}R\right)+\log \left(8(3\kappa+1)^{1.5}+\frac{8 (4\kappa)^3(3\kappa+1)^{1.5}}{T \tilde{\epsilon}^3} \sum_{t=1}^T\left\|\boldsymbol{s}_{t-1}\right\|^3\right)\right]+2T.  
\end{align*}
}

Our Lemma~\ref{lem:IFSBA_convergence} corresponds to Theorem 3 in \citet{Luo22finding}. Under the same setting and within the proof of that theorem, the following lemma (Lemma 16 in in \citet{Luo22finding}) was employed:

Under the setting of Lemma~\ref{lem:IFSBA_convergence}, if it satisfies 
$\Delta_t \le -\frac{1}{128}\sqrt{\frac{\epsilon^3}{M}}$,
then we have
\begin{equation}
\frac{M}{256}\|\boldsymbol{s}_t\|^3 \le \mathcal{L}_\lambda^*\left(\boldsymbol{x}_t\right) - \mathcal{L}_\lambda^*\left(\boldsymbol{x}_t+\boldsymbol{s}_t\right) - \frac{1}{626}\sqrt{\frac{\epsilon^3}{M}},   
\end{equation}
with probability at least $1 - \delta'$.

Based on the above lemma, we conclude the total number of AGD calls is at most
{
\begin{align*}
&\sum_{t=0}^{T-1}(K^1_t+K^2_t) \\
& \leq \frac{2 \sqrt{3\kappa} T}{3} \left( \frac{3}{T} \log\left(\frac{\sqrt{3\kappa+1}}{\tilde{\epsilon}}R\right) \right)  +\frac{2 \sqrt{3\kappa} T}{3}\log \left(8(3\kappa+1)^{1.5}+\frac{2048 (4\kappa)^3(3\kappa+1)^{1.5}}{T M \tilde{\epsilon}^{ 3}}\Delta\right) + 2T\\
& =\tilde{\mathcal{O}}\left(\sqrt{\kappa M} \epsilon^{-1.5}\right) =
\tilde{O}\left( \kappa^3 \sqrt{\bar{\ell}} \, \epsilon^{-1.5} \right).
\end{align*}
}

The total number of Hessian-vector calls from Algorithm~\ref{alg:IFSBA} is at most
\begin{align*}
T \cdot \mathcal{K}(\epsilon, \delta') \cdot( K_1'+K_2') 
&\leq
\tilde{O}\left( \kappa^{2.5} \sqrt{\bar{\ell}} \epsilon^{-1.5} \right) \cdot \tilde{O}\left( \frac{L }{\sqrt{M\epsilon}} \right)\cdot \tilde{O}(\sqrt{\kappa}) \\
&\leq \tilde{O}\left( \kappa^{2.5} \sqrt{\bar{\ell}} \epsilon^{-1.5} \right) \cdot\tilde{O}\left( \sqrt{\frac{\bar{\ell}\kappa}{\epsilon}} \right) \cdot \tilde{O}(\sqrt{\kappa})\\
& \leq \tilde{O}\left( \bar{\ell}\kappa^{3.5} \epsilon^{-2} \right)
\end{align*}
Using Lemma~8 of \citet{tripuraneni2018stochastic}, we know the total number of Hessian-vector calls from
Algorithm~\ref{alg:final_cubic} is at most
\begin{equation*}
\tilde{\mathcal{O}}(\sqrt{\kappa}) \cdot \mathcal{O}(\frac{L^2}{M\epsilon} )
= \tilde{\mathcal{O}}\!\left(\frac{\bar{\ell} \kappa}{\epsilon}\right),   
\end{equation*}
which is not the leading term in total complexity for small $\epsilon$.
\end{proof}

\section{Experiment Details}
\label{sec:exp_detail}
We present the additional experiment details in this section.

\paragraph{Code Availability.}
The code is available at \url{https://github.com/silas-yang9/FSBA}.

\subsection{Experimental Setup for Section~\ref{sec:exp}}
Our experiments are carried out on a server equipped with an Intel Xeon Platinum 8352V CPU @ 2.10GHz, featuring 16 vCPUs and 120GB of memory. The GPU used is an NVIDIA RTX 4090 (24GB VRAM). We implement the algorithms using PyTorch 2.5.1 and Python 3.12, with GPU acceleration supported by CUDA 12.4. The operating system is Ubuntu 22.04.

\subsection{Experiment Details in Section~\ref{sec:syn}}
\label{app:expminimax}
The $w(\cdot)$ in the objective function $f(\x,\y)$ is defined to be
\begin{align*}
    w(x)= \begin{cases}\sqrt{\epsilon}(x+(L+1) \sqrt{\epsilon})^2-\frac{1}{3}(x+(L+1) \sqrt{\epsilon})^3-\frac{1}{3}(3 L+1) \epsilon^{3 / 2}, & x \leq-L \sqrt{\epsilon} ; \\ \epsilon x+\frac{\epsilon^{3 / 2}}{3}, & -L \sqrt{\epsilon}<x \leq-\sqrt{\epsilon} ; \\ -\sqrt{\epsilon} x^2-\frac{x^3}{3}, & -\sqrt{\epsilon}<x \leq 0 ; \\ -\sqrt{\epsilon} x^2+\frac{x^3}{3}, & 0<x \leq \sqrt{\epsilon} ; \\ -\epsilon x+\frac{\epsilon^{3 / 2}}{3}, & \sqrt{\epsilon}<x \leq L \sqrt{\epsilon} ; \\ \sqrt{\epsilon}(x-(L+1) \sqrt{\epsilon})^2+\frac{1}{3}(x-(L+1) \sqrt{\epsilon})^3-\frac{1}{3}(3 L+1) \epsilon^{3 / 2}, & L \sqrt{\epsilon}<x ;\end{cases} .
\end{align*}

The hyperparameters in all methods are tuned as follows.
We perform a grid search to tune the learning rates for the AGD steps, GDA, and the outer loop of PRAGDA from $\left\{c \times 10^i: c \in\{1,5\}, i \in\{1,2,3\}\right\}$. 
The momentum parameter is selected from $\{c \times 0.1: c \in\{1,2,3,4,5,6,7,8,9\}\}$. 
For LMCN, the frequency of Hessian updates $m$ is chosen from $\{1,2,3,4,5,6,7,8,9,10\}$. For LMCN, iMCN and MCN, we set the cubic regularized Newton constant $M=5$.

\begin{figure}[htbp]
    \centering
    
    \setlength{\tabcolsep}{1pt} 
    
    \begin{tabular}{cc}
        
        \includegraphics[width=0.38\linewidth]{figures/mnist-25.png} &  
        
        \includegraphics[width=0.38\linewidth]{figures/mnist-50.png} \\

        \scriptsize MNIST $p=0.25$ & 
        \scriptsize MNIST $p=0.5$
    \end{tabular}

    \caption{Comparison of various bilevel algorithms for data hypercleaning at different noise rate $p$ on "MNIST" datasets.}
    \label{fig:MNIST}
\end{figure}

\subsection{Experiment Details in Section~\ref{sec:hypercleaning}}
\label{app:expdatacleaning}
{We tune the inner-loop and outer-loop learning rate of all methods from $\left\{10^{-3}, 10^{-2}, 10^{-1}, 1, 10^{1}, 10^{2}, 10^{3}\right\}$, the iteration numbers of GD or AGD step from $\{5, 10, 30, 50\}$, and the iteration number of CG step from $\{5, 10, 30, 50\}$.
For F$^2$BA and LFSBA, we additionally tune the multiplier $\lambda$ in $\left\{1,10^1, 10^2, 10^3\right\}$. 
For LFSBA, we tune $M$ from $\left\{1, 10^1, 10^2, 10^3\right\}$ and $m$ from $\{1,5,10,100\}$. 
}

We also compare IFSBA method (Algorithm~\ref{alg:IFSBA}) with baseline methods, including ITD, AID with conjugate gradient, and F$^2$BA on "MNIST" datasets~\cite{lecun2002gradient}. We report the results on $\fD_{tr}$ with different noise rates $p  = 25\%$ and $p = 50\%$ in Figure~\ref{fig:MNIST}.

We follow the setting in Section~\ref{sec:hypercleaning} for choosing the hyperparameter. IFSBA requires additional tuning of the Cubic-Solver iterations and Matrix Chebyshev Polynomials, where the iteration steps are chosen from $\{1, 5, 10, 100\}$.

\section{Ablation Studies}

\subsection{Ablation Studies on $m$}

Our theoretical analysis suggests setting 
$m = \Theta\!\left(1 + \frac{d}{\sqrt{\kappa}}\right)$ to balance iteration complexity and per-iteration computational cost. 
To validate this choice, we conduct ablation studies on $m$ for both synthetic and data-cleaning problems. 
As shown in Figure~\ref{fig:ablation_m}, there is a clear trade-off in the choice of $m$. 
As suggested by Theorem~\ref{thm:Lazy_Hessian_calls}, the choice of $m$ introduces a trade-off between iteration complexity and Hessian-related computation. 
Smaller $m$ improves the iteration complexity due to the $m^{1/2}$ factor, whereas larger $m$ reduces the cost of Hessian updates, as captured by the $m^{-1/2}$ factor in the second-order oracle term.
In practice, moderately large values of $m$ achieve the best overall performance in terms of running time.

\begin{figure}[t]
    \centering
    \setlength{\tabcolsep}{4pt}
    \begin{tabular}{cc}
        \includegraphics[width=0.38\linewidth]{figures/LMCN_m_ablation.pdf} &
        \includegraphics[width=0.38\linewidth]{figures/australian_ablation_m.png} \\
        \scriptsize (a) Synthetic problem &
        \scriptsize (b) Data-cleaning problem on Austrilian dataset
    \end{tabular}
    \caption{Ablation study on the Hessian update frequency $m$.}
    \label{fig:ablation_m}
\end{figure}

\subsection{Ablation Studies on $\lambda$ and $M$}

The exact values of theoretical constants, such as the smoothness parameter $\ell$, the Hessian Lipschitz parameter $\rho$, and the condition number $\kappa$, are highly problem-dependent. 
For example, in the data hypercleaning task, assuming the logit loss is bounded by $B=\max_{i,y} L(\langle \boldsymbol{a}_i,\fy\rangle,\fy)$, the second-order derivatives of $g$ can be bounded as 
$\|\nabla_{xx}^2 g(\fx,\fy)\| \leq \frac{1}{|D_{\mathrm{tr}}|}\max_i |\sigma''(\fx_i)|L_i(\fy)=\mathcal{O}(B/|D_{\mathrm{tr}}|)$, 
$\|\nabla_{xy}^2 g(\fx,\fy)\| \leq \frac{1}{4|D_{\mathrm{tr}}|}\sqrt{\sum_{i\in D_{\mathrm{tr}}}\|\boldsymbol{a}_i\|^2}$, and 
$\|\nabla_{yy}^2 g(\fx,\fy)\| \leq \frac{1}{4|D_{\mathrm{tr}}|}\lambda_{\max}\!\left(\sum_{i\in D_{\mathrm{tr}}}\boldsymbol{a}_i\boldsymbol{a}_i^\top\right)+\mu$. 
These bounds provide estimates of the smoothness constant of $g$, and other problem-dependent constants can be bounded in a similar way. 
Nevertheless, computing tight global constants is often conservative and impractical. 
Therefore, in our implementation, we do not rely on strict worst-case bounds; instead, we treat the induced algorithmic parameters, including the cubic regularization parameter $M$ and the penalty multiplier $\lambda$, as tunable hyperparameters.

To examine the sensitivity of FSBA/LFSBA to these choices, we conduct additional ablation studies on the Australian dataset for the data hypercleaning task. 
Figures~\ref{fig:ablation_lambda} and~\ref{fig:ablation_M} report the results for different choices of $\lambda$ and $M$, respectively, while keeping the other hyperparameters fixed. 
We observe that FSBA/LFSBA consistently converges and achieves a small optimality gap even when these hyperparameters are varied over a wide range. 
Since $\lambda$ and $M$ are theoretically tied to problem-dependent constants, e.g., $M \geq c\rho$, this empirical stability demonstrates that our method is robust to the choice of these parameters in practice.

\begin{figure}[t]
    \centering
    \setlength{\tabcolsep}{3pt}
    \begin{tabular}{ccc}
        \includegraphics[width=0.32\linewidth]{figures/australian_lambda_1.png} &
        \includegraphics[width=0.32\linewidth]{figures/australian_lambda_2.png} &
        \includegraphics[width=0.32\linewidth]{figures/australian_lambda_3.png} \\
        \scriptsize (a) $\lambda=1$ &
        \scriptsize (b) $\lambda=10$ &
        \scriptsize (c) $\lambda=100$
    \end{tabular}
    \caption{Ablation study on the penalty multiplier $\lambda$ for the data-cleaning task.}
    \label{fig:ablation_lambda}
\end{figure}

\begin{figure}[t]
    \centering
    \setlength{\tabcolsep}{3pt}
    \begin{tabular}{ccc}
        \includegraphics[width=0.32\linewidth]{figures/australian_M_1.png} &
        \includegraphics[width=0.32\linewidth]{figures/australian_M_2.png} &
        \includegraphics[width=0.32\linewidth]{figures/australian_M_3.png} \\
        \scriptsize (a) $M=1$ &
        \scriptsize (b) $M=5$ &
        \scriptsize (c) $M=10$
    \end{tabular}
    \caption{Ablation study on the cubic regularization parameter $M$ for the data-cleaning task.}
    \label{fig:ablation_M}
\end{figure}


\end{document}